\newcommand{\R}{{\mathbb R}}
\newcommand{\N}{{\mathbb N}}
\newcommand{\C}{{\mathbb C}}
\newcommand{\be}{\begin{eqnarray}}
\newcommand{\ben}{\begin{eqnarray*}}
\newcommand{\en}{\end{eqnarray}}
\newcommand{\enn}{\end{eqnarray*}}
\newcommand{\pa}{\partial}
\newcommand{\real}{{\rm Re\,}}
\newcommand{\ima}{{\rm Im\,}}
\newcommand{\s}{\mathbb{S}}
\newcommand{\G}{\Gamma}
\newcommand{\range}{{\rm Range}}
\newtheorem{theorem}{Theorem}[section]
\newtheorem{thm}{Theorem}[section]
\newtheorem{cor}{Corollary}[section]
\newtheorem{lem}{Lemma}[section]
\newtheorem{rem}{Remark}[section]
\definecolor{rot}{rgb}{0,0,0}
\definecolor{hw}{rgb}{0,0,0}
\definecolor{rot1}{rgb}{0,0,0}
\newcommand{\tcr}{\textcolor{rot}}
\newcommand{\rot}{\textcolor{rot1}}
\newcommand{\tcb}{\textcolor{hw}}
\begin{document}
\renewcommand{\theequation}{\arabic{section}.\arabic{equation}}
\begin{titlepage}
\title{\bf Inverse wave-number-dependent source problems for the Helmholtz equation}

\author{%Roland Griesmarier\thanks{Institute for Applied and Numerical Mathematics, Karlsruhe Institute of Technology, Germany. ({\tt roland.griesmaier@kit.edu})} \and
Hongxia Guo\thanks{School of Mathematical Sciences and LEMP, Nankai University, 300071 Tianjin, China. ({\tt hxguo@nankai.edu.cn})}  \and Guanghui Hu\thanks{Corresponding author: School of Mathematical Sciences and LEMP, Nankai University, 300071 Tianjin, China. ({\tt ghhu@nankai.edu.cn})}
}
\date{}
\end{titlepage}
\maketitle

%\vspace{.2in}

\begin{abstract}
This paper is concerned with the multi-frequency factorization method for imaging the support of a wave-number-dependent source function. It is supposed that the source function is given by the \textcolor{rot}{inverse} Fourier transform of some time-dependent source with a priori given radiating period. Using the multi-frequency far-field data at a fixed observation direction, we provide a computational criterion for characterizing the smallest strip containing the support and perpendicular to the observation direction. The far-field data from sparse observation directions can be used to recover a $\Theta$-convex polygon of the support. %Uniqueness in recovering the convex hull of the support is obtained as a by-product of the reconstruction method with the  data of all observation directions.
The inversion algorithm is proven valid even with multi-frequency near-field data in three dimensions. The connections to time-dependent inverse source problems are discussed in the near-field case. %We also comment on possible extensions to source functions with two disconnected supports.
Numerical tests in both two and three dimensions are implemented to  show effectiveness and feasibility of the approach. This paper provides numerical analysis for a frequency-domain approach to recover the support of an admissible class of time-dependent sources.

\vspace{.2in} {\bf Keywords: Inverse source problem, Helmholtz equation, wave-number-dependent sources, multi-frequency data, factorization method.
}
\end{abstract}

\section{Introduction and problem formulation}
Consider the time-dependent acoustic wave radiating from a source term in an isotropic and  homogeneous medium
\be\label{wave}\begin{array}{lll}
\partial_t^2U(x,t)=\Delta U(x,t)\tcr{+}S(x,t),&&\quad (x,t)\in \R^3\times \R_+,\\
U(x,0)=\partial_tU(x,0)=0,&&\quad x\in \R^3,
\end{array}
\en
where  $\mbox{supp}\, S(x,t)=\textcolor{rot}{\overline D}\times \rot{[}t_{\min}, t_{\max}\rot{]}\subset \R^3\times\R_+$ with $t_{\max}>t_{\min}\geq 0$. The wave speed in the background medium has been normalized to be one. We suppose that $D\subset \R^3$ is a bounded Lipschitz domain such that $\R^3\backslash\overline{D}$ is connected and that $S(x,t)\in C([t_{\min}, t_{\max}], L^\infty(D))$ is a \tcb{real-valued function} fulfilling the \textcolor{rot}{positivity} constraint
\be\label{F}
S(x, t)\geq c_0>0\qquad\mbox{a.e.}\; x\in \overline{D},\quad t\in \textcolor{rot}{[}t_{\min}, t_{\max}\textcolor{rot}{]}.
\en
The above condition \eqref{F} implies that  \textcolor{rot}{the location and shape of the time-dependent source support $D$} does not vary along with the time variable.
 The time interval $[t_{\min}, t_{\max}]\subset\R_+$ represents the duration period for source radiating.
The solution $U$ can be given explicitly as the convolution of the fundamental solution to the wave equation with the source term, that is,

\be\label{U}
U(x,t)=G(x, t)*S(x,t):=\int_{\R_+}\int_{\R^3} G(x-y; t-\tau) S(y,\tau) dy d\tau
%&=&\int_{\R_+}  \int_D\frac{\delta(t-s-|x-y|)}{4\pi |x-y|} S(y,\tau) dy d\tau.
\en
where
%\ben
$G(x, t)=\frac{\delta(t-|x|)}{4\pi |x|}$ is the Green's function to the wave equation in 3D.
%,\quad x\in \R^3\backslash\{0\},\;t\in \R_+,\; t\neq |x|.
%\enn
Taking the \textcolor{rot}{inverse} Fourier transform of $U(x, t)$ with respect to the time variable, one deduces from \eqref{U} that
\be\label{expression-w}
w(x,k):=\frac{1}{\sqrt{2\pi}}\int_{\R} U(x,t)e^{\textcolor{rot}{{\rm i} k t}}dt=\int_{\R^3}\Phi_k(x, y) f(y,k)\,dy,
%w(x,k):=\frac{1}{\sqrt{2\pi}}\int_{\R} U(x,t)e^{-{\rm i} k t}dt=\frac{1}{\sqrt{2\pi}}\int_{\R_+} U(x,t)e^{-{\rm i} k t}dt=\int_{\R^3}\Phi_k(x, y) f(y,k)\,dy,
\en
where $\Phi_{k}(x,y)=\frac{e^{{\rm i}k |x-y|}}{4\pi|x-y|}$ and $f(y, k)$ denote respectively the \textcolor{rot}{inverse} Fourier transforms of the fundamental solution $G(x-y; t)$ and $S(y, t)$. %By the wave equation of the fundamental solution $G(x, t)$,
%\ben
%\partial_t^2G(x-y,t)-\Delta G(x-y,t)=\delta(x-y)\delta(t),&&\quad x\in \R^3\backslash\{y\},\quad t>0, \\
%G(x-y,0)=\partial_tG(x-y,0)=0,&&\quad x\in \R^3\backslash\{y\},
%\enn
%we conclude that $\Phi_{k}(x, y)$
%coincides with the fundamental solution to the Helmholtz equation $(\Delta+k^2)w=0$, given by
%\ben
%\Phi_{k}(x,y)=\frac{e^{{\rm i}k |x-y|}}{4\pi |x-y|},\quad x\neq y,\quad x,y\in \R^3.
%\enn
It is well known that $\Phi_{k}(x,y)$ satisfies the Sommerfeld radiation condition. By the assumption of $S$, we have
\be\label{fxt}
f(x,k)=\frac{1}{\sqrt{2\pi}}\int_{\R} S(x,t)e^{\textcolor{rot}{{\rm i} k t}}dt=\frac{1}{\sqrt{2\pi}}\int_{t_{\min}}^{t_{\max}} S(x,t)e^{\textcolor{rot}{{\rm i} k t}}dt,
\en
which is compactly supported on $\overline{D}$ with respect to the space variables. Moreover, $f(\cdot, k)\in L^2(D)$ for any $k>0$. Therefore, it follows from \eqref{expression-w} that $w(\cdot, k)\in H^2_{loc}(\R^3)$ satisfies
\be\label{Helmholtz}
&\Delta w(x, k)+k^2 w(x, k)=-f(x, k), \quad  x\in \R^3,\; k>0,\\ \label{SRC}
&\lim_{r\rightarrow\infty}r (\partial_r w-{\rm i} k  w)=0,\quad  r=|x|,
\en
where the limit \eqref{SRC} holds uniformly in all directions $\hat{x}=x/|x|\in \s^2:=\{x\in \R^3: |x|=1\}$.
The Sommerfeld radiation condition \eqref{SRC}  gives rise to the following asymptotic behavior at infinity:
\be\label{far-field}
w(x)=\frac{e^{{\rm i} k |x|}}{4\pi|x|}\left\{w^\infty(\hat{x},k)+O\left(\frac{1}{|x|}\right)\right\}\quad\mbox{as}\quad|x|\rightarrow\infty,
\en
where $w^\infty(\cdot, k)\in C^\infty(\s^2)$ is referred to as the
 far-field pattern (or scattering amplitude) of $w$. It is well known that the function  $\hat{x}\mapsto w^\infty(\hat{x}, k)$ is real analytic on $\s^2$, where $\hat{x}\in \s^2$ is usually called the observation direction.

 By \eqref{expression-w}, the far-field pattern $w^\infty$ of $w$ can be expressed as
\be\label{u-infty}
w^\infty(\hat{x}, k)=\int_{D} e^{-{\rm i}k\hat{x}\cdot y} f(y, k)\,dy,\quad \hat{x}\in \s^2,\quad k>0.
\en
\textcolor{rot}{Furthermore, substituting (\ref{fxt}) into (\ref{u-infty}),  we obtain the expression of the far-field pattern $w^{\infty}$ in the frequency domain as follow
\be
w^\infty(\hat{x}, k)=\frac{1}{\sqrt{2\pi}}\int_{t_{\min}}^{t_{\max}}\int_{D} e^{-{\rm i}k(\hat{x}\cdot y-t)} S(x,t)\,dy\,dt,\quad \hat{x}\in \s^2,\quad k>0.
\en}

Noting that the time-dependent source $S$ is real valued, we have $f(x, -k)=\overline{f(x,k)}$  and thus $w^\infty(\hat{x}, -k)=\overline{w^\infty(\hat{x}, k)}$ for all $k>0$.
Let $0\leq k_{\min}<k_{\max}$ and denote by $(k_{\min}, k_{\max})$ the bandwidth of wave-numbers of the Helmholtz equation.
In this paper we are interested in the following inverse problem: determine the position and shape of the support $D$ from knowledge of the multi-frequency far-field patterns
$\left\{w^\infty(\hat{x}_j, k): k\in(k_{\min}, k_{\max}), \, j=1,2,\cdots, J \right\}.$
%We shall establish a factorization method for imaging $D$ from sparse far-field measurements  at multiple frequencies.

If the source term is independent of frequencies (which corresponds to the critical case that $S(x,t)=s(x)\delta(t)$ and $f(x, k)=s(x)),$%; see Remark \ref{rem3.2}),
the far-field pattern given by \eqref{u-infty} is nothing  else but the Fourier transform of the space-dependent source term $f$ at the Fourier variable  $\xi=k\hat{x}\in \R^3$ multiplied by some constant.
Since $f$ is compactly supported in $D$,  its Fourier transform is analytic in $\xi\in \R^3$. Hence, the far-field measurements over an interval of frequencies and observation directions uniquely determine the source function and also its support. A wide range of literature is devoted to inverse wavenumber-independent source problems with multi-frequency data, for example,  uniqueness proofs and increasing stability analysis with near-field measurements \cite{BLLT, BLT10, CIL, EV09, LY} and a couple of numerical schemes such as iterative method, Fourier method and test-function method
for recovering the source function \cite{BLLT, BLRX,  EV09, ZG}  and sampling-type methods for imaging the support  \cite{AHLS, GS, JLZ2019}.
On the other hand,  the inverse source problem with the measurement data at a single frequency becomes severely ill-posed.
It is impossible in general to determine a source function (even its support) from a single far-field pattern due to the existence of non-radiating sources; see e.g., \cite{BC, DW1973, DS1982} for non-uniqueness examples. In a series papers by Kusiak and Sylvester \cite{KS03, Sy06,  SK05}, the concept of convex scattering support has been introduced to define the smallest convex set that carries a single far-field pattern.
It was shown in \cite{B, HL2020} that a convex-polygonal source support and an admissible class of analytic source functions can be uniquely determined by a single far-field pattern. Numerical schemes such as the enclosure method \cite{Ik99} and one-wave factorization method \cite{GH22} were proposed for imaging the support of such convex-polygonal sources. The filtered backprojection method \cite{GHT12, GHT13} and a hybrid method involving iterative and range test \cite{AKS} were also investigated with a single far-field measurement.

In contrast to vast \rot{literatures} for space-dependent source terms, little is known if the source function depends on both frequency/wave-number and spatial variables. Here we assume that the dependence on the frequency is unknown. %In one of the author's previous work \cite{AHLS}, a direct imaging method was examined for recovering the support of a wave-number-dependent source, but numerical solutions are not satisfactory in comparision with the results for wavenumber-independent sources.
One can see the essential difficulties from the expression \eqref{u-infty}, where the far-field pattern is no longer the Fourier transform of \rot{the source function}. Hence, most existing methods cannot be straightforwardly carried over to \rot{frequency-number-dependent source terms}.
In this paper, we consider an inverse frequency-dependent source problem originating from inverse time-dependent source problems. \rot{The temporal function is supposed to be unknown, but the starting and terminal time points for radiating are given.}
Consequently, the source term \rot{takes} a special integral form of the time-dependent source function (see \eqref{fxt}) with  {\rm a priori} given source radiating period $\textcolor{rot}{[}t_{\min}, t_{\max}\textcolor{rot}{]}$.  This is motivated by the Fourier method of \cite{HKLZ19, HK20, HKZ2020} for proving uniqueness in determining \rot{the source function of} inhomogeneous hyperbolic equations with vanishing initial data. In these works the inverse time-dependent problems were reduced to equivalent problems in the time-harmonic regime with multi-frequency data.
The proposed factorization scheme seems not applicable to general wave-number-dependent sources, because we do not know how to get a desirable factorization form of the far-field operator. Confined by such source functions, we \rot{think} it is non-trivial to extend our method to inverse medium scattering problems with multi-frequency data. We refer to \cite{FSY, G11, GH15, GS17, JLZ19, JLZ, P10} for the application of the sampling-type methods to nonlinear inverse problems modeled by the Helmholtz equation.

The multi-static factorization method \cite{K98, KG08}, which was proposed by A. Kirsch in 1998 , %has been extensively studied in various inverse time-harmonic scattering problems using far-field patterns over all observation directions at a fixed frequency. It
provides a necessary and sufficient criterion for precisely characterizing
the shape and location of  a scattering obstacle, utilizing the multi-static spectral system of the
far-field operator. The multi-frequency factorization method was rigorously justified in \cite{GS} for recovering the smallest strip $K_D^{(\hat{x})}$ that contains the support $D$ of a wave-number-independent source and is orthogonal to the observation direction $\hat{x}$. Moreover, the $\Theta$-convex polygon of the support can be recovered from the multi-frequency far-field data over sparse observation directions.
The aim of this paper is to establish the analogue of the multi-frequency factorization method \cite{GS} for imaging the support of a wave-number-dependent source function of the form \eqref{fxt}.  We prove a new range identity for connecting ranges of the far-field operator $F$ and the 'data-to-pattern' operator $L$. This yields a computational criterion for characterizing the $\Theta$-convex hull of $D$ using the multi-frequency far-field data over sparse observation directions; see Theorem \ref{TH-hull}. %In this sense it inherits merits of the multi-static factorization method.

If the near-field measurement data are available in three dimensions, the reconstruction scheme can be used for \rot{recovering} the minimum and maximum distance between the support and a measurement position. The connection between
the near-field factorization method  and the time-dependent wave radiating problems will be discussed in Section \ref{sec4}.
In two dimensions, the factorization method with far-field data still remains valid, but the near-field version does not hold true any more, perhaps due to the lack of Huygens principle.
It is worthy noting that the wave-number-dependence of sources makes this paper quite different from \cite{GS}.  It is necessary to know the radiating period $\textcolor{rot}{[}t_{\min}, t_{\max}\textcolor{rot}{]}$ of the time-dependent source in advance. %If such an information is not available, we do not know how to establish the factorization method in the frequency domain.
Physically, this can be explained by the reason that arrival and terminal time points of wave signals at an observation point are available; we refer to Section \ref{sec4} for the physical interpretation. However, the a priori information on the radiating period $\rot{[}t_{\min}, t_{\max}\rot{]}$ can be relaxed to the condition that either $t_{\min}$ or $t_{\max}$ is known and this will be studied in our future works.
The reconstruction method considered here can be regarded as a frequency-domain method for recovering the support of a time-dependent source fulfilling the \rot{positivity} condition \eqref{F}. \rot{The novelty of this paper lies in the establishment of the multi-frequency factorization method for an inverse wave-number-dependent source problem. This has been achieved by considering a special kind of source terms. The Factorization scheme for general source terms, including inverse medium scattering problems, still remains open.}

The remaining part is organized as follows. In Section \ref{sec2}, the concept of multi-frequency far-field operator is introduced and a new range identity is verified. Section \ref{sec3} is devoted to the choice of test functions for characterizing the strip $K_D^{(\hat{x})}$ through the 'data-to-pattern' operator $L$. In Section \ref{sec4} we define indicator functions using the far-field and near-field data measured at one or several observation directions. We also present numerical tests for imaging
two disconnected supports in Section 5 for further explanations. %in Section \ref{num}. The readers are referred to the arXiv version of this paper for explanations.
%We comment on possible extensions of our reconstruction method  to source functions with in Section \ref{mul-compo} and numerical tests will implemented .

Below we introduce some notations to be used throughout this paper. Unless otherwise stated, we always suppose that $D$ is connected and bounded.
Given $\hat{x}\in \s^2$, we define
%\ben
$\hat{x}\cdot D:=\{t\in \R: t=\hat{x}\cdot y\;\mbox{for some}\; y\in D\}\subset \R.$
%\enn
Hence, $(\inf(\hat{x}\cdot D),  \sup(\hat{x}\cdot D))$ must be a finite and connected interval on the real axis.
A ball centered at $y\in \R^3$ with the radius $\epsilon>0$ will be denoted as $B_\epsilon(y)$. For brevity we write $B_\epsilon=B_\epsilon(0)$ when the ball is centered at the origin.
Obviously, $\hat{x}\cdot B_\epsilon(y)=(\hat{x}\cdot y-\epsilon,\hat{x}\cdot y+\epsilon)$.
In this paper the one-dimensional Fourier and inverse Fourier transforms are defined respectively by
\ben
(\mathcal{F}f)(k)=\frac{1}{\sqrt{2\pi}}\int_{\R}f(t)e^{-{\rm i} k t}\,dt,\quad
(\mathcal{F}^{-1}v)(t)=\frac{1}{\sqrt{2\pi}}\int_{\R}v(k)e^{{\rm i} k t}\,d\rot{k}.
\enn

\section{Factorization of far-field operator and a new range identity}\label{sec2}
Following the ideas of \cite{GS}, we introduce the central frequency  $k_c$ and  half of the bandwidth of the given data as
%\ben
$k_c:=(k_{\min}+k_{\max})/2$, $K :=(k_{\max}-k_{\min})/2.$
%\enn
For every
fixed  $\hat{x}\in \s^2$, we define the far-field operator by
\be \label{def:F}
(F\phi)(\tau)=(F^{(\hat{x})}\phi)(\tau)&:=&\int_{0}^{K } w^\infty(\hat{x}, k_c+\tau-s)\,\phi(s)\,ds, \quad \rot{\tau \in (0,K)}.%\\ \nonumber
%&=&\int_{0}^{K }\phi(s)\int_D e^{-{\rm i} (k_c+\tau-s)\hat{x}\cdot y}f(y,k_c+\tau-s)\,dy\,ds.
\en
Since $w^\infty(\hat{x},k)$ is analytic with respect to the wave number $k\in \R$, the operator
$F^{(\hat{x})}: L^2(0, K )\rightarrow L^2(0, K )$ is bounded.
For notational convenience we introduce the space
$X_D:=L^2(D\times(t_{\min}, t_{\max})).$
Denote by $\langle \cdot,  \cdot\rangle_{X_D}$ the inner product over $X_D$.
Below we shall prove a factorization of the far-field operator.
\begin{thm}\label{Fac-F}
We have $F=\rot{L\mathcal{T}L^*}$, where $L=L_D^{(\hat{x})}$ is defined by
\be\label{def:L}
(Lu)(\tau)=\int_{t_{\min}}^{t_{\max}}\int_D e^{-{\rm i} \tau (\hat{x}\cdot y\rot{-}t)} u(y,t)\,dy dt,\qquad \tau\in (0, K )
\en
for all $u\in X_D$,
and $\mathcal{T}: X_D\rightarrow X_D$ is a multiplication operator defined by
\be\label{oT}
(\mathcal{T}u)(y,t):=\rot{\frac{1}{\sqrt{2\pi}}}e^{-{\rm i} k_c (\hat{x}\cdot y\rot{-}t)}\,S(y, t)\,u(y, t) .
\en
\end{thm}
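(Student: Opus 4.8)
The plan is to establish $F = L\mathcal{T}L^*$ by direct computation: I would form the composition $L\mathcal{T}L^*$ explicitly, collapse the resulting triple integral, and recognize its kernel as the one defining $F$ in \eqref{def:F}. Since all operators act between $L^2$-spaces over bounded domains with bounded exponential kernels, every interchange of integration order below is justified by Fubini's theorem, and no convergence issues arise.

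First I would compute the adjoint $L^{*}:L^2(0,K)\to X_D$ from the defining relation $\langle Lu,\phi\rangle_{L^2(0,K)}=\langle u,L^{*}\phi\rangle_{X_D}$. Inserting \eqref{def:L} into the $L^2(0,K)$ inner product, exchanging the order of integration, and taking complex conjugates yields
\[
(L^{*}\phi)(y,t)=\int_0^{K} e^{{\rm i}\tau(\hat{x}\cdot y-t)}\,\phi(\tau)\,d\tau,\qquad (y,t)\in D\times(t_{\min},t_{\max}).
\]
Applying the multiplication operator $\mathcal{T}$ from \eqref{oT} gives
\[
(\mathcal{T}L^{*}\phi)(y,t)=\frac{1}{\sqrt{2\pi}}\,e^{-{\rm i}k_c(\hat{x}\cdot y-t)}\,S(y,t)\int_0^{K} e^{{\rm i}s(\hat{x}\cdot y-t)}\,\phi(s)\,ds.
\]

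Next I would apply $L$ once more. Substituting the previous expression into \eqref{def:L} and moving the $s$-integration outside, the three exponential factors combine into the single factor $e^{-{\rm i}(k_c+\tau-s)(\hat{x}\cdot y-t)}$, so that
\[
(L\mathcal{T}L^{*}\phi)(\tau)=\int_0^{K}\phi(s)\left[\frac{1}{\sqrt{2\pi}}\int_{t_{\min}}^{t_{\max}}\int_D e^{-{\rm i}(k_c+\tau-s)(\hat{x}\cdot y-t)}\,S(y,t)\,dy\,dt\right]ds.
\]
The bracketed inner integral is precisely the frequency-domain expression for the far-field pattern obtained by substituting \eqref{fxt} into \eqref{u-infty}, evaluated at wave number $k_c+\tau-s$; hence it equals $w^\infty(\hat{x},k_c+\tau-s)$. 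This identifies $(L\mathcal{T}L^{*}\phi)(\tau)=\int_0^{K} w^\infty(\hat{x},k_c+\tau-s)\,\phi(s)\,ds=(F\phi)(\tau)$, which is exactly \eqref{def:F}.

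I do not expect a genuine obstacle: the argument is essentially bookkeeping. The only points demanding care are getting the complex conjugation and the sign of the exponential in $L^{*}$ correct, matching the normalization constant $\tfrac{1}{\sqrt{2\pi}}$ sitting inside $\mathcal{T}$ against the one appearing in the $w^\infty$ formula, and checking that for $\tau,s\in(0,K)$ the argument $k_c+\tau-s$ remains within the data bandwidth $(k_{\min},k_{\max})=(k_c-K,k_c+K)$, so that the far-field pattern is indeed available there.
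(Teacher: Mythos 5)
Your proposal is correct and follows essentially the same route as the paper's proof: compute $L^*$ from the duality relation, apply $\mathcal{T}$, combine the exponentials into $e^{-{\rm i}(k_c+\tau-s)(\hat{x}\cdot y-t)}$, and identify the resulting inner integral with $w^\infty(\hat{x},k_c+\tau-s)$ via \eqref{fxt} and \eqref{u-infty}. Your closing remark that $k_c+\tau-s$ stays inside $(k_{\min},k_{\max})$ for $\tau,s\in(0,K)$ is a worthwhile sanity check that the paper leaves implicit.
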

\begin{proof}
We first claim that the adjoint operator $L^*: L^2(0, K )\rightarrow X_D$ of $L$ can be expressed by
\be\label{L}
(L^*\phi)(y,t):=\int_{0}^{K }e^{{\rm i}\tau(\hat{x}\cdot y\rot{-}t)}\phi(\tau)\,d\tau,\quad \phi\in L^2(0, K ).
\en
Indeed, for $u\in X_D$ and $\phi \in L^2(0,K)$, it holds that
\ben
\langle Lu, \phi \rangle_{L^2(0,K )}&=& \int_{0}^{K }\left(  \int_{t_{\min}}^{t_{\max}}\int_D e^{-{\rm i} \tau (\hat{x}\cdot y\rot{-}t)} u(y, t)\,dy\,dt\right)\overline{\phi(\tau)} d\tau \\
&=& \int_{t_{\min}}^{t_{\max}} \int_D u(y, t) \left(\int_{0}^{K }  \overline{\phi(\tau)\, e^{i\tau(\hat{x}\cdot y\rot{-}t)}}d\tau\right)\,dy\,dt
=\langle u, L^* \phi\rangle_{X_D}
\enn
which implies \eqref{L}.  By the definition of $\mathcal{T}$,
\ben
(\mathcal{T}L^*\phi)(y,t)=\rot{\frac{1}{\sqrt{2\pi}}}e^{-{\rm i} k_c (\hat{x}\cdot y\rot{-}t)}\,S(y, t)\int_{0}^{K }e^{is(\hat{x}\cdot y\rot{-}t)}\phi(s)\,ds,\quad \phi\in L^2(0, K ).\enn
Hence, combining \eqref{fxt}, \eqref{def:L} and \eqref{def:F} yields
\ben
(L\mathcal{T}L^*\phi)(\tau)&=& \int_{t_{\min}}^{t_{\max}}\int_D e^{-{\rm i} \tau(\hat{x}\cdot y\rot{-}t)}
\left(\rot{\frac{1}{2\pi}}e^{-{\rm i} k_c (\hat{x}\cdot y\rot{-}t)}\,S(y, t)\int_{0}^{K }e^{{\rm i}s(\hat{x}\cdot y\rot{-}t)}\phi(s)
\,ds\right)dy\, dt\\
&=&\rot{\frac{1}{\sqrt{2\pi}}}\int_{0}^{K }\int_D  e^{-{\rm i} (k_c+\tau-s)\hat{x}\cdot y}\phi(s)\,\left(
 \int_{t_{\min}}^{t_{\max}} S(y, t)e^{\rot{{\rm i}} (k_c+\tau-s)t}\,dt
\right)\,dy\,ds\\&=&\rot{\frac{1}{\sqrt{2\pi}}}\int_{0}^{K }\int_D  e^{-{\rm i} (k_c+\tau-s)\hat{x}\cdot y}\phi(s)\,f(y, k_c+\tau-s)\,dy\,ds=
\rot {(F\phi)(\tau)}.
 \enn
 This proves the factorization $F=\rot{L\mathcal{T}L^*}$.
\end{proof}
%Define a subset of $L^2(D\times(0, K ))$ by
%\ben
%X_D:=\left\{f|_{D\times(0, K )}: \begin{array}{lll}
%f(x, z)\in L^\infty(\R^3\times \C^+),\;
% {\rm Supp}\,f(\cdot, z)=D\;\mbox{for any} \; z\in \C^+, \\ \liminf_{\eta\rightarrow+\infty} |f(x, i\eta)|\;\mbox{is positive in a neighboring area of $\partial D$ in $D$}
%\end{array}
%\right\}.
%\enn
The operator $L^{(\hat{x})}_{D}$ maps a \rot{time-dependent source function $S(x,t)$ supported on $\overline{D}\times\rot{[}t_{\min}, t_{\max}\rot{]}$ to multi-frequency far-field patterns at the observation direction $\hat{x}$, that is, $w^\infty(\hat{x}, \cdot)=(L^{(\hat{x})}_{D} S)(\cdot)$.}
 It will be referred to as the 'data-to-pattern' operator within this paper.
Denote by  $\range  (L^{(\hat{x})}_{D})$ the range of the operator $L_D^{(\hat{x})}$ (see \eqref{def:L}) acting on $X_D$.

 \begin{lem}\label{com}
The operator $L_D^{(\hat{x})}: X_D\rightarrow L^2(0, K )$ is compact with dense range.
\end{lem}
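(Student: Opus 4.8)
The plan is to establish the two assertions separately, reading off the kernel of $L=L_D^{(\hat{x})}$ from \eqref{def:L}.

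\textbf{Compactness.} I would observe that $L$ is an integral operator from $X_D=L^2(D\times(t_{\min},t_{\max}))$ into $L^2(0,K)$ with kernel $\kappa(\tau;y,t)=e^{-{\rm i}\tau(\hat{x}\cdot y-t)}$. Since $|\kappa|\equiv 1$ and the sets $(0,K)$, $D$ and $(t_{\min},t_{\max})$ are all bounded, the kernel lies in $L^2\big((0,K)\times D\times(t_{\min},t_{\max})\big)$ with squared norm $K\,|D|\,(t_{\max}-t_{\min})<\infty$. Hence $L$ is a Hilbert--Schmidt operator and in particular compact; there is no subtlety here.

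\textbf{Dense range.} Since $L$ is bounded, $\overline{\range(L)}=(\ker L^*)^\perp$, so it suffices to show $\ker L^*=\{0\}$. Suppose $\phi\in L^2(0,K)$ satisfies $L^*\phi=0$ in $X_D$. From \eqref{L} the function $(L^*\phi)(y,t)=\int_0^K e^{{\rm i}\tau(\hat{x}\cdot y-t)}\phi(\tau)\,d\tau$ depends on $(y,t)$ only through the scalar $s:=\hat{x}\cdot y-t$; writing $g(s):=\int_0^K e^{{\rm i}\tau s}\phi(\tau)\,d\tau=\sqrt{2\pi}\,(\mathcal{F}^{-1}\widetilde{\phi})(s)$, where $\widetilde{\phi}$ is the extension of $\phi$ by zero to $\R$, the function $g$ extends to an entire function of $s$ by the Paley--Wiener theorem. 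The hypothesis $L^*\phi=0$ then reads $g(\hat{x}\cdot y-t)=0$ for a.e. $(y,t)\in D\times(t_{\min},t_{\max})$.

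The crux is to upgrade this almost-everywhere vanishing along the fibres of $\Psi(y,t):=\hat{x}\cdot y-t$ into the identical vanishing of $g$. I would argue that, $g$ being continuous, the set $V:=\{s:g(s)\neq 0\}$ is open, so $\Psi^{-1}(V)\cap\big(D\times(t_{\min},t_{\max})\big)$ is an open subset of $\R^4$ of Lebesgue measure zero, hence empty; thus $g$ vanishes on the whole image $\Psi\big(D\times(t_{\min},t_{\max})\big)$. Because $D$ is nonempty, open and connected, $\hat{x}\cdot D$ is a nontrivial open interval, and this image contains the nonempty open interval $\big(\inf(\hat{x}\cdot D)-t_{\max},\ \sup(\hat{x}\cdot D)-t_{\min}\big)$. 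Since $g$ is analytic and vanishes on an open interval, $g\equiv 0$ on $\R$, and injectivity of $\mathcal{F}^{-1}$ forces $\widetilde{\phi}=0$, i.e. $\phi=0$. This yields $\ker L^*=\{0\}$ and hence that $\range(L)$ is dense. I expect the main obstacle to be exactly this last step: converting a.e. vanishing of $g\circ\Psi$ into vanishing of $g$ on an interval, which relies both on the analyticity of $g$ and on the fact that $\hat{x}\cdot D$ genuinely contains an interval.
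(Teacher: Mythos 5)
Your proof is correct, and for the substantive part (dense range) it follows essentially the same route as the paper: reduce to injectivity of $L^*$, recognize $(L^*\phi)(y,t)$ as a function of the single variable $\xi=\hat{x}\cdot y-t$ equal, up to a constant, to the inverse Fourier transform of the zero-extension $\widetilde{\phi}$, observe that it vanishes on an interval of positive length, and invoke analyticity (Paley--Wiener) to force $\widetilde{\phi}=0$. You are in fact more careful than the paper on one point: the hypothesis $L^*\phi=0$ only gives almost-everywhere vanishing of $g\circ\Psi$ on $D\times(t_{\min},t_{\max})$, and your openness argument (a nonempty open set cannot have Lebesgue measure zero), combined with the fact that $\hat{x}\cdot D$ is a nondegenerate open interval because $D$ is an open connected domain, legitimately upgrades this to vanishing of $g$ on the whole interval $\bigl(\inf(\hat{x}\cdot D)-t_{\max},\ \sup(\hat{x}\cdot D)-t_{\min}\bigr)$; the paper passes over this step silently. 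Where you genuinely differ is compactness: the paper notes that $Lu\in H^1(0,K)$ for every $u\in X_D$ and uses the compact embedding of $H^1(0,K)$ into $L^2(0,K)$, whereas you observe that the kernel $e^{-{\rm i}\tau(\hat{x}\cdot y-t)}$ is bounded on a product set of finite measure, so $L$ is Hilbert--Schmidt and hence compact. Both arguments are standard and valid; yours is more elementary and self-contained, while the paper's highlights the smoothing property of $L$, the same mechanism that underlies the analyticity of $\tau\mapsto(Lu)(\tau)$ exploited elsewhere in the paper.
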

\begin{proof} For any $u\in X_D$, it holds that
 $L_D^{(\hat{x})}u\in H^1(0, K )$ by definition \eqref{def:L}. Since $H^1(0, K )$ is compactly embedded into
$L^2(0, K )$,  we get the compactness of $L_D^{(\hat{x})}$.
To prove the denseness of ${\rm Range}(L_D^{(\hat{x})})$ in $L^2(0, K )$, we only need to prove  the injectivity of $(L_D^{(\hat{x})})^*$. If $(L_D^{(\hat{x})})^*\phi=0$ for some $\phi\in L^2(0, K )$,
from \eqref{L} it follows that
\ben
\int_{0}^{K }e^{{\rm i}\tau(\hat{x}\cdot y\rot{-}t)}\phi(\tau)\,d\tau=0\quad\mbox{in}\quad X_D.
\enn
Denote by $\tilde{\phi}$ the extension of $\phi$ by zero from $(0, K )$ to $\R$.
The previous relation implies
\ben
0= \frac{1}{\sqrt {2\pi}} \int_{\R} e^{{\rm i}\tau(\hat{x}\cdot y\rot{-}t)}\tilde{\phi}(\tau)\,d\tau=(\mathcal{F}^{-1}\tilde{\phi})(\xi)
\enn
where $\xi=\hat{x}\cdot y\rot{-}t\in ( \inf(\hat{x}\cdot D)\rot{-t_{\max}}, \sup(\hat{x}\cdot D) \rot{-t_{\min}})$.
The analyticity of $(\mathcal{F}^{-1}\tilde{\phi})(\xi)$ in $\xi\in \R$ yields
 the identically vanishing of the inverse Fourier transform of $\tilde{\phi}$. This proves $\phi=0$ and thus the injectivity of $(L_D^{(\hat{x})})^*$.
\end{proof}
Now we want to connect the ranges of $F$ and $L$. The existing range identities (see e.g., \cite[Theorem 2.15]{KG08} and \cite[Theorems 4.1 and 4.4]{K2002})  are not applicable to our case, because the real part of  the middle operator $\mathcal{T}$ (see \eqref{oT}) cannot be decomposed into the sum of a positive and a compact operator. Nevertheless, the multiplication form of the middle operator motivates us to establish a new range identity. We first recall that, for a bounded operator $F: Y\rightarrow Y$ in a Hilbert space $Y$ its real and imaginary parts  are defined respectively by
%\be\label{RI}
$\real F=(F+F^*)/2,\quad \rot{\ima F=(F-F^*)/(2i)},$
%\en
which are both self-adjoint operators. Furthermore, by spectral representation we define the self-adjoint and positive operator $|\real F|$ as
\ben
|\real F|=\int_{\R} |\lambda|\, d E_\lambda,\qquad \mbox{if}\quad \real F=\int_{\R} \lambda\, d E_\lambda.
\enn
The self-adjoint and positive operator $|\ima F|$ can be defined analogously.
In this paper the operator $F_{\#}$ is defined as
$F_{\#}:=|\real F| +|\ima F|.$
\rot{If} $F_{\#}$ is self-adjoint and positive, its square root $F_{\#}^{1/2}$ is well defined.
%\ben
%F_{\#}^{1/2}:=\int_{\R_+} \sqrt{\lambda}\, d E_\lambda,\qquad \mbox{if}\quad  F_{\#}=\int_{\R_+} \lambda\, d E_\lambda.
%\enn
We need the following auxiliary result from functional analysis.
\begin{thm}\label{range}
%Let $X\subset U\subset X^*$ be a Gelfand triple with a Hilbert space $U$ and a reflexive Bananch space $X$ such that the embeddings are dense.
Let $X$ and $Y$ be Hilbert spaces  and let $F: Y\rightarrow Y$, $L: X\rightarrow Y$, and $\mathcal{T}: X\rightarrow X$ be bounded linear operators such that $F=L\mathcal{T}L^*$. We make the following assumptions
\begin{itemize}
\item[(i)] $L$ is compact with dense range and thus $L^*$ is compact and one-to-one.
\item[(ii)] $\real \mathcal{T}$ and $\ima \mathcal{T}$ are both one-to-one and the operator $\mathcal{T}_{\#}=|\real \mathcal{T}| +|\ima \mathcal{T}|: X\rightarrow X$ is coercive, i.e., there exists $c>0$ with
\ben
\big\langle \mathcal{T}_{\#}\, \varphi, \varphi\big\rangle_X\geq c\,||\varphi||_X^2\quad\mbox{for all}\quad \varphi\in X.
\enn
\end{itemize}
Then the operator $F_{\#}$ is positive and  the ranges of $F_{\#}^{1/2}:Y\rightarrow Y$ and  $L: X\rightarrow Y$ coincide.
\end{thm}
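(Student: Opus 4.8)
The plan is to run the abstract range-identity machinery of the factorization method, reducing the claimed equality of ranges to a two-sided quadratic-form estimate. Positivity of $F_{\#}$ comes for free: $|\real F|$ and $|\ima F|$ are the images of the self-adjoint operators $\real F,\ima F$ under the continuous functional calculus applied to $\lambda\mapsto|\lambda|\ge 0$, so each is positive and self-adjoint, hence so is $F_{\#}$ and $F_{\#}^{1/2}$ is well defined. For the range statement I would invoke the standard range-inclusion lemma (for bounded operators $A,B$ into the same space, $\range(A)\subseteq\range(B)$ iff $\|A^*\psi\|\le c\,\|B^*\psi\|$ for all $\psi$). Since $F_{\#}^{1/2}$ is self-adjoint, applying this in both directions shows that $\range(F_{\#}^{1/2})=\range(L)$ is equivalent to the operator inequalities $c_1\,LL^*\le F_{\#}\le c_2\,LL^*$, i.e. to
\[
c_1\,\|L^*\psi\|_X^2\le\langle F_{\#}\psi,\psi\rangle_Y\le c_2\,\|L^*\psi\|_X^2,\qquad \psi\in Y .
\]

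I would then unwind the factorization at the level of real and imaginary parts. From $F=L\mathcal{T}L^*$ and $F^*=L\mathcal{T}^*L^*$ one gets $\real F=L(\real\mathcal{T})L^*$ and $\ima F=L(\ima\mathcal{T})L^*$, both compact and self-adjoint. Writing $\varphi=L^*\psi$, the diagonal forms transfer cleanly, $\langle \real F\,\psi,\psi\rangle=\langle \real\mathcal{T}\,\varphi,\varphi\rangle$ and similarly for the imaginary part, and by Lemma \ref{com} the vectors $\varphi$ exhaust a dense subspace of $X$. At this stage assumption (ii) enters: coercivity of $\mathcal{T}_{\#}$ gives $\langle\mathcal{T}_{\#}\varphi,\varphi\rangle\ge c\|\varphi\|^2$, which is the natural candidate lower bound, and boundedness of $\mathcal{T}$ is the candidate for the upper bound.

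The hard part is that the absolute value does not commute with conjugation by $L$: in general $|\real F|\ne L\,|\real\mathcal{T}|\,L^*$, and since the self-adjoint order is not a lattice, the harmless bounds $\pm\real F\le\|\real\mathcal{T}\|\,LL^*$ do not upgrade to $|\real F|\le c\,LL^*$. Worse, $\real F$ (or $\ima F$) taken alone can have range strictly smaller than $\range(L)$, because spectral components of $\real\mathcal{T}$ of opposite sign may cancel after the compact, non-invertible map $L$. Thus neither the upper nor the lower bound can be obtained part-by-part, and the coercivity of $\mathcal{T}_{\#}$ must be used to forbid the real and imaginary parts from degenerating \emph{simultaneously}; this is exactly the feature that is missing from the classical identities cited after Lemma \ref{com}.

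To make this work I expect the decisive extra structure to be that $\mathcal{T}$ is a multiplication operator (Theorem \ref{Fac-F}), so that $\real\mathcal{T}$ and $\ima\mathcal{T}$ commute and $\mathcal{T}$ is normal with $|\mathcal{T}|=(\,(\real\mathcal{T})^2+(\ima\mathcal{T})^2\,)^{1/2}$ comparable to $\mathcal{T}_{\#}$, hence boundedly invertible. Using the polar decomposition $\mathcal{T}=U|\mathcal{T}|$ with $U$ unitary commuting with $|\mathcal{T}|$, one writes $\mathcal{T}=|\mathcal{T}|^{1/2}U|\mathcal{T}|^{1/2}$ and absorbs $|\mathcal{T}|^{1/2}$ into $L$: setting $\widehat L=L|\mathcal{T}|^{1/2}$ (still compact, with $\range(\widehat L)=\range(L)$ since $|\mathcal{T}|^{1/2}$ is bounded and boundedly invertible) reduces the problem to $F=\widehat L\,U\,\widehat L^*$ with a \emph{unitary} middle factor, for which $U_{\#}=|\real U|+|\ima U|$ is pinched between $I$ and $\sqrt2\,I$. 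I would then prove the two-sided estimate in this reduced form by a spectral/compactness argument; the lower bound $F_{\#}\ge c\,\widehat L\widehat L^*$ is the main obstacle, attacked by contradiction along a sequence $\psi_n$ with $\|L^*\psi_n\|=1$ and $\langle F_{\#}\psi_n,\psi_n\rangle\to0$, where the delicate point is that $\|\psi_n\|$ may blow up while $\|L^*\psi_n\|$ stays of order one, so the weak-limit arguments must be routed through the compactness of $L$ acting on $\varphi_n=L^*\psi_n$ rather than through compactness of $\psi_n$ itself, and the injectivity of $\real\mathcal{T},\ima\mathcal{T}$ used to exclude the limiting degeneracy.
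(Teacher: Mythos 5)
You have correctly located the crux --- the classical range identities do not apply because the absolute value does not commute with conjugation by $L$, and the real and imaginary parts must be prevented from degenerating simultaneously --- but your proposal does not close this gap, and the step where it would have to be closed is left as an intention (``attacked by contradiction along a sequence $\psi_n$\dots''). Two concrete problems. First, your fallback imports structure that is not among the hypotheses: Theorem \ref{range} is purely abstract ($\mathcal{T}$ bounded, $\real \mathcal{T}$ and $\ima \mathcal{T}$ injective, $\mathcal{T}_{\#}$ coercive), and nothing entitles you to assume $\mathcal{T}$ is normal, that $\real\mathcal{T}$ and $\ima\mathcal{T}$ commute, or that a polar decomposition with a unitary phase commuting with $|\mathcal{T}|$ exists; at best you would be proving the application, not the stated theorem. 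Second, even after your reduction to a unitary middle factor, the contradiction argument as sketched does not run: from $\langle F_{\#}\psi_n,\psi_n\rangle\to 0$ you get $\langle(\real F)\psi_n,\psi_n\rangle\to 0$, hence $\langle(\real\mathcal{T})\varphi_n,\varphi_n\rangle\to 0$ with $\varphi_n=L^*\psi_n$, but the inequality $|\langle(\real\mathcal{T})\varphi,\varphi\rangle|\le\langle|\real\mathcal{T}|\varphi,\varphi\rangle$ goes the wrong way, so this yields no lower-bound control of $\langle\mathcal{T}_{\#}\varphi_n,\varphi_n\rangle$ and the coercivity hypothesis is never brought to bear. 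The pinching $I\le U_{\#}\le\sqrt{2}\,I$ concerns $U_{\#}$ itself, not $|\real(\widehat{L}U\widehat{L}^*)|$, so the sign-cancellation you yourself flagged survives your reduction intact.

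The missing idea is that, under hypothesis (ii), the identity you declare false in general, $|\real F|=L\,|\real\mathcal{T}|\,L^*$ (and its imaginary-part analogue), actually \emph{does} hold, and proving it is the entire content of the paper's argument. One first reduces to the case where $L^*$ has dense range, splits $Y=Y^+\oplus Y^-$ along the sign of the eigenvalues of the compact self-adjoint operator $\real F=L(\real\mathcal{T})L^*$, and shows that $U^{\pm}=\overline{L^*(Y^{\pm})}$ give a decomposition $X=U^+\oplus U^-$ on which the quadratic form of $\real\mathcal{T}$ is sign-definite; the injectivity of $\real\mathcal{T}$ is used exactly once, to force $U^+\cap U^-=\{0\}$ via a polarization argument. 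With the associated projections $P^{\pm}$, $Q^{\pm}$ one computes $|\real F|=(P^+-P^-)\real F=L\,[(\real\mathcal{T})(Q^+-Q^-)]\,L^*=L\,|\real\mathcal{T}|\,L^*$, hence $F_{\#}=L\,\mathcal{T}_{\#}\,L^*=(L\mathcal{T}_{\#}^{1/2})(L\mathcal{T}_{\#}^{1/2})^*$, and the standard lemma ($AA^*=BB^*$ implies $\range(A)=\range(B)$) together with the bounded invertibility of $\mathcal{T}_{\#}^{1/2}$ gives $\range(F_{\#}^{1/2})=\range(L)$. Your Douglas-lemma reformulation as a two-sided estimate $c_1\,LL^*\le F_{\#}\le c_2\,LL^*$ is a correct equivalent target, but without the exact factorization of $F_{\#}$ through $L$ you have no route to the lower bound.
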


\begin{proof}
We first recall from Part A in the proof of \cite[Theorem 2.15]{KG08} that it is sufficient to assume that $L^*: Y\rightarrow X$ has dense range in $X$. If otherwise, we may replace $X$ by the closed subspace $\overline{\mbox{Range} (L^*)}$ by using the orthogonal projection from $X$ onto $\overline{\mbox{Range} (L^*)}$. Below we shall prove the decomposition $F_{\#}=L\mathcal{T}_{\#}L^*$. For this purpose we only need to show
\be\label{rF}
|\real F|=L\,|\real \mathcal{T}|\, L^*,\qquad |\ima F|=L\,|\ima \mathcal{T}|\, L^*.
\en
It suffices to consider the real part of $F$, because the imaginary part can be treated similarly.
Since $\real F=L\,(\real \mathcal{T})\, L^*$ is self-adjoint, it has a complete orthonormal eigensystem $\{(\lambda_j, \psi_j): j\in \N\}$. Hence, the space $Y$ can be split into two closed orthogonal subspaces $Y=Y^-\oplus Y^+$ with
\ben
Y^-=\mbox{span}\{\psi_j: \lambda_j\leq 0\},\,\quad Y^+=\mbox{span}\{\psi_j: \lambda_j\geq 0\}.
\enn
\rot{We note that $Y^- \cap Y^+=\{0\}$, since $\lambda_j=0$ yields $\psi_j=0$}.
It is obvious that $\langle(\real F) \psi, \psi\rangle_Y$ is non-negative on $Y^+$ and non-positive on $Y^-$. \\ Consequently, $\langle (\real \mathcal{T})\phi,  \phi \rangle_X$ is non-negative on $U^+:=\overline{L^*(Y^+)}$ and is  non-positive on $U^-:=\overline{L^*(Y^-)}$, because
\be\label{rT}
\langle (\real \mathcal{T})\phi, \phi \rangle_X=\langle (\real \mathcal{T})L^*(\psi^\pm), L^*(\psi^\pm) \rangle_X
=\langle (\real F)(\psi^\pm), \psi^\pm \rangle_Y \rot{\lesseqqgtr} 0,
\en
where $\phi=L^*(\psi^\pm)$ with $\psi^\pm\in Y^\pm$. This implies that
\be\label{T}
\langle (\real \mathcal{T})\phi, \phi \rangle_X=0,\qquad\mbox{if}\quad \phi\in U^+ \cap U^-.
\en
For $\phi\in U^+ \cap U^-$, we have $\phi^\pm+t\phi \in U^\pm$ for all $\phi^\pm\in U^\pm$  and for all $t\in \C$. This together with the relation \eqref{T}
leads to $\langle (\real \mathcal{T})\phi, \phi^\pm \rangle_X=0$; see the Part C in the proof of \cite[Theorem 2.15]{KG08}. From this we deduce that  $(\real \mathcal{T})\phi=0$. Since $\real \mathcal{T}$ is one-to-one, we thus obtain $\phi=0$. In view of the denseness of the range of
 $L^*$, this proves the orthogonal decomposition  $X=U^+ \oplus U^- $.

To proceed with the proof, we denote by $P_Y^\pm$ the orthogonal projectors from $Y$ onto $Y^\pm$. Since $\real F$ is invariant on both $Y^+$ and $Y^-$, there holds
\be\label{realF}
|\real F|=(P^+-P^-) (\real F)= (\real F)(P^+-P^-)=L\,(\real \mathcal{T})\, L^*\,(P^+-P^-).
\en
Introduce the \rot{orthogonal projections}  $Q_U^\pm: X\rightarrow U^\pm$. It is then easy to conclude the relations
%\ben
$(\real \mathcal{T})\, L^*\, P_Y^\pm=(\real \mathcal{T})\,Q_U^\pm L^*$.
%\enn
Therefore, using \eqref{realF} and \eqref{rT},
\ben
|\real F|=L\,(\real \mathcal{T})\, L^*\,(P_Y^+-P_Y^-)=L\,[(\real \mathcal{T})(Q_U^+-Q_U^-)]\, L^*=L\, |\real \mathcal{T}|\, L^*,
\enn
which proves the first relation in \eqref{rF} and thus also the decomposition $F_{\#}=L\mathcal{T}_{\#}L^*$. By the second assumption (ii), we obtain the positivity of $F_{\#}$.
By the coercivity of  $\mathcal{T}_{\#}$, we can define the square root operator $\mathcal{T}_{\#}^{1/2}$, which is also coercive and self-adjoint.  Thus, we have a decomposition of $F_{\#}$ in the form
\ben
F_{\#}=(L\mathcal{T}_{\#}^{1/2})\, (L\mathcal{T}_{\#}^{1/2})^*=F_{\#}^{1/2}\,(F_{\#}^{1/2})^*.
\enn
Application of \cite[Theorem 1.21]{KG08} gives
%\ben
%\mbox{Range}\,
$(F_{\#}^{1/2})=\mbox{Range}\,(L).$
%\enn
%{\bf Step 1:} Reduction to the case $L: X\rightarrow Y$ is one-to-one. %Since the middle operator $T$ is an isomorphism, the factorization form of $F$ can be written as
%\ben
%S=[LT^*]\,(T^*)^{-1} [LT^*]^*=\tilde{L}\tilde{T}\tilde{L}^*
%\enn
%with \tilde{L}:
\end{proof}

To apply Theorem \ref{range} to our inverse problem, we set
%\ben
$F=F_D^{(\hat{x})}, \quad X=X_D,\quad Y=L^2(0, K )$
%\enn
and let $\mathcal{T}$ be the multiplication operator of \eqref{oT}. Since the source function $S(x, t)$ is real valued, it follows from \eqref{oT} that
\ben
(\real \mathcal{T})\, u=\rot{\frac{1}{\sqrt{2\pi}}}\cos \big(k_c (\hat{x}\cdot y\rot{-}t)\big)\, S(y, t)\, u(y,t),\\
(\ima \mathcal{T})\, u=\rot{\frac{1}{\sqrt{2\pi}}}\sin \big(k_c (\hat{x}\cdot y\rot{-}t)\big)\, S(y, t)\, u(y,t),
\enn both of them are one-to-one operators from $X_D$ onto $X_D$. The coercivity assumption on $S$ yields the  coercivity of  $\mathcal{T}_{\#}$, that is,
\ben
\langle \mathcal{T}_\#\, u, u\rangle&=&\int_{t_{\min}}^{t_{\max}}\int_D  \rot{\frac{1}{\sqrt{2\pi}}}\Big(|\cos \big(k_c (\hat{x}\cdot y\rot{-}t)\big)|+ |\sin \big(k_c (\hat{x}\cdot y\rot{-}t)\big)|\Big)\, S(y, t)\, |u(y,t)|^2\,dydt\\
&\geq& \rot{\frac{1}{\sqrt{2\pi}}}\int_{t_{\min}}^{t_{\max}}\int_D S(y, t)\, |u(y,t)|^2\,dydt
\geq
\rot{\frac{1}{\sqrt{2\pi}}}c_0\,||u||_{X_D}^2.
\enn

As a consequence of Theorem \ref{range}, we obtain
\be\label{RI}
\mbox{Range}\, [(F_D^{(\hat{x})})_{\#}^{1/2}]=\mbox{Range}\,(L_D^{(\hat{x})})\quad\mbox{ for all}\quad \hat{x}\in \s^2.
\en

%Below we describe the factorization method for imaging $D$.
Let $\chi(k)\in L^2(0, K )$ be some test function.
Denote by $(\lambda_n^{(\hat{x})}, \psi_n^{(\hat{x})})$ an eigensystem of the positive and self-adjoint operator $ (F_D^{(\hat{x})})_{\#}$, which is uniquely determined by the multi-frequency far-field patterns $\{w^\infty(\hat{x}, k): k\in (k_{\min}, k_{\max})\}$. Applying Picard's theorem and Theorem \ref{range}, we obtain
\be\label{indicator}
\chi \in \range (L_D^{(\hat{x})})\quad\mbox{if and only if}\quad \sum_{n=1}^\infty\frac{|\langle \chi, \psi_n^{(\hat{x})} \rangle|^2}{ |\lambda_n^{(\hat{x})}|}<\infty.
\en
To establish the factorization method, we now need to choose a suitable class of test functions
which usually rely on a sample variable in $\R^3$. The  inclusion relationship between the test function and ${\rm Range}(L_D^{(\hat{x})})$ should be associated with the inclusion relationship between the corresponding sample variable and the region $D$.

\begin{rem}\label{re3.1} In the special case that $k_{\min}=0$, we can also apply the range identity of \cite[Theorem 4.1]{K2002} to get \eqref{RI}. In fact,
since $w^\infty(\hat{x}, -k)=\overline{w^\infty(\hat{x}, k)}$, we may extend the bandwidth from $(0, k_{\max})$ to  $(-k_{\max}, k_{\max})$. Hence, one deduces from these new measurement data with $k_{\min}=-k_{\max}$ that
 $k_c=0$ and $K =k_{\max}$. Consequently, the middle operator $\mathcal{T}$ is self-adjoint, due to the multiplication form
$\mathcal{T} u= S\, u$ for $u\in X_D$ . This implies that $F_D^{(\hat{x})}$ is
also self-adjoint. Moreover, $F_D^{(\hat{x})}$ and $\mathcal{T}$ are both positive definite under the assumption \eqref{F} and thus  $(F_D^{(\hat{x})})_{\#}=F_D^{(\hat{x})}$, $\mathcal{T}_{\#}=\mathcal{T}$. The range identity stated in Theorem \ref{range} allows us to handle a more general class of wave-number bands, in particular an interval of wave-numbers bounded away from zero. %This has also relaxed the additional constraints of \cite[Section 6.2]{GS17} on the central frequency when $k_c\neq 0$ and if the source term does not rely on the wavenumber.
\end{rem}

\section{Range of $L_D^{(\hat{x})}$ and test functions}\label{sec3}
In this section we choose a proper class of test functions to characterize the range of $L_D^{(\hat{x})}$. Throughout the paper we set $T:=t_{\max}-t_{\rot{min}}>0$.
\begin{lem}\label{lem:range}
 Let $D_1, D_2 \subset \R^3$ be bounded domains such that $\hat{x}\cdot D_1\cap  \hat{x}\cdot D_2=\emptyset$.  Suppose one of the following relations holds \be\label{condition-T}
\inf(\hat{x}\cdot D_1)-\sup(\hat{x}\cdot D_2)>T,\quad  \inf(\hat{x}\cdot D_2)-\sup(\hat{x}\cdot D_1)>T.
\en
Then $\range  (L^{(\hat{x})}_{D_1})\cap \range  (L^{(\hat{x})}_{D_2})=\{0\}$, that is, the ranges of $L_{D_j}^{(\hat{x})}$ over $X_{D_1}$ and $X_{D_2}$ have trivial intersections.
\end{lem}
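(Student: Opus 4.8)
The plan is to identify every element of $\range(L_{D_j}^{(\hat x)})$ with the Fourier transform of a finite measure supported on an explicit interval, and then to read off from \eqref{condition-T} that the two intervals are disjoint. Concretely, for each $j$ I would consider the map $\Psi_j(y,t)=\hat x\cdot y-t$ on $D_j\times(t_{\min},t_{\max})$; as already noted in the proof of Lemma~\ref{com}, its image lies in $I_j:=[\inf(\hat x\cdot D_j)-t_{\max},\,\sup(\hat x\cdot D_j)-t_{\min}]$. Given $u_j\in X_{D_j}$, which also lies in $L^1$ because $D_j$ and $(t_{\min},t_{\max})$ are bounded, I would let $\mu_j$ be the pushforward under $\Psi_j$ of the finite complex measure $u_j(y,t)\,dy\,dt$. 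Then $\mu_j$ is a finite complex Borel measure with $\mathrm{supp}\,\mu_j\subset I_j$, and the definition \eqref{def:L} rewrites as $(L_{D_j}^{(\hat x)}u_j)(\tau)=\int_\R e^{-{\rm i}\tau\xi}\,d\mu_j(\xi)$ for $\tau\in(0,K)$.

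Next, suppose $g=L_{D_1}^{(\hat x)}u_1=L_{D_2}^{(\hat x)}u_2$ lies in the intersection of the two ranges. Both expressions are restrictions to $(0,K)$ of entire functions of $\tau$ (one may differentiate under the integral sign, the integrand being entire in $\tau$ and the domain of integration bounded), so by the identity theorem the two Fourier--Stieltjes transforms $\int_\R e^{-{\rm i}\tau\xi}\,d\mu_1(\xi)$ and $\int_\R e^{-{\rm i}\tau\xi}\,d\mu_2(\xi)$ coincide for every $\tau\in\R$. Since the Fourier--Stieltjes transform is injective on the space of finite complex measures, this forces $\mu_1=\mu_2$.

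Finally I would invoke \eqref{condition-T}, which is precisely the condition that $I_1\cap I_2=\emptyset$: the first alternative $\inf(\hat x\cdot D_1)-\sup(\hat x\cdot D_2)>T$ rearranges to $\inf I_1=\inf(\hat x\cdot D_1)-t_{\max}>\sup(\hat x\cdot D_2)-t_{\min}=\sup I_2$, and the second alternative gives $\inf I_2>\sup I_1$ by symmetry. A measure equal to both $\mu_1$ (supported in $I_1$) and $\mu_2$ (supported in $I_2$) must therefore vanish, so $g=\int_\R e^{-{\rm i}\tau\xi}\,d\mu_1(\xi)=0$. I expect the one step needing genuine care to be the rigorous construction of the profile measure $\mu_j$ together with the identification of its support $I_j$: the temporal integration convolves the spatial projection $\hat x\cdot D_j$ with the interval $(t_{\min},t_{\max})$, thereby stretching it by the full radiating length $T$. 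This stretching is exactly why separation by more than $T$ is required here, in contrast to the wave-number-independent setting of \cite{GS}, where disjointness of the projections alone suffices.
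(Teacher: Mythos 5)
Your proposal is correct and follows essentially the same route as the paper's proof: both rewrite $L_{D_j}^{(\hat x)}u_j$ as the Fourier transform of a one-dimensional profile supported in $\bigl(\inf(\hat x\cdot D_j)-t_{\max},\,\sup(\hat x\cdot D_j)-t_{\min}\bigr)$, extend the identity to all $\tau\in\R$ by analyticity, and conclude from the disjointness of the two supporting intervals guaranteed by \eqref{condition-T}. The only difference is cosmetic: you realize the profile as the pushforward measure of $u_j(y,t)\,dy\,dt$ under $(y,t)\mapsto\hat x\cdot y-t$, whereas the paper writes it as the level-set density $g_j(\xi,\hat x)=\int_{t_{\min}}^{t_{\max}}\int_{\Gamma_j(\xi+t,\hat x)}f_j\,ds(y)\,dt$; your measure-theoretic formulation sidesteps the implicit coarea-formula step but carries the same content.
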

\begin{proof}
Let $f_j\in X_{D_j}$ be such that $L_{D_1}^{(\hat{x})} f_1=  L_{D_2}^{(\hat{x})} f_2:=\mathcal{G}(\cdot,\hat{x})$. By the definition of $L_D^{(\hat{x})}$ (see \eqref{def:L}), the function
\ben
\tau\mapsto \mathcal{G}(\tau,\hat{x})= \int_{t_{\min}}^{t_{\max}}\int_{D_1} e^{-{\rm i} \tau (\hat{x}\cdot y\rot{-}t)} f_1(y,t)\,dydt
= \int_{t_{\min}}^{t_{\max}}\int_{D_2} e^{-{\rm i} \tau (\hat{x}\cdot y\rot{-}t)} f_2(y,t)\,dydt
\enn
belongs to $L^2(0, K )$. Since $\mathcal{G}(\cdot,\hat{x})$ is analytic,
 the previous relation is valid for all $\tau\in \R$.  Extending $f_j$ by zero from $(t_{\min},t_{\max})$ to $\R$ and letting $\xi=\hat{x}\cdot y\rot{-}t$,  we can rewrite
the integrals over $D_j$ as
\ben
\int_{D_j} e^{-{\rm i} \tau (\hat{x}\cdot y\rot{-}t)} f_j(y,t)\,dy=\int_{\R}e^{-{\rm i} \tau \xi} \int_{\Gamma_j(\xi\rot{+}t, \hat{x})} f_j(y, t)ds(y)\,d\xi,
\enn
where $\Gamma_j(t,\hat{x})\subset D_j$ is defined as
\ben
\Gamma_j(t, \hat{x}):=\{y\in D_j: \hat{x}\cdot y=t\}\subset \R^3,\quad t\in \R,\quad j=1,2.
\enn
This implies that the function $\mathcal{G}(\cdot, \hat{x})$ is \rot{the  Fourier transform of $g_j$:}
\be\label{G}
\mathcal{G}(\tau, \hat{x})=\int_\R e^{-{\rm i} \tau \xi} g_j(\xi, \hat{x})\,d\xi,\quad\quad \tau\in \R,
\en
with
\ben g_j(\xi, \hat{x})&:=&\int_{t_{\min}}^{t_{\max}} \int_{\Gamma_j(\xi\rot{+}t, \hat{x})} f_j(y, t)ds(y) dt
= \int^{\xi\rot{+t_{\max}}}_{\xi\rot{+t_{\min}}} \int_{\Gamma_j(t, \hat{x})} f_j(y, \rot{t-\xi})ds(y) dt
\enn
for $j=1,2$.
%\ben
%\int_{\xi-T}^{\xi}\int_{\Gamma_j(t, \hat{x})} f_j(y, \xi-t)\,ds(y)\,dt=\int^{\xi}_{-\infty}\int_{\Gamma_j(t, \hat{x})} f_j(y, \xi-t)\,ds(y)\,dt
%\enn
By the arbitrariness of $\tau\in \R$, we get from \eqref{G} that $g_1(\xi,\hat{x})=g_2(\xi, \hat{x})$ for all $\xi\in \R$. On the other hand, observing that   $$\G_j(t, \hat{x})=\emptyset\qquad\mbox{if}\quad
 t>\sup(\hat{x}\cdot D_j)\quad\mbox{or}\quad t<\inf(\hat{x}\cdot D_j),$$ we have
$$g_j(\xi, \hat{x})=0\qquad\mbox{if}\quad \xi\rot{+}t_{\min}\rot{>\sup} (\hat{x}\cdot D_j)\quad\mbox{or} \quad\xi\rot{+}t_{\max}\rot{<\inf} (\hat{x}\cdot D_j).$$ This implies
\ben
{\rm supp}\, g_j(\cdot, \hat{x})\subset \Big(\,\inf(\hat{x}\cdot D_j)\rot{-t_{\max}},\, \sup (\hat{x}\cdot D_j)\rot{-t_{\min}}\,\Big),\quad j=1,2,
\enn
By the conditions in \eqref{condition-T}, it is clear that  one of the following relations holds:
\ben
\inf(\hat{x}\cdot D_1)\rot{-t_{\max}}>\sup(\hat{x}\cdot D_2)\rot{-t_{\min}},\\
    \inf(\hat{x}\cdot D_2)\rot{-t_{\max}}>\sup(\hat{x}\cdot D_1)\rot{-t_{\min}},
\enn
leading to $g_1(\xi,\hat{x})=g_2(\xi, \hat{x})\equiv 0$ for all $\xi\in \R$ in any case. Recalling \eqref{G}, we obtain
$L_{D_1}^{(\hat{x})} f_1=  L_{D_2}^{(\hat{x})} f_2\equiv 0$.
\end{proof}
As a consequence of the proof of Lemma \ref{lem:range}, we can get information on the supporting interval of the inverse Fourier transform of $L_D^{(\hat{x})} f$ for $f\in X_D$ as follows.
\begin{cor}\label{lem:supp} Let $D\subset \R^3$ be a bounded domain and $t_{\max}>t_{\min}$. Define
\ben
 \mathcal{G}(\tau,\hat{x}):= \int_{t_{\min}}^{t_{\max}}\int_{D} e^{-{\rm i} \tau (\hat{x}\cdot y\rot{-}t)} f(y,t)\,dydt,\qquad f\in X_D.
\enn
Then the support of the inverse Fourier transform of  $\mathcal{G}(\cdot,\hat{x})$ is contained in the interval
$\big( \inf(\hat{x}\cdot D)\rot{-t_{\max}},\; \sup(\hat{x}\cdot D)\rot{-t_{\min}} \big)$.
\end{cor}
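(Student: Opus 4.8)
The plan is to reuse the slicing representation already established in the proof of Lemma~\ref{lem:range}, now specialized to the single domain $D$. First I would extend $f$ by zero from $(t_{\min},t_{\max})$ to all of $\R$ and perform the change of variables $\xi=\hat{x}\cdot y-t$ in the spatial integral, foliating $D$ by its level sets $\Gamma(t,\hat{x}):=\{y\in D:\hat{x}\cdot y=t\}$. Exactly as in Lemma~\ref{lem:range} this yields
\ben
\mathcal{G}(\tau,\hat{x})=\int_{\R}e^{-{\rm i}\tau\xi}\,g(\xi,\hat{x})\,d\xi,
\enn
where $g(\xi,\hat{x}):=\int_{t_{\min}}^{t_{\max}}\int_{\Gamma(\xi+t,\hat{x})}f(y,t)\,ds(y)\,dt$; this holds for all $\tau\in\R$ by the analyticity of $\mathcal{G}(\cdot,\hat{x})$. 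In particular $\mathcal{G}(\cdot,\hat{x})=\sqrt{2\pi}\,(\mathcal{F}g)(\cdot,\hat{x})$ with the Fourier conventions fixed in the introduction.

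Next I would invoke the inversion formula. Applying $\mathcal{F}^{-1}$ to $\mathcal{G}(\cdot,\hat{x})=\sqrt{2\pi}\,\mathcal{F}g(\cdot,\hat{x})$ and using $\mathcal{F}^{-1}\mathcal{F}={\rm id}$ gives $\mathcal{F}^{-1}\mathcal{G}(\cdot,\hat{x})=\sqrt{2\pi}\,g(\cdot,\hat{x})$, so the inverse Fourier transform of $\mathcal{G}(\cdot,\hat{x})$ is a nonzero constant multiple of $g(\cdot,\hat{x})$ and hence shares its support.

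Finally I would read off the support of $g(\cdot,\hat{x})$ as in the proof of Lemma~\ref{lem:range}: since $\Gamma(s,\hat{x})=\emptyset$ whenever $s>\sup(\hat{x}\cdot D)$ or $s<\inf(\hat{x}\cdot D)$, the inner integral defining $g(\xi,\hat{x})$ vanishes unless $\xi+t\in[\inf(\hat{x}\cdot D),\sup(\hat{x}\cdot D)]$ for some $t\in[t_{\min},t_{\max}]$. Taking the extreme admissible values $t=t_{\min}$ and $t=t_{\max}$ shows that $g(\xi,\hat{x})=0$ whenever $\xi+t_{\min}>\sup(\hat{x}\cdot D)$ or $\xi+t_{\max}<\inf(\hat{x}\cdot D)$, which is precisely the asserted inclusion ${\rm supp}\,\mathcal{F}^{-1}\mathcal{G}(\cdot,\hat{x})\subset\big(\inf(\hat{x}\cdot D)-t_{\max},\,\sup(\hat{x}\cdot D)-t_{\min}\big)$.

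I expect no substantive obstacle, since the corollary is essentially an explicit distillation of the computation already carried out inside Lemma~\ref{lem:range}. The only points requiring care are the bookkeeping of the $\sqrt{2\pi}$ normalization under the adopted transform conventions and the observation that, although $\mathcal{G}(\cdot,\hat{x})$ was originally introduced as an element of $L^2(0,K)$, its defining integral is entire in $\tau$; the inverse Fourier transform is therefore unambiguously defined on $\R$ and coincides with $\sqrt{2\pi}\,g$.
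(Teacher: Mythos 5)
Your proposal is correct and follows exactly the route the paper intends: the paper gives no separate proof of this corollary but derives it directly from the slicing computation in the proof of Lemma~\ref{lem:range}, i.e., writing $\mathcal{G}(\cdot,\hat{x})=\sqrt{2\pi}\,\mathcal{F}g(\cdot,\hat{x})$ with $g(\xi,\hat{x})=\int_{t_{\min}}^{t_{\max}}\int_{\Gamma(\xi+t,\hat{x})}f\,ds\,dt$ and reading off that $g$ vanishes once $\xi+t_{\min}>\sup(\hat{x}\cdot D)$ or $\xi+t_{\max}<\inf(\hat{x}\cdot D)$. Your bookkeeping of the $\sqrt{2\pi}$ factor and the analytic continuation in $\tau$ matches the paper's conventions, so there is nothing to add.
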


For any $y\in \R^3$ and $\epsilon>0$, define  the test function $\phi^{(\hat{x})}_{y, \epsilon}\in L^2(0, K )$  by
\be\label{Test}
\phi^{(\hat{x})}_{y, \epsilon}(k)=\frac{1}{T \,|B_\epsilon(y)|}\int_{t_{\min}}^{t_{\max}}\int_{B_\epsilon(y)}e^{-{\rm i} k  (\hat{x}\cdot z\rot{-}t)} dzdt,\qquad k\in(0, K ).
\en
where $|B_\epsilon(y)|=4/3\pi \epsilon^3$ denotes the volume of the ball $B_\epsilon(y)\subset \R^3$. As $\epsilon\rightarrow 0$,  there holds the convergence
\be\label{test}
\phi^{(\hat{x})}_{y, \epsilon}(k)\rightarrow
 \phi^{(\hat{x})}_{y}(k):=\frac{1}{T }\left(\int_{t_{\min}}^{t_{\max}}e^{\rot{{\rm i} k t}} dt\right) \;e^{-{\rm i} k  \hat{x}\cdot y}
%=\frac{{\rm i} }{k T } \left( e^{-{\rm i} k  t_{\max}}- e^{-{\rm i} k  t_{\min}}\right)\,e^{-{\rm i} k  \hat{x}\cdot y}.
\en

%In the special case that $\epsilon=0$ and $t_{\min}=t_{\max}=0$, the function $\phi^{(\hat{x})}_{y, \epsilon}$ becomes $e^{-{\rm i} k \hat{x}\cdot y}$.
Below we describe the supporting interval of the inverse Fourier transform of the test functions defined by \eqref{Test}.
\begin{lem} For $\epsilon>0$,
we have
\be\label{a1}
&&[\mathcal{F}^{-1} \phi^{(\hat{x})}_{y, \epsilon}](\xi)>0\quad \mbox{if}\quad \xi\in (\hat{x}\cdot y-\epsilon\rot{-t_{\max}},\; \hat{x}\cdot y+\epsilon\rot{-t_{\min}}),\\ \label{a2}
&&[\mathcal{F}^{-1} \phi^{(\hat{x})}_{y, \epsilon}](\xi)=0\quad \mbox{if}\quad \xi\notin (\hat{x}\cdot y-\epsilon\rot{-t_{\max}},\; \hat{x}\cdot y+\epsilon\rot{-t_{\min}}).
\en %If $\epsilon=0$, it holds that
%\be \label{a3}
%&&[\mathcal{F}^{-1} \phi^{(\hat{x})}_{y}](\xi)=
%\left\{\begin{array}{lll}
%\sqrt{2\pi}/T \quad &&\mbox{if}\quad \xi\in (\hat{x}\cdot y+t_{\min},\; \hat{x}\cdot y+t_{\max}), \\
%0 \quad&&\mbox{if  otherwise}.
%\end{array}\right.
%\en
\end{lem}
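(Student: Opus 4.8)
The plan is to compute the inverse Fourier transform $\mathcal{F}^{-1}\phi^{(\hat{x})}_{y,\epsilon}$ in closed form and read off its sign and support directly. Observe first that $\phi^{(\hat{x})}_{y,\epsilon}$ is exactly the transform $\mathcal{G}(\cdot,\hat{x})$ of Corollary \ref{lem:supp} in the special case $D=B_\epsilon(y)$ and $f\equiv \frac{1}{T\,|B_\epsilon(y)|}$, a positive constant. Corollary \ref{lem:supp}, combined with the identity $\hat{x}\cdot B_\epsilon(y)=(\hat{x}\cdot y-\epsilon,\hat{x}\cdot y+\epsilon)$ recorded in Section 1, already yields the support containment and hence \eqref{a2}: the inverse transform vanishes for $\xi$ outside $(\hat{x}\cdot y-\epsilon-t_{\max},\,\hat{x}\cdot y+\epsilon-t_{\min})$. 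The remaining, more delicate point is the strict positivity \eqref{a1} inside this interval, for which the abstract support statement is not enough and the explicit density is needed.

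To obtain it, I would first disentangle the spatial integral over the ball from the exponential by slicing $B_\epsilon(y)$ along the level sets of $z\mapsto\hat{x}\cdot z$. Since $|\hat{x}|=1$, the coarea formula gives, for every $k$,
$$\int_{B_\epsilon(y)}e^{-{\rm i}k\,\hat{x}\cdot z}\,dz=\int_{\R}e^{-{\rm i}k\eta}\,A(\eta)\,d\eta,\qquad A(\eta):=\pi\big(\epsilon^2-(\eta-\hat{x}\cdot y)^2\big)_+,$$
where $A(\eta)$ is the two-dimensional area of the planar section $\{z\in B_\epsilon(y):\hat{x}\cdot z=\eta\}$ and $(\cdot)_+$ denotes positive part. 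Inserting this into \eqref{Test}, absorbing the factor $e^{{\rm i}kt}$, and changing variables $\xi=\eta-t$ at fixed $t$, I obtain the Fourier representation
$$\phi^{(\hat{x})}_{y,\epsilon}(k)=\int_{\R}e^{-{\rm i}k\xi}\,G(\xi)\,d\xi,\qquad G(\xi):=\frac{1}{T\,|B_\epsilon(y)|}\int_{t_{\min}}^{t_{\max}}A(\xi+t)\,dt.$$
As $A$ is continuous and compactly supported, $G$ is continuous, nonnegative, and compactly supported, so $G\in L^1(\R)\cap L^2(\R)$; comparing with the conventions for $\mathcal{F}$ and $\mathcal{F}^{-1}$ fixed in Section 1 then identifies $\mathcal{F}^{-1}\phi^{(\hat{x})}_{y,\epsilon}=\sqrt{2\pi}\,G$, the inversion being legitimate by the integrability of $G$.

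With the closed form $\mathcal{F}^{-1}\phi^{(\hat{x})}_{y,\epsilon}(\xi)=\sqrt{2\pi}\,G(\xi)$ in hand, both claims follow by inspecting $G$. Nonnegativity of $A$ makes $G\geq 0$ everywhere, and $G(\xi)>0$ precisely when $A(\xi+t)>0$ on a set of $t$ of positive measure, i.e. when the open intervals $(t_{\min},t_{\max})$ and $(\hat{x}\cdot y-\epsilon-\xi,\,\hat{x}\cdot y+\epsilon-\xi)$ overlap. Writing out the overlap conditions $t_{\min}<\hat{x}\cdot y+\epsilon-\xi$ and $\hat{x}\cdot y-\epsilon-\xi<t_{\max}$ shows this occurs exactly for $\xi\in(\hat{x}\cdot y-\epsilon-t_{\max},\,\hat{x}\cdot y+\epsilon-t_{\min})$, giving \eqref{a1}, while outside the closed interval the two intervals are disjoint and $G=0$, reconfirming \eqref{a2}. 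I expect the genuinely delicate step to be this last one: the time averaging widens the purely spatial support $(\hat{x}\cdot y-\epsilon,\hat{x}\cdot y+\epsilon)$ of $A$ into the longer interval of length $2\epsilon+T$, and one must verify that no cancellation occurs across the averaging — which is precisely what the nonnegativity of the cross-sectional area $A$ secures. Tracking the $\sqrt{2\pi}$ normalizations and confirming $G\in L^1(\R)$ so that pointwise inversion holds are the routine bookkeeping points to handle with care.
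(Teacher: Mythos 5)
Your proposal is correct and follows essentially the same route as the paper: both rewrite $\phi^{(\hat{x})}_{y,\epsilon}$ as the Fourier transform of the nonnegative density $\xi\mapsto\frac{1}{T|B_\epsilon(y)|}\int_{t_{\min}}^{t_{\max}}\big(\text{area of }\{z\in B_\epsilon(y):\hat{x}\cdot z=\xi+t\}\big)\,dt$ and read off the support and strict positivity from the overlap of $(t_{\min},t_{\max})$ with the interval where the cross-section is nonempty. The only difference is cosmetic — you make the cross-sectional area explicit as $\pi(\epsilon^2-(\eta-\hat{x}\cdot y)^2)_+$ via the coarea formula, whereas the paper keeps it as the abstract surface integral $\int_{\Gamma(t,\hat{x})}ds(z)$ — and your bookkeeping of the $\sqrt{2\pi}$ normalization matches the paper's.
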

\begin{proof} As done in \eqref{G},  we can rewrite  $\phi^{(\hat{x})}_{y, \epsilon}$ as \rot{the Fourier transform of $g_\epsilon(\xi,\hat{x})$:}
\ben
\phi^{(\hat{x})}_{y, \epsilon}(\tau)=\int_{\R} e^{-{\rm i} \tau\xi}g_\epsilon(\xi, \hat{x})\,d\xi,
\quad
g_\epsilon(\xi,\hat{x})=\frac{1}{T |B_{\epsilon}(y)|}\int_{\xi\rot{+t_{\min}}}^{\xi \rot{+t_{\max}}} \int_{\Gamma(t, \hat{x})} ds(z)dt,
\enn
with $\G(t, \hat{x})=\{z\in B_\epsilon(y): \hat{x}\cdot z=t\}$. Hence, $\mathcal{F}^{-1} \phi^{(\hat{x})}_{y, \epsilon}=\sqrt{2\pi}\;g_\epsilon(\cdot,\hat{x})$.
Observing that
$$\sup\big(\hat{x}\cdot  B_\epsilon(y)\big)=\hat{x}\cdot y+\epsilon,\quad  \inf\big(\hat{x}\cdot  B_\epsilon(y)\big)=\hat{x}\cdot y-\epsilon,$$ we obtain $\eqref{a1}$ and $\eqref{a2}$ from the expression of $g_{\epsilon}(\cdot, \hat{x})$. %If $\epsilon=0$, there holds
%\ben
%\phi^{(\hat{x})}_{y}(k)=\frac{1}{T }\int_{t_{\min}}^{t_{\max}} e^{-{\rm i} k  (t+\hat{x}\cdot y)}\,dt
%=\frac{1}{T }\int_{\R} e^{-{\rm i} k \xi} g(\xi)\,d\xi,
%\enn
%where
%\ben
%g(\xi):=\left\{\begin{array}{lll}
%1\quad &&\mbox{if}\quad \xi\in (\hat{x}\cdot y+t_{\min},\; \hat{x}\cdot y+t_{\max}). \\
%0 \quad&&\mbox{if otherwise}.
%\end{array}\right.
%\enn
%Therefore, $[\mathcal{F}^{-1} \phi^{(\hat{x})}_{y}](\xi)=\sqrt{2\pi}\,g(\xi)/T $.
\end{proof}
Introduce the strip (see Figure \ref{strip})
\be\label{K}
K_D^{(\hat{x})}:=\{y\in \R^3: \inf(\hat{x}\cdot D)<\hat{x}\cdot y<\sup (\hat{x}\cdot D)\}\subset \R^3.
\en
\begin{figure}[htbp]
  \centering
    \includegraphics[scale=0.6]{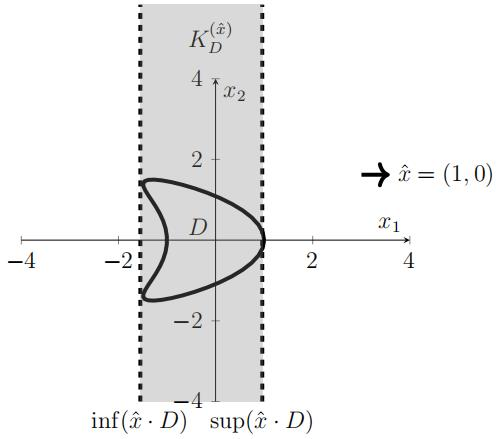}\caption{Illustration of the strip $K_D^{(\hat{x})}$ with $\hat{x}=(1,0)$.}
\label{strip}
\end{figure}

The set $K_D^{(\hat{x})}\subset \R^3$ represents the smallest strip containing $D$ and perpendicular to the vector $\hat{x}\in\s^2$. We shall establish a computational criterion for imaging $K_D^{(\hat{x})}$ from the multi-frequency far-field data $u^\infty(\hat{x},k)$ with $k\in(k_{\min}, k_{\max})$.
\begin{lem}\label{lem3.4}
(i) For $y\in K_D^{(\hat{x})}$,  there exists $\epsilon_0>0$ such that  $\phi^{(\hat{x})}_{y, \epsilon}\in \range (L_D^{(\hat{x})})$ for all $\epsilon\in(0, \epsilon_0)$.
(ii) If $y\notin K_D^{(\hat{x})}$, we have
 $\phi^{(\hat{x})}_{y, \epsilon}\notin \range (L_D^{(\hat{x})})$ for all $\epsilon>0$.
%\item[(iii)] \tcr{For $\epsilon=0$, we have
% $\phi^{(\hat{x})}_{y}\in \range (L_D^{(\hat{x})})$ if and only if \rot{$y\in K_D^{(\hat{x})}$.} }

\end{lem}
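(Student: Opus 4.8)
The plan is to prove the two assertions by opposite strategies. Part (ii) is a support-mismatch argument resting on the necessary condition of Corollary \ref{lem:supp}, while part (i) requires exhibiting an explicit preimage of $\phi^{(\hat{x})}_{y,\epsilon}$ under $L_D^{(\hat{x})}$. Both use the Fourier-side description of the range extracted from the proof of Lemma \ref{lem:range}: a function belongs to $\range(L_D^{(\hat{x})})$ if and only if it is the Fourier transform of some $g_f(\xi)=\int_{\xi+t_{\min}}^{\xi+t_{\max}}\int_{\Gamma(t,\hat{x})}f(y,t-\xi)\,ds(y)\,dt$ with $f\in X_D$, where $\Gamma(t,\hat{x})=\{y\in D:\hat{x}\cdot y=t\}$; since the Fourier transform is injective and $\mathcal{G}$ is analytic, matching $g_f$ is equivalent to matching $L_D^{(\hat{x})}f$ on $(0,K)$.

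For part (ii) I would argue by contradiction. If $\phi^{(\hat{x})}_{y,\epsilon}=L_D^{(\hat{x})}u$ for some $u\in X_D$, then Corollary \ref{lem:supp} forces $\mathcal{F}^{-1}\phi^{(\hat{x})}_{y,\epsilon}$ to vanish outside $(\inf(\hat{x}\cdot D)-t_{\max},\,\sup(\hat{x}\cdot D)-t_{\min})$. However, \eqref{a1}--\eqref{a2} assert that $\mathcal{F}^{-1}\phi^{(\hat{x})}_{y,\epsilon}$ is strictly positive on $(\hat{x}\cdot y-\epsilon-t_{\max},\,\hat{x}\cdot y+\epsilon-t_{\min})$. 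When $y\notin K_D^{(\hat{x})}$ we have $\hat{x}\cdot y\ge\sup(\hat{x}\cdot D)$ or $\hat{x}\cdot y\le\inf(\hat{x}\cdot D)$. In the former case the right endpoint $\hat{x}\cdot y+\epsilon-t_{\min}$ strictly exceeds $\sup(\hat{x}\cdot D)-t_{\min}$, so on a nonempty subinterval near that endpoint $\mathcal{F}^{-1}\phi^{(\hat{x})}_{y,\epsilon}$ is simultaneously positive and required to be zero; the latter case is symmetric using the left endpoints. The contradiction yields (ii) for every $\epsilon>0$.

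For part (i), since $y\in K_D^{(\hat{x})}$ gives $\inf(\hat{x}\cdot D)<\hat{x}\cdot y<\sup(\hat{x}\cdot D)$, I would fix $\epsilon_0>0$ with $[\hat{x}\cdot y-\epsilon_0,\hat{x}\cdot y+\epsilon_0]\subset(\inf(\hat{x}\cdot D),\sup(\hat{x}\cdot D))$ and, for $\epsilon\in(0,\epsilon_0)$, produce the source by the time-independent separable ansatz $u(z,s)=F(\hat{x}\cdot z)$, with $F(t)=A_B(t)/(T\,|B_\epsilon(y)|\,|\Gamma(t,\hat{x})|)$ on the support of the ball's cross-sectional area $A_B(t)=|\{z\in B_\epsilon(y):\hat{x}\cdot z=t\}|$ and $F\equiv 0$ elsewhere. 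Because $\hat{x}\cdot y=t$ on $\Gamma(t,\hat{x})$, the inner surface integral in $g_u$ collapses to $F(t)\,|\Gamma(t,\hat{x})|$, the cross-sectional areas cancel, and one recovers $g_u(\xi)=\frac{1}{T|B_\epsilon(y)|}\int_{\xi+t_{\min}}^{\xi+t_{\max}}A_B(t)\,dt=g_\epsilon(\xi)$, so that $L_D^{(\hat{x})}u=\phi^{(\hat{x})}_{y,\epsilon}$.

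The delicate step, and the one where the hypotheses truly enter, is verifying $u\in X_D$, i.e. $F(\hat{x}\cdot\,\cdot)\in L^2(D)$. By the coarea formula this reduces to the finiteness of $\int A_B(t)^2/|\Gamma(t,\hat{x})|\,dt$, which in turn requires a positive lower bound for the cross-sectional area $|\Gamma(t,\hat{x})|$ over the support of $A_B$. Here I would use that the compact slab $[\hat{x}\cdot y-\epsilon_0,\hat{x}\cdot y+\epsilon_0]$ lies in the open projection interval: for each such interior level the open connected set $D$ meets the hyperplane $\{\hat{x}\cdot z=t\}$ in a nonempty relatively open set of positive two-dimensional measure, and $t\mapsto|\Gamma(t,\hat{x})|$ is lower semicontinuous, hence attains a positive minimum on the compact slab. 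As $A_B$ is bounded with the same compact support, the integral converges and $u\in X_D$. I expect this integrability estimate, rather than the algebraic identity $g_u=g_\epsilon$, to be the main obstacle.
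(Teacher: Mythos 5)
Your proposal is correct, and part (ii) is essentially the paper's own argument: assume $\phi^{(\hat{x})}_{y,\epsilon}=L_D^{(\hat{x})}f$, extend by analyticity, and play the positivity of $\mathcal{F}^{-1}\phi^{(\hat{x})}_{y,\epsilon}$ on $(\hat{x}\cdot y-\epsilon-t_{\max},\,\hat{x}\cdot y+\epsilon-t_{\min})$ against the support constraint of Corollary \ref{lem:supp}; the paper packages the contradiction as the inclusion of intervals forcing $\inf(\hat{x}\cdot D)+\epsilon\leq\hat{x}\cdot y\leq\sup(\hat{x}\cdot D)-\epsilon$, which is the same computation you do endpoint by endpoint. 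For part (i), however, you take a genuinely different route. The paper exploits the observation that $\phi^{(\hat{x})}_{y,\epsilon}$ depends on $y$ only through $\hat{x}\cdot y$ (translation of the ball parallel to the hyperplane $\{\hat{x}\cdot z=\hat{x}\cdot y\}$ leaves the test function unchanged), picks a point $z\in D$ with $\hat{x}\cdot z=\hat{x}\cdot y$ and $B_\epsilon(z)\subset D$ for $\epsilon<\epsilon_0$, and takes the constant density $u=\bigl(|B_\epsilon(z)|\,T\bigr)^{-1}$ on $B_\epsilon(z)\times(t_{\min},t_{\max})$; the preimage is then immediate from the definition of $L_D^{(\hat{x})}$ with no measurability or integrability issues. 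You instead redistribute the mass of the ball's cross-sections over the full slices $\Gamma(t,\hat{x})$ of $D$, which requires the lower-semicontinuity argument giving a positive lower bound for $|\Gamma(t,\hat{x})|$ on the compact slab — correct, but this is exactly the technical overhead the paper's recentering trick avoids. What your version buys is a slightly weaker constraint on $\epsilon_0$: you only need the interval $[\hat{x}\cdot y-\epsilon_0,\hat{x}\cdot y+\epsilon_0]$ to fit inside $(\inf(\hat{x}\cdot D),\sup(\hat{x}\cdot D))$, whereas the paper needs a ball of radius $\epsilon_0$ to fit inside $D$ at that level, which may force a smaller $\epsilon_0$ for thin or irregular domains. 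Both arguments are valid; since the lemma only asserts existence of some $\epsilon_0>0$, the extra generality is not needed here.
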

\begin{proof}
($i$) If $\hat{x}\cdot y\in \hat{x}\cdot D$, there must exist some $z\in D$ and $\epsilon_0>0$ such that  $\hat{x}\cdot y=\hat{x}\cdot z$ and  $B_\epsilon(z)\subset D$ for all $\epsilon\in(0, \epsilon_0)$.  Moreover, we have
$\phi^{(\hat{x})}_{y, \epsilon}=\phi^{(\hat{x})}_{z, \epsilon}$.
Setting
\ben
u(x, t):=\left\{\begin{array}{lll}
\frac{1}{|B_\epsilon(z)|\;T },\quad &&\mbox{if}\; x\in B_\epsilon(z),  t\in (t_{\min}, t_{\max}),\\
0\quad&&\mbox{if otherwise}.
\end{array}\right.
\enn
It is obvious that $u(x,t)\in L^2(D\times (t_{\min}, t_{\max})$. By the definition of $L_D^{(\hat{x})}$ (see \eqref{def:L}), it is easy to see
 $\phi^{(\hat{x})}_{z, \epsilon}=L_D^{(\hat{x})}u$.

($ii$) Given $y\notin K_D^{(\hat{x})}$, we suppose on the contrary that
$\phi^{(\hat{x})}_{y, \epsilon}= L_D^{(\hat{x})}f$
with some $f\in L^2(D\times(t_{\min}, t_{\max}))$, i.e.,
\be\label{eq}
\phi^{(\hat{x})}_{y, \epsilon}(\tau)=\int_{t_{\min}}^{t_{\max}}\int_{D} e^{-{\rm i} \tau (\hat{x}\cdot z\rot{-}t)} f(z,t)\,dz dt,\qquad \tau\in (0, K ).
\en
By the analyticity in $\tau$, the above relation \rot{can be extended to $\tau\in \R$}. Hence, the supporting intervals of the inverse Fourier transform of both sides of \eqref{eq} must coincide.
Using \eqref{a1} and Corollary \ref{lem:supp}, we obtain
\ben
(\hat{x}\cdot y-\epsilon\rot{-t_{\max}},\; \hat{x}\cdot y+\epsilon\rot{-t_{\min}})\, \subset\,\left( \inf(\hat{x}\cdot D)\rot{-t_{\max}},\; \sup(\hat{x}\cdot D)\rot{-t_{\min}}  \right),
\enn
leading to \be\label{eq:ep}
\inf(\hat{x}\cdot D)+\epsilon\leq \hat{x}\cdot y \leq \sup(\hat{x}\cdot D)-\epsilon,\quad\mbox{for all}\; \epsilon>0.
\en
This implies that $y\in K_D^{(\hat{x})}$,
a contradiction to the assumption $y\notin K_D^{(\hat{x})}$. This proves  $\phi^{(\hat{x})}_{y, \epsilon}\notin \range (L_D^{(\hat{x})})$ for all $\epsilon>0$.
\end{proof}

\section{Indicator functions with multi-frequency far/near-field data}\label{sec4}
By \\
Lemma \ref{lem3.4}, the functions $\phi^{(\hat{x})}_{y, \epsilon}$ with a small $\epsilon>0$ can be taken as test functions to characterize $D$ through \eqref{indicator}.
We first consider the indicator function involving $\phi^{(\hat{x})}_{y, \epsilon}\rot{:}$
\be\label{indicator4}
W_\epsilon^{(\hat{x})}(y):=\left[\sum_{n=1}^\infty\frac{|\langle \phi^{(\hat{x})}_{y, \epsilon}, \psi_n^{(\hat{x})} \rangle|_{L^2(0, K )}^2}{ |\lambda_n^{(\hat{x})}|}\right]^{-1}, \; y\in \R^3.
\en
The analogue of \eqref{indicator4} was used in \cite{GS} for imaging the support of a wave-number-independent source function.
Combining Theorem \ref{range} and Lemma \ref{lem3.4} yields
\begin{theorem}\label{Th:factorization}
(i) If $y\in K_D^{(\hat{x})}$, there exists $\epsilon_0>0$ such that
$W_\epsilon^{(\hat{x})}(y)>0$ for all $\epsilon\in(0, \epsilon_0)$.
(ii)If $y\notin K_D^{(\hat{x})}$  there holds
$W_\epsilon^{(\hat{x})}(y)=0$ for all $\epsilon>0$.
\end{theorem}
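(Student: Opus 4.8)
The plan is to obtain both statements by transcribing Lemma \ref{lem3.4} through the Picard-type equivalence \eqref{indicator}, after observing that the series inside the definition \eqref{indicator4} of $W_\epsilon^{(\hat{x})}(y)$ is exactly the series governed by \eqref{indicator} with the choice of test function $\chi=\phi^{(\hat{x})}_{y,\epsilon}$. Thus the entire argument reduces to deciding, for each $y$, whether $\phi^{(\hat{x})}_{y,\epsilon}$ belongs to $\range(L_D^{(\hat{x})})$, and then reading off the size of the reciprocal series. No new analysis is needed beyond what is already assembled in Theorem \ref{range}, the range identity \eqref{RI}, and Lemma \ref{lem3.4}.

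For part (i) I would first fix $y\in K_D^{(\hat{x})}$ and invoke Lemma \ref{lem3.4}(i) to produce $\epsilon_0>0$ with $\phi^{(\hat{x})}_{y,\epsilon}\in\range(L_D^{(\hat{x})})$ for all $\epsilon\in(0,\epsilon_0)$. By the range identity \eqref{RI} this places $\phi^{(\hat{x})}_{y,\epsilon}$ in $\range\big((F_D^{(\hat{x})})_{\#}^{1/2}\big)$, so the equivalence \eqref{indicator} guarantees convergence,
$$\sum_{n=1}^\infty\frac{|\langle \phi^{(\hat{x})}_{y,\epsilon},\psi_n^{(\hat{x})}\rangle|^2}{|\lambda_n^{(\hat{x})}|}<\infty,$$
and taking reciprocals yields $W_\epsilon^{(\hat{x})}(y)>0$ \emph{provided} the series is strictly positive. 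For part (ii) I would fix $y\notin K_D^{(\hat{x})}$ and use Lemma \ref{lem3.4}(ii), which gives $\phi^{(\hat{x})}_{y,\epsilon}\notin\range(L_D^{(\hat{x})})=\range\big((F_D^{(\hat{x})})_{\#}^{1/2}\big)$ for every $\epsilon>0$; the contrapositive of \eqref{indicator} then forces the series to diverge, so that $W_\epsilon^{(\hat{x})}(y)=0$ under the convention $1/\infty=0$.

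The one point I expect to require genuine care, rather than routine bookkeeping, is the strict positivity claimed in part (i): mere finiteness of the Picard series only delivers $W_\epsilon^{(\hat{x})}(y)\ge 0$, and one must exclude the degenerate case in which every coefficient $\langle\phi^{(\hat{x})}_{y,\epsilon},\psi_n^{(\hat{x})}\rangle$ vanishes. I would rule this out by noting that $\phi^{(\hat{x})}_{y,\epsilon}\not\equiv 0$ while the operator $(F_D^{(\hat{x})})_{\#}=L\mathcal{T}_{\#}L^*$ is injective, since $\mathcal{T}_{\#}$ is coercive (the estimate established just before \eqref{RI}) and $L^*$ is one-to-one with $L$ of dense range. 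Injectivity of the compact self-adjoint operator $(F_D^{(\hat{x})})_{\#}$ makes its eigensystem $\{\psi_n^{(\hat{x})}\}$ complete, so at least one inner product is nonzero and the series is strictly positive. Everything else is a mechanical application of the stated lemmas.
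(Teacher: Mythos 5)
Your proposal is correct and follows essentially the same route as the paper, which proves Theorem \ref{Th:factorization} precisely by combining the range identity \eqref{RI} (from Theorem \ref{range}) with Lemma \ref{lem3.4} and reading off the Picard criterion \eqref{indicator}. The extra care you take over strict positivity in part (i) — ruling out the degenerate vanishing of all Fourier coefficients via injectivity of $(F_D^{(\hat{x})})_{\#}$, which indeed follows from the coercivity estimate $\langle (F_D^{(\hat{x})})_{\#}\phi,\phi\rangle\geq c\,\|L^*\phi\|^2>0$ for $\phi\neq 0$ — is a sound refinement that the paper leaves implicit.
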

Since $\phi^{(\hat{x})}_{y, \epsilon}$ convergences uniformly to $\phi^{(\hat{x})}_{y}$ over the finite wave-number interval $[k_{\min}, \\
k_{\max}]$, we shall use the limiting function $\phi^{(\hat{x})}_{y}$ in place of $\phi^{(\hat{x})}_{y, \epsilon}$ in the aforementioned indicator function. Consequently, we
introduce a new indicator function
\be\label{indicator1}
W^{(\hat{x})}(y):=\left[\sum_{n=1}^\infty\frac{|\langle \phi^{(\hat{x})}_{y}, \psi_n^{(\hat{x})} \rangle|_{L^2(0, K )}^2}{ |\lambda_n^{(\hat{x})}|}\right]^{-1}%\nonumber
\sim
\left[\sum_{n=1}^N\frac{|\langle \phi^{(\hat{x})}_{y}, \psi_n^{(\hat{x})} \rangle|_{L^2(0, K )}^2}{ |\lambda_n^{(\hat{x})}|}\right]^{-1}\en
for $y\in \R^3$,
where the integer $N\in \N$ is a truncation number.  Taking the limit $\epsilon\rightarrow 0$ in Theorem \ref{Th:factorization}, it follows that
\be\label{W}
W^{(\hat{x})}(y)=\left\{\begin{array}{lll}
\geq 0\quad&&\mbox{if}\quad y\in K_D^{(\hat{x})},\\
0\quad&&\mbox{if}\quad y\notin K_D^{(\hat{x})}.
\end{array}\right.
\en
Hence, the values of $W^{(\hat{x})}$ in the strip \rot{$K_D^{(\hat{x})}$} should be relatively bigger than those elsewhere.  %\rot{The above formula describes a necessary and sufficient condition for characterizing the strip
%$K_D^{(\hat{x})}$. Hence,}
%the indicator $W^{(\hat{x})}$
%is the characteristic function over the domain $K_D^{(\hat{x})}$.
 In the case of sparse observation directions $\{\hat{x}_j: j=1,2,\cdots, M\}$, we shall make use of the following indicator function:
\be\label{W}
W(y)\rot{= \left[\sum_{j=1}^M \frac{1}{W^{(\hat{x}_j)}(y) }\right]^{-1}}= \left[\sum_{j=1}^M\sum_{n=1}^N\frac{|\langle \phi^{(\hat{x}_j)}_{y}, \psi_n^{(\hat{x}_j)} \rangle|_{L^2(0, K )}^2}{ |\lambda_n^{(\hat{x}_j)}|}\right]^{-1}, \; y\in \R^3.
\en
Define the $\Theta$-convex hull of $D$ associated with $\{\hat{x}_j: j=1,2,\cdots, M\}$ as
%\ben
$\Theta_D:=\bigcap_{j=1,2,\cdots, M} K_D^{(\hat{x}_j)}$.
%\enn

\begin{theorem}\label{TH-hull}
We have $W(y)\geq0$ if $y\in \Theta_D$ and $W(y)=0$ if
$y\notin \Theta_D$.
%\begin{itemize}
%\item[(i)] If $y\in \Theta_D$, it holds that $W(y)>0$.
%\item[(ii)] If $y\notin \Theta_D$,  it holds that  $W(y)=0$.
%\end{itemize}
\end{theorem}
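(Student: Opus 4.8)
The plan is to reduce the combined-direction statement of Theorem \ref{TH-hull} to the single-direction result already established in Theorem \ref{Th:factorization} (via its limiting form \eqref{W}). The key observation is purely algebraic: the multi-direction indicator $W(y)$ is, by its very definition in \eqref{W}, the harmonic-type combination $W(y) = \left[\sum_{j=1}^M \frac{1}{W^{(\hat{x}_j)}(y)}\right]^{-1}$ of the single-direction indicators. Since each $W^{(\hat{x}_j)}(y) \geq 0$, the summands are nonnegative (interpreting $1/0 = +\infty$ in the degenerate case), so $W(y) \geq 0$ always holds, giving the first assertion immediately whenever $y \in \Theta_D$.

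First I would treat the vanishing case $y \notin \Theta_D$. By the definition $\Theta_D = \bigcap_{j=1}^M K_D^{(\hat{x}_j)}$, the point $y$ failing to lie in $\Theta_D$ means there exists at least one index $j_0 \in \{1,\dots,M\}$ with $y \notin K_D^{(\hat{x}_{j_0})}$. By part (ii) of Theorem \ref{Th:factorization} in its limiting form \eqref{W}, this forces $W^{(\hat{x}_{j_0})}(y) = 0$, equivalently $\sum_{n=1}^\infty |\langle \phi^{(\hat{x}_{j_0})}_{y}, \psi_n^{(\hat{x}_{j_0})}\rangle|^2 / |\lambda_n^{(\hat{x}_{j_0})}| = +\infty$, because by \eqref{indicator} this divergence is exactly the statement that $\phi^{(\hat{x}_{j_0})}_{y} \notin \range(L_D^{(\hat{x}_{j_0})})$. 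Since this single term in the double sum appearing in the bracket of \eqref{W} already diverges and all other terms are nonnegative, the full double sum $\sum_{j=1}^M \sum_{n=1}^\infty |\langle \phi^{(\hat{x}_j)}_{y}, \psi_n^{(\hat{x}_j)}\rangle|^2 / |\lambda_n^{(\hat{x}_j)}|$ is $+\infty$, whence $W(y) = 0$.

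For the positivity case $y \in \Theta_D$, the point lies in every strip $K_D^{(\hat{x}_j)}$, so part (i) of Theorem \ref{Th:factorization} (again in the limit $\epsilon \to 0$, as recorded in \eqref{W}) gives $W^{(\hat{x}_j)}(y) \geq 0$ for each $j$, i.e. each inner sum $\sum_{n} |\langle \phi^{(\hat{x}_j)}_{y}, \psi_n^{(\hat{x}_j)}\rangle|^2 / |\lambda_n^{(\hat{x}_j)}|$ is finite (possibly after truncation at $N$, as the indicator is evaluated numerically). The finite sum over $j$ of finitely many nonnegative finite quantities remains finite and nonnegative, so its reciprocal $W(y)$ is well defined and nonnegative, establishing $W(y) \geq 0$.

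The argument is essentially bookkeeping about which terms in the double series remain finite, so no single step is a genuine obstacle. The one point deserving care is the passage to the limit $\epsilon \to 0$: Theorem \ref{Th:factorization} is stated for the test functions $\phi^{(\hat{x})}_{y,\epsilon}$, whereas \eqref{W} uses the limiting functions $\phi^{(\hat{x})}_{y}$. The mildly delicate part is therefore justifying that the range-membership dichotomy survives this limit in the relevant direction; I would lean on the uniform convergence $\phi^{(\hat{x})}_{y,\epsilon} \to \phi^{(\hat{x})}_{y}$ on $[k_{\min}, k_{\max}]$ noted before \eqref{indicator1}, together with the characterization \eqref{indicator}, exactly as the excerpt already does when deriving \eqref{W} from Theorem \ref{Th:factorization}. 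Since the theorem statement tolerates the weak inequality $W(y) \geq 0$ (rather than strict positivity) on $\Theta_D$, the limiting behavior causes no difficulty, and the proof reduces to the algebraic combination described above.
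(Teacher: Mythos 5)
Your proof is correct and follows essentially the same route as the paper: both arguments reduce the claim to the single-direction dichotomy of Theorem \ref{Th:factorization} (in its limiting form \eqref{W}), noting that $y\notin\Theta_D$ forces one divergent inner series which makes the whole double sum infinite and hence $W(y)=0$, while $y\in\Theta_D$ keeps every inner series finite so that $W(y)\geq 0$. Your extra remark on the $\epsilon\to 0$ passage is a fair point of care, but it is handled in the paper exactly as you propose, so nothing further is needed.
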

\begin{proof}
If $y\in \Theta_D$, then $y\in K_D^{(\hat x_j)}$ for all $j=1,2,...,M$, yielding that $\hat x_j \cdot y \in \hat x_j \cdot D$. Hence, one deduces from  Theorem \ref{Th:factorization} that  $0\leq W^{(\hat x_j)}(y)<\infty$ for all $j=1,2,...,M$, implying that $0\leq W(y)$. On the other hand, if $y\notin \Theta_D$, there must exist some unit vector $\hat{x}_l$ such that $y\notin K_D^{(\hat x_l)}$. Again, using Theorem \ref{Th:factorization} we get $$
[W^{(\hat{x}_l)}(y)]^{-1}=\sum_{n=1}^\infty\frac{|\langle \phi^{(\hat{x}_l)}_{y}, \psi_n^{(\hat{x}_l)} \rangle|_{L^2(0, K )}^2}{ |\lambda_n^{(\hat{x}_l)}|}=\infty,$$ which proves $W(y)=0$ for $y\notin \Theta_D$.
\end{proof}
The values of $W(y)$ are expected to be large for $y\in \tcr{\Theta_D}$ and small for those $y\notin \Theta_D$. % because $\Theta_D\subset K_D^{(\hat{x}_j)}$ for all $j=1,2,\cdots, M$.
Below we shall provide a physical interpretation of the proposed inversion algorithm and build up connections with the time-dependent inverse source problems.   We first remark that, the above factorization method with multi-frequency data carries over to near-field measurements in three dimensions.  More precisely,
the proposed factorization method can be slightly modified to get an image of  the annular region
\be\label{K}
\tilde{K}_D^{(x)}:=\{y\in \R^3: \inf_{z\in D} |x-z|<|x-y|<\sup_{z\in D} |x-z|\}\subset \R^3
\en
for every fixed measurement position $|x|=R$. For this purpose we suppose $D\subset B_R$ for some $R>0$ and define the near-field operator
$\mathcal{N}^{(x)}:  L^2(0, K )\rightarrow L^2(0, K )$ by
\be \label{def:N}
&&(\mathcal{N}^{(x)}\phi)(\tau):=\int_{0}^{K } w(x, k_c+\tau-s)\,\phi(s)\,ds,\quad \rot{\tau \in (0,K),} %\\ \nonumber %\quad \tau\in(0, K )
%&=&\int_{0}^{K } \int_{D} \frac{e^{i (k_c+\tau-s) |x-y|}}{4\pi |x-y|}\,f(y, k_c+\tau-s )dy\,\phi(s)\,ds \\ \nonumber
%&=&\int_{0}^{K } \int_{D} \frac{e^{i (k_c+\tau-s) |x-y|}}{4\pi |x-y|} \left(\int_{t_{\min}}^{t_{\max}}
%S(y, t)e^{-i (k_c+\tau-s)t} dt\right)\, dy\,\phi(s)\,ds
\en
where $x\in \partial B_R$ is a measurement position and $w\in H^2(B_R)$ is the solution to the Helmholtz equation \eqref{Helmholtz}. Following the proof of Theorem \ref{Fac-F}, we obtain a factorization of the near-field operator as %follows:
%\ben
$\mathcal{N}^{(x)}=\tilde{L} \tilde{T}\tilde{L}^*,$
%\enn
where
 $\tilde{L}=\tilde{L}_D^{(x)}: X_D\rightarrow L^2(0, K )$ is defined by
\ben
(\tilde{L}u)(\tau)=\int_{t_{\min}}^{t_{\max}}\int_D e^{\rot{i\tau (|x-y|+t)}} u(y,t)\,dy dt,\qquad \tau\in (0, K )
\enn
for all $u\in X_D$,
and the middle operator $\tilde{T}: X_D\rightarrow X_D$ is again a coercive multiplication operator defined by
\ben
(\tilde{T}u)(y,t):=\frac{e^{{\rm i} k _c (|x-y|\rot{+}t)}}{\rot{\sqrt{32\pi^3}} |x-y|}\,u(y, t)\, S(y, t),\qquad |x|=R.
\enn
%Note the the adjoint of $\tilde{L}: L^2(0, K)\rightarrow X_D$ is defined by
%\ben
%(\tilde{L}^* \phi)(y, t):=\int_{0}^K e^{is (t-|x-y|)} \phi(s)\,ds \in X_D,\quad \phi\in  L^2(0, K).
%\enn
Choose the test function
\ben
\tilde{\phi}^{(x)}_{y,\epsilon}(k):=\frac{1}{T\,|B_\epsilon(y)|}\int_{t_{\min}}^{t_{\max}}\int_{B_\epsilon(y)} \frac{e^{ik (|x-z|\rot{+}t)}}{4\pi |x-z|}\,dz\,dt,
\enn
which tends uniformly to
\ben
\tilde{\phi}^{(x)}_{y}(k):=\frac{\rot{-}i\,e^{{\rm i} k  |x-y|}}{4\pi k |x-y|T }\left(e^{\rot{{\rm i} k  t_{\max}}}-e^{\rot{{\rm i} k  t_{\min}}}\right),
\enn
as $\epsilon\rightarrow 0$ for all $k\in[k_{\min}, k_{\max}]$.
Introduce the indicator function
\be \label{near-indicator}
\widetilde{W}^{(x)}(y):=\left[\sum_{n=1}^\infty\frac{|\langle \tilde{\phi}^{(x)}_{y}, \tilde{\psi}_n^{(x)} \rangle|_{L^2(0, K )}^2}{ |\rot{\tilde{\lambda}}_n^{(x)}|}\right]^{-1}, \qquad y\in \R^3,
\en
where $(\tilde{\lambda}_n^{(x)}, \tilde{\psi}_n^{(x)})$ is an eigensystem of the near-field operator $(\mathcal{N}^{(x)})_\#$.
As the counterpart to  \eqref{W}, one can
 show in the near-field case that
\begin{cor}\label{indicator-near}
We have
$\mbox{Range}\, [(\mathcal{N}^{(x)})_{\#}^{1/2}]=\mbox{Range}\,(\tilde{L}^{(x)})$ for all $x\in \pa B_R$. The indicator $\widetilde{W}^{(x)}\geq 0$ in $\tilde{K}_D^{(x)}$ and vanishes identically in the exterior of $\tilde{K}_D^{(x)}$.
% \ben
%\widetilde{W}^{(x)}(y)=\left\{\begin{array}{lll}
%\geq 0\quad&&\mbox{if}\quad y\in \tilde{K}_D^{(x)},\\
%0\quad&&\mbox{if}\quad y\notin \tilde{K}_D^{(x)}.
%\end{array}\right.
%\enn
\end{cor}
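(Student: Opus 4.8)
The plan is to reproduce, in the near-field geometry, the three-step argument that produced Theorem~\ref{Th:factorization}: apply the abstract range identity of Theorem~\ref{range} to the factorization $\mathcal{N}^{(x)}=\tilde{L}\tilde{T}\tilde{L}^*$ recorded above to obtain $\range[(\mathcal{N}^{(x)})_{\#}^{1/2}]=\range(\tilde{L}^{(x)}_D)$, then characterize $\range(\tilde{L}^{(x)}_D)$ by near-field test functions, and finally combine the two through Picard's criterion \eqref{indicator} and let $\epsilon\to0$.

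For the range identity I would verify hypotheses (i)--(ii) of Theorem~\ref{range} with $X=X_D$ and $Y=L^2(0,K)$. Compactness of $\tilde{L}^{(x)}_D$ follows exactly as in Lemma~\ref{com}: differentiating $\tilde{L}u$ in $\tau$ only pulls down the bounded factor ${\rm i}(|x-y|+t)$, so $\tilde{L}u\in H^1(0,K)$, which is compactly embedded in $L^2(0,K)$; denseness of the range reduces to injectivity of $(\tilde{L}^{(x)}_D)^*\phi(y,t)=\int_0^K e^{-{\rm i}\tau(|x-y|+t)}\phi(\tau)\,d\tau$, which again follows from the analyticity of the Fourier transform of the zero-extension of $\phi$ on the nondegenerate range of $\eta=|x-y|+t$. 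Here I assume $\overline{D}\subset B_R$, so that $0<d_0\le|x-y|\le 2R$ on $\overline{D}$ with $d_0=\mathrm{dist}(x,\overline{D})$. Since $S$ is real and $\ge c_0$, the real and imaginary parts of $\tilde{T}$ are multiplication by $\cos(k_c(|x-y|+t))\,S/(\sqrt{32\pi^3}|x-y|)$ and $\sin(k_c(|x-y|+t))\,S/(\sqrt{32\pi^3}|x-y|)$, which are one-to-one because their symbols vanish only on a Lebesgue-null set; and $\tilde{T}_{\#}$ is coercive because $|\cos|+|\sin|\ge1$ and $|x-y|\le 2R$ give $\langle\tilde{T}_{\#}u,u\rangle\ge \big(c_0/(2R\sqrt{32\pi^3})\big)\|u\|_{X_D}^2$.

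The geometric step replaces the hyperplane foliation $\{\hat{x}\cdot y=\mathrm{const}\}$ used in Section~\ref{sec3} by the concentric-sphere foliation $\{y:|x-y|=r\}$. Writing $\rho(y)=|x-y|$ and applying the coarea formula together with $|\nabla_y\rho|\equiv1$, I would rewrite, for $u\in X_D$ and $\eta=|x-y|+t$,
\ben
(\tilde{L}u)(\tau)&=&\int_{\R}e^{{\rm i}\tau\eta}\,\tilde{g}(\eta)\,d\eta,\qquad \tilde{g}(\eta)=\int_{t_{\min}}^{t_{\max}}\int_{\{y\in D:\,|x-y|=\eta-t\}}u(y,t)\,ds(y)\,dt,
\enn
so that, in analogy with Corollary~\ref{lem:supp}, $\mathrm{supp}\,\tilde{g}\subset\big(\inf_{z\in D}|x-z|+t_{\min},\ \sup_{z\in D}|x-z|+t_{\max}\big)$. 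The same computation applied to the test functions shows, as in \eqref{a1}, that $\mathcal{F}\tilde{\phi}^{(x)}_{y,\epsilon}$ is supported in and positive on $(|x-y|-\epsilon+t_{\min},\,|x-y|+\epsilon+t_{\max})$, whence $\mathcal{F}\tilde{\phi}^{(x)}_{y}$ is supported in $(|x-y|+t_{\min},\,|x-y|+t_{\max})$. With these two supporting intervals the near-field analogue of Lemma~\ref{lem3.4} follows: if $y\in\tilde{K}_D^{(x)}$ then, since $D$ is connected, $\rho(D)$ is an interval whose interior contains $|x-y|$, so there is $z_0\in D$ with $|x-z_0|=|x-y|$ and $B_{\epsilon_0}(z_0)\subset D$; taking $u_\epsilon$ equal to $\big(4\pi|x-z|\,T\,|B_\epsilon(z_0)|\big)^{-1}$ on $B_\epsilon(z_0)\times(t_{\min},t_{\max})$ and zero elsewhere reabsorbs the amplitude $1/(4\pi|x-z|)$ and gives $\tilde{\phi}^{(x)}_{z_0,\epsilon}=\tilde{L}^{(x)}_D u_\epsilon\in\range(\tilde{L}^{(x)}_D)$ for $\epsilon<\epsilon_0$. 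Conversely, if $y\notin\tilde{K}_D^{(x)}$ then $|x-y|$ lies outside $[\inf_D\rho,\sup_D\rho]$, so $\mathrm{supp}\,\mathcal{F}\tilde{\phi}^{(x)}_{y}$ cannot be contained in that of any $\mathcal{F}\tilde{L}^{(x)}_D f$, and hence $\tilde{\phi}^{(x)}_{y}\notin\range(\tilde{L}^{(x)}_D)$.

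Combining the range identity with Picard's theorem as in \eqref{indicator}, the quantity $\widetilde{W}^{(x)}(y)$—a reciprocal of a series of nonnegative terms—vanishes exactly when $\tilde{\phi}^{(x)}_{y}\notin\range(\tilde{L}^{(x)}_D)$; the two cases above then give $\widetilde{W}^{(x)}=0$ outside $\tilde{K}_D^{(x)}$, while the membership of the regularized $\tilde{\phi}^{(x)}_{z_0,\epsilon}$ inside, passed to the limit $\epsilon\to0$, yields $\widetilde{W}^{(x)}\ge0$ on $\tilde{K}_D^{(x)}$. I expect the main obstacle, and the only genuinely new point beyond the far-field proof, to be the handling of the singular amplitudes: the factor $1/|x-y|$ in $\tilde{T}$ must stay bounded and bounded away from zero (which is what forces $\overline{D}\subset B_R$) for the coercivity and injectivity needed by Theorem~\ref{range}, and the matching factor $1/(4\pi|x-z|)$ carried by the near-field test functions must be reabsorbed into $u_\epsilon$ so that $\tilde{\phi}^{(x)}_{z_0,\epsilon}$ truly lies in $\range(\tilde{L}^{(x)}_D)$. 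By contrast, replacing planes by spheres is harmless, since $|\nabla_y\rho|\equiv1$ keeps the coarea Jacobian equal to one.
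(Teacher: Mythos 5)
Your proposal is correct and follows exactly the route the paper intends: the paper itself only asserts that the far-field argument "carries over" to the near-field factorization $\mathcal{N}^{(x)}=\tilde{L}\tilde{T}\tilde{L}^*$, and your verification of the hypotheses of Theorem \ref{range} (using $\overline{D}\subset B_R$ to control the amplitude $1/|x-y|$), the sphere-foliation support analysis replacing Corollary \ref{lem:supp}, and the reabsorption of the factor $1/(4\pi|x-z|)$ into $u_\epsilon$ are precisely the details being suppressed. No gaps.
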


Now we want to bridge this near-field indicator functional with the time-domain signals of wave equations.
In the near-field case, we suppose that the sparse  data are given by $\{ {w}(x_j, k): x_j\in \partial B_R, k\in(k_{\min}, k_{\max})\}$, which can be considered as the \rot{inverse} Fourier transform of the time-dependent data $\{U(x_j, t): x_j\in \partial B_R, t\in(0, T)\}$ for some $T>2R+t_{\max}$.
The time-dependent \rot{signal} $t\mapsto U(x, t)$ with a fixed $x\in \partial B_R$ has a compact support, because the source term is compactly supported in $\rot{\overline{D}}\times \rot{[}t_{\min}, t_{\max}\rot{]}$. Physically, this can be  explained by the Huygens principle in 3D.
Since the wave speed has been normalized to be one, it is not difficult to observe that the arrival time point $T_{arr}$ and the terminal time point $T_{ter}$ of the signal recorded by the sensor at $|x|=R$ are %(see Figure \ref{distance})
\ben
T_{arr}=t_{\min}+\inf_{z\in D} |x-z|,\qquad T_{ter}=t_{\max}+\sup_{z\in D} |x-z|,
\enn
respectively, where $\rot{[}t_{\min}, t_{\max}\rot{]}$ represents the duration period for source radiating. This explains why the minimum distance and maximum distance between a measurement position $x$ and our target $D$
 (which are equivalent to the annulus $\tilde{K}_D^{(x)}$)  can be recovered from the multi-frequency near-field data at $|x|=R$.

\section{Discussions on source functions with two disconnected supports}\label{mul-compo}
In this section we discuss the factorization method for imaging the support of a wave-number-dependent source term with two disconnected components. For simplicity we only consider the far-field measurement data at multi-frequencies.
Suppose that $D=D_1\cup D_2\subset \mathbb R^3$ contains two disjoint sub-domains $D_j$ ($j=1,2$) which can be separated by some plane. The indicator function (4.2) can be used to image the strip $K_D^{(\hat{x})}$ if the multi-frequency data are observed at the direction $\hat{x}$. With the data for all observation directions the convex hull of $D$  can be recovered from the indicator (4.4). Physically, it would be more interesting to determine $\mbox{ch}(D_j)$ for each $j=1,2$, whose union is usually only a subset of $\mbox{ch}(D)$.
Analogously to Lemma 3.3 and Theorem 4.1,  we can prove the following results.
% Lemma \ref{lem3.4} and Theorem \ref{Th:factorization},  we can prove the following results.
\begin{cor} Let $\hat{x}\in \s^2$ be fixed.
\begin{itemize}
\item[(i)] For $y\in{K_{D_1}^{(\hat{x})}}\cup {K_{D_2}^{(\hat{x})}} $,  we have $\phi^{(\hat{x})}_{y,\epsilon}\in \range (L_D^{(\hat{x})})$ for all $\epsilon\in(0, \epsilon_0)$ with some $\epsilon_0>0$.
\item[(ii)] If $y\notin {K_{D_1}^{(\hat{x})}}\cup {K_{D_2}^{(\hat{x})}} $, we have
$\phi^{(\hat{x})}_{y,\epsilon}\notin \range (L_D^{(\hat{x})})$ for all $\epsilon>0$,
 provided one of the following conditions holds
\be\label{ab}
\quad\quad \quad\;(a)\; \inf (\hat{x}\cdot D_2)-\sup (\hat{x}\cdot D_1)>T;\; (b) \; \inf (\hat{x}\cdot D_1)-\sup (\hat{x}\cdot D_2)>T .
\en
\item[(iii)] Let the indicator function $W^{(\hat{x})}$ be defined by (4.4). Under one of the conditions in \eqref{ab} it holds that
\ben
W^{(\hat{x})}(y)=\left\{\begin{array}{lll}
\geq 0\quad&&\mbox{if}\quad y\in  {K_{D_1}^{(\hat{x})}}\cup {K_{D_2}^{(\hat{x})}} ,\\
0\quad&&\mbox{if}\quad y\notin ({K_{D_1}^{(\hat{x})}}\cup {K_{D_2}^{(\hat{x})}} ).
\end{array}\right.
\enn
\end{itemize}
\end{cor}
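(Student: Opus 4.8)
The plan is to follow the single-component arguments of Lemma \ref{lem3.4} and Theorem \ref{Th:factorization}, using the separation hypothesis \eqref{ab} only to control the interaction between $D_1$ and $D_2$. The starting observation is that, since $D_1$ and $D_2$ are disjoint, every $f\in X_D$ splits as $f=f_1+f_2$ with $f_j:=f|_{D_j}\in X_{D_j}$, whence $L_D^{(\hat{x})}f = L_{D_1}^{(\hat{x})}f_1 + L_{D_2}^{(\hat{x})}f_2$. Applying Corollary \ref{lem:supp} to each summand, the inverse Fourier transform of any element of $\range(L_D^{(\hat{x})})$ is supported in $\overline{I_1}\cup\overline{I_2}$, where $I_j:=(\inf(\hat{x}\cdot D_j)-t_{\max},\,\sup(\hat{x}\cdot D_j)-t_{\min})$. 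This is the structural fact that replaces the single-support statement of Corollary \ref{lem:supp}.

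For part (i) no separation is needed. If $y\in K_{D_j}^{(\hat{x})}$ for some $j$, then $\hat{x}\cdot y$ lies strictly inside the interval $\hat{x}\cdot D_j$, so I can pick $z\in D_j$ with $\hat{x}\cdot z=\hat{x}\cdot y$ and $B_\epsilon(z)\subset D_j\subset D$ for all small $\epsilon$. The constant source $u$ supported on $B_\epsilon(z)\times(t_{\min},t_{\max})$ then yields $\phi_{y,\epsilon}^{(\hat{x})}=\phi_{z,\epsilon}^{(\hat{x})}=L_D^{(\hat{x})}u$, exactly as in Lemma \ref{lem3.4}(i).

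For part (ii) I would argue by contradiction: suppose $y\notin K_{D_1}^{(\hat{x})}\cup K_{D_2}^{(\hat{x})}$ yet $\phi_{y,\epsilon}^{(\hat{x})}=L_D^{(\hat{x})}f$. On the Fourier side this reads $\mathcal{F}^{-1}\phi_{y,\epsilon}^{(\hat{x})}=g_1+g_2$ with ${\rm supp}\,g_j\subset\overline{I_j}$, while by \eqref{a1}--\eqref{a2} the left-hand side is supported exactly on $\overline{I_y}$ and strictly positive there, with $I_y:=(\hat{x}\cdot y-\epsilon-t_{\max},\,\hat{x}\cdot y+\epsilon-t_{\min})$. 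The key point is that \eqref{ab} is precisely the hypothesis \eqref{condition-T} of Lemma \ref{lem:range}, and it forces $I_1$ and $I_2$ to be \emph{disjoint} (under (a) one has $\sup(\hat{x}\cdot D_1)-t_{\min}<\inf(\hat{x}\cdot D_2)-t_{\max}$). Since the connected interval $\overline{I_y}$ is contained in $\overline{I_1}\cup\overline{I_2}$, it must lie entirely in one of them, say $\overline{I_y}\subset\overline{I_1}$; comparing endpoints then gives $\inf(\hat{x}\cdot D_1)+\epsilon\le\hat{x}\cdot y\le\sup(\hat{x}\cdot D_1)-\epsilon$, i.e. $y\in K_{D_1}^{(\hat{x})}$, a contradiction. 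The symmetric alternative produces $y\in K_{D_2}^{(\hat{x})}$. Part (iii) is then immediate: combining (i), (ii), the range identity \eqref{RI} and Picard's criterion \eqref{indicator}, and letting $\epsilon\to0$ as in the passage to \eqref{W}, reproduces the reasoning of Theorem \ref{Th:factorization}.

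The genuinely new difficulty, and the step I expect to be the main obstacle, is ruling out the case in which $\hat{x}\cdot y$ falls in the \emph{gap} between $\hat{x}\cdot D_1$ and $\hat{x}\cdot D_2$: here a general $f\in X_D$ mixes both components, so the clean support argument of Lemma \ref{lem3.4}(ii) cannot be applied directly. The separation condition \eqref{ab} is exactly what makes $I_1$ and $I_2$ disjoint, and it is this disjointness, together with the connectedness of $\overline{I_y}$, that collapses the two-component problem back to the single-component situation. Without \eqref{ab} the intervals $I_1,I_2$ may overlap and a sample point in the gap might be representable, so the hypothesis cannot simply be dropped.
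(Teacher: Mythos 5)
Your proposal is correct and follows essentially the route the paper intends: the paper omits a detailed proof, remarking only that the result is proved ``analogously to Lemma \ref{lem3.4} and Theorem \ref{Th:factorization}'' and that under \eqref{ab} the inverse Fourier transform of $L_D^{(\hat{x})}f$ is supported in the two disjoint intervals $I_1$ and $I_2$, which is precisely the decomposition $L_D^{(\hat{x})}f=L_{D_1}^{(\hat{x})}f_1+L_{D_2}^{(\hat{x})}f_2$ and the connectedness-of-$\overline{I_y}$ argument you supply. Your identification of the gap case (where $\hat{x}\cdot y$ lies between $\hat{x}\cdot D_1$ and $\hat{x}\cdot D_2$) as the point where \eqref{ab} is genuinely needed is exactly the right reading of the hypothesis.
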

Note that the conditions in \eqref{ab} imply that $(\hat{x}\cdot D_1)\cap (\hat{x}\cdot D_2)=\emptyset$. Furthermore, the inverse Fourier transform of $L_D^{(\hat{x})} f$ with $f\in X_D$ is supported in the following two disjoint intervals (see Lemma \ref{lem1} below for a detailed proof)
\ben
\Big(\inf(\hat{x}\cdot D_1)-t_{\max},\, \sup(\hat{x}\cdot D_1)-t_{\min}\Big)\; \bigcup \;\Big(\inf(\hat{x}\cdot D_2)-t_{\max},\, \sup(\hat{x}\cdot D_2)-t_{\min}\Big).
\enn
For small $T>0$, there exists at least one observation directions $\hat{x}\in \s^2$ such that the relations in \eqref{ab} hold, because $D_1$ and $D_2$ can be separated by some plane according to our assumption. If the conditions \eqref{ab} hold for all observation directions $\hat{x}\in \s^2$, one can make use of the indicator function (4.4) to get an image of the set
 $\bigcap_{j=1,2,\cdots M} \{ K_{D_1}^{(\hat{x}_j)} \cup K_{D_2}^{(\hat{x}_j)}\}$, which is usually larger than $\mbox{ch}(D_1)\cup \mbox{ch}(D_2)$.
This means that our approach can only be used to recover partial information of  $\mbox{ch}(D_j)$,
when the source radiating period $T$ is sufficiently small in comparision with the distance between $D_1$ and $D_2$.
The numerical experiments performed in Section 5 confirm the above theory; see Figures 8 and 9.

Physically, the conditions in \eqref{ab} ensure that the time-dependent signals recorded at $\hat{x}$ has two disconnected supports which correspond to the radiated wave fields from $D_1$ and $D_2$, respectively. If one can
split the multi-frequency far-field patterns at a single observation direction, it is still possible to recover $\{ K_{D_1}^{(\hat{x})} \cup K_{D_2}^{(\hat{x})}\}$ even if the conditions in \eqref{ab} cannot be fulfilled. Below we prove that the multi-frequency far-field patterns excited by two disconnected source terms can be split under additional assumptions.

To rigorously formulate the splitting problem, we go back to the wave equation (1.1),
where $D=D_1\cup D_2$ contains two disjoint bounded and connected sub-domains $D_1$ and $D_2$. As seen in (1.3), the solution $U=G*S$ can be written explicitly as the convolution of the fundamental solution $G$ with the source term $S$.
Define $U_j:=G*S_j$ with $S_j:=S|_{D_j\times \R_+}$. It is obvious that
$U(x,t)=U_1(x,t)+U_2(x,t)$ for $(x,t)\in \R^3\times \R_+$.
In the frequency domain, we consider the time-harmonic source problem
\ben
\Delta u+k^2 u=-f_1(\cdot,k)-f_2(\cdot,k)\quad \mbox{in}\quad \R^3\times \R_+,
\enn
where $\mbox{supp} f_j(\cdot ,k)=D_j$ for any $k>0$  and
\ben
f_j(x,k):=\int_{t_{\min}}^{t_{\max}} S_j(x, t) e^{ik t} dt,\quad x\in D_j.
\enn
Let $u_j$ be the unique radiating solution to
\ben
\Delta u_j+k^2 u_j=-f_j(x,k)\qquad\mbox{in}\quad \R^3\times \R_+,\quad j=1,2.
\enn
Denote by $u^\infty(\hat{x}, k), u_j^\infty(\hat{x}, k)$ the far-field patterns of $u$ and $u_j$ at some fixed observation direction $\hat{x}=x/|x|$, respectively.
It is obvious that $u^\infty=u_1^\infty+u_2^\infty$, where
\be\label{FF}
u_j^\infty(\hat{x}, k)=\int_{t_{\min}}^{t_{\max}}\int_{D_j} S_j(y, t) e^{-ik (\hat{x}\cdot y-t)}\,dy dt,\quad k\in \R_+.
\en
%Since $S_j$ are real-valued, it holds that $u_j^\infty(-k,\hat{x})=\overline{u_j^\infty(\hat{x}, k)}$. Hence, the far-field data set for positive wavenumbers can be analytically extended to negative wavenumbers.
The splitting problem in the frequency domain can be formulation as follows: Given a fixed observation direction $\hat{x}\in\s^2$,
split $\{u_j^\infty(\hat{x}, k): k\in \R\}$ from the data $\{u^\infty(\hat{x}, k):k\in \R\}$ for $j=1,2$.

Set  $l_j:=\sup(\hat{x}\cdot D_j)-\inf(\hat{x}\cdot D_j)$ and $\Lambda_j=T+\ell_j$.
We make the following assumptions on the time-dependent source function $S(x,t)$.
%\ben
\begin{itemize}
\item[(i)] $S(x,t)\geq c_0>0$ for $(x,t)\in D\times (t_{\min}, t_{\max})$.
\item[(ii)] $S$ is analytic on $\overline{D}\times [t_{\min}, t_{\max}]$ and the boundary $\partial D$ is analytic.
\item[(iii)] Either $\inf(\hat{x}\cdot D_1)< \inf(\hat{x}\cdot D_2)$, or $\sup(\hat{x}\cdot D_1)> \sup(\hat{x}\cdot D_2)$.
\end{itemize}
\begin{lem}\label{lem1} Under the assumption (i), the supporting interval of  $\mathcal{F}^{-1}(u_j^\infty)$ is $I_j:=(\inf(\hat{x}\cdot D_j)-t_{\max}, \sup(\hat{x}\cdot D_j)-t_{\min} ) $ and  the function $t\mapsto (\mathcal{F}^{-1}u_j^\infty)(t)$ is positive
in $I_j$. Moreover,  $(\mathcal{F}^{-1}u_j^\infty)(t)$ is  analytic  in $t\in I_j$ under the additional assumption (ii).
\end{lem}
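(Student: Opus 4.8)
The plan is to represent $\mathcal{F}^{-1}(u_j^\infty)$ explicitly as a sliced integral of $S$, in the same spirit as the computation leading to \eqref{G} and Corollary \ref{lem:supp}, and then to read off all three assertions from that representation. Starting from \eqref{FF} and slicing $D_j$ by the level sets $\Gamma_j(s,\hat{x})=\{y\in D_j:\hat{x}\cdot y=s\}$ (the coarea factor equals one because $|\hat{x}|=1$), the substitution $\xi=\hat{x}\cdot y-t$ followed by Fubini gives
\[
u_j^\infty(\hat{x},k)=\int_{\R}e^{-{\rm i}k\xi}\,G_j(\xi,\hat{x})\,d\xi,\qquad
G_j(\xi,\hat{x}):=\int_{t_{\min}}^{t_{\max}}\int_{\Gamma_j(\xi+t,\hat{x})}S(y,t)\,ds(y)\,dt .
\]
Hence $(\mathcal{F}^{-1}u_j^\infty)(\xi)=\sqrt{2\pi}\,G_j(\xi,\hat{x})$, and it remains only to study $G_j$. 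This is the exact analogue of the reduction already carried out for $L_D^{(\hat{x})}$ in Lemma \ref{lem:range}.

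For the support, I would note that $\Gamma_j(s,\hat{x})=\emptyset$ unless $s\in[\inf(\hat{x}\cdot D_j),\sup(\hat{x}\cdot D_j)]$, so $G_j(\xi,\hat{x})\neq0$ forces $\xi+t\in[\inf(\hat{x}\cdot D_j),\sup(\hat{x}\cdot D_j)]$ for some $t\in(t_{\min},t_{\max})$; taking the union over such $t$ yields $\mbox{supp}\,G_j(\cdot,\hat{x})\subset\overline{I_j}$. The positivity on $I_j$ comes from assumption (i): since $S\ge c_0>0$ on $\overline{D_j}\times[t_{\min},t_{\max}]$,
\[
G_j(\xi,\hat{x})\ge c_0\int_{t_{\min}}^{t_{\max}}\big|\Gamma_j(\xi+t,\hat{x})\big|\,dt ,
\]
where $|\cdot|$ denotes surface measure. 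The elementary geometric fact I would isolate is that $|\Gamma_j(s,\hat{x})|>0$ for every $s$ strictly between $\inf(\hat{x}\cdot D_j)$ and $\sup(\hat{x}\cdot D_j)$: because $D_j$ is open and connected, any path in $D_j$ joining a point with $\hat{x}\cdot y<s$ to one with $\hat{x}\cdot y>s$ must meet the hyperplane $\{\hat{x}\cdot y=s\}$ at an interior point, around which a small ball contained in $D_j$ cuts the hyperplane in a set of positive measure. For $\xi\in I_j$ the set of $t\in(t_{\min},t_{\max})$ with $\xi+t\in(\inf(\hat{x}\cdot D_j),\sup(\hat{x}\cdot D_j))$ is a nonempty open interval, so the right-hand side is strictly positive; this simultaneously upgrades the support inclusion to the equality $\mbox{supp}\,G_j(\cdot,\hat{x})=\overline{I_j}$.

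For the analyticity under assumption (ii), I would write $G_j(\xi,\hat{x})=\int_{t_{\min}}^{t_{\max}}H_j(\xi+t,t)\,dt$ with $H_j(s,t):=\int_{\Gamma_j(s,\hat{x})}S(y,t)\,ds(y)$, and attempt to differentiate under the fixed $t$-integral. The task thus reduces to the analytic dependence of the slice integral $s\mapsto H_j(s,t)$ on the level $s$. At levels $s$ for which the hyperplane $\{\hat{x}\cdot y=s\}$ is transversal to $\partial D_j$, assumption (ii) permits an analytic parametrization of each of the finitely many components of $\Gamma_j(s,\hat{x})$, and with $S$ analytic the integral $H_j(\cdot,t)$ is real analytic there. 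Since $\partial D_j$ is compact and analytic, the tangential (critical) levels are finite in number, so this already gives analyticity of $G_j$ on each open subinterval of $I_j$ cut out by those critical levels.

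The main obstacle is the analyticity across the two interior critical levels $\xi=\inf(\hat{x}\cdot D_j)-t_{\min}$ and $\xi=\sup(\hat{x}\cdot D_j)-t_{\max}$, both of which lie in $I_j$: at these values the slice level $\xi+t$ reaches an endpoint of $[\inf(\hat{x}\cdot D_j),\sup(\hat{x}\cdot D_j)]$ for $t$ at an endpoint of the integration range, and the slice area $|\Gamma_j(\cdot,\hat{x})|$ degenerates to zero. The one-sided expansion of $H_j(\cdot,t)$ at such an endpoint vanishes only to finite order, so $H_j$ does not continue analytically by zero across it, and the danger is that $G_j$ inherits only finite smoothness at these two $\xi$-values. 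The crux is therefore to show that integration in $t$ restores full analyticity across them — handled via an analytic Morse normal form for $\hat{x}\cdot y$ near a tangency point of the analytic boundary and a careful tracking of the contributions of the vanishing slice — and I expect this matching at the two interior critical levels, rather than the transversal regime, to be the genuinely delicate point where analyticity of both $S$ and $\partial D_j$ (not mere smoothness) is indispensable.
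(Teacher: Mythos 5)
Your reduction of $\mathcal{F}^{-1}(u_j^\infty)$ to the sliced integral
\[
g_j(\xi)=\int_{t_{\min}}^{t_{\max}}\int_{\Gamma_j(\xi+t,\hat{x})}S(y,t)\,ds(y)\,dt,
\]
the resulting support inclusion, and the positivity argument are exactly what the paper does: its proof evaluates $g_j$ at $\xi=\inf(\hat{x}\cdot D_j)-t_{\max}+\epsilon$ and observes that the integrand is strictly positive over a nonempty range of slice levels. Your connectedness argument showing $|\Gamma_j(s,\hat{x})|>0$ for every interior level $s$ is a point the paper leaves implicit, so up to and including the positivity assertion your proposal is correct and follows the same route, if anything more carefully.

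The gap is in the analyticity part, and it is a gap you have half-diagnosed yourself. The paper disposes of this assertion in a single sentence (``the analyticity follows from the formula for $g_j$ under assumption (ii)''), whereas you correctly isolate the two interior levels $\xi=\inf(\hat{x}\cdot D_j)-t_{\min}$ and $\xi=\sup(\hat{x}\cdot D_j)-t_{\max}$ as the crux. But your remaining hope --- that integration in $t$ restores analyticity across them --- is unfounded. Take $D_j=B_1(0)\subset\R^3$, $\hat{x}=e_1$, $S\equiv 1$, $t_{\min}=0$, $t_{\max}=T<2$; these satisfy (i) and (ii). Then the slice area is $\pi(1-s^2)_+$ and
\[
g_j(\xi)=\int_{\max(\xi,-1)}^{\min(\xi+T,\,1)}\pi(1-u^2)\,du
\]
is a piecewise cubic in $\xi$ whose defining polynomial changes at $\xi=-1$ and $\xi=1-T$; a direct computation shows $g_j''$ jumps by $\mp 2\pi$ there, so $g_j$ is $C^1$ but not $C^2$, hence not analytic, at two interior points of $I_j=(-1-T,1)$. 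Integrating in $t$ a function with a corner at a moving breakpoint gains exactly one order of differentiability, never analyticity, so no Morse-normal-form argument can close this step: the final assertion of the lemma is simply false as stated. What your transversality discussion actually establishes (and what is true) is that $\mathcal{F}^{-1}u_j^\infty$ is piecewise analytic on $I_j$ with finitely many breakpoints. That weaker conclusion is not harmless downstream, since the subsequent splitting theorem propagates the vanishing of $\mathcal{F}^{-1}w_1$ across $I_1$ by analytic continuation, which stalls at the first breakpoint; that proof would need to be repaired as well.
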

\begin{proof} We carry out the proof following the ideas in the proof of Lemma 3.1.
The far-field expression \eqref{FF} can be rewritten as
\ben
u_j^\infty(\hat{x}, k)=\int_{\R}e^{-ik\xi}g_j(\xi)\,d\xi,\qquad j=1,2,
\enn
where
\be\label{g}
 \quad \quad \quad g_j(\xi)&:=&\int_{t_{\min}}^{t_{\max}} \int_{\Gamma_j(\xi+t)} S_j(y, t)ds(y) dt
= \int^{\xi+t_{\max}}_{\xi+t_{\min}} \int_{\Gamma_j(t)} S_j(y, t-\xi)ds(y) dt.
\en
Here $\Gamma_j(t)\subset D_j$ is defined as
$
\Gamma_j(t):=\{y\in D_j: \hat{x}\cdot y=t\}.
$
%Since $\G_j(t)=\emptyset$ for $t<\inf(\hat{x}\cdot D_j)$ or $t>\max(\hat{x}\cdot D_j)$, it is obvious that $g_j(\xi)=0$ for $\xi<\inf(\hat{x}\cdot D_j)+t_{\min}$ or $\xi>\sup(\hat{x}\cdot D_j)+t_{\max}$. On the other hand,
By the assumption (i) we deduce from \eqref{g} with $\xi=\inf(\hat{x}\cdot D_j)-t_{\max}+\epsilon$,  $\epsilon\in (0, \Lambda_j)$ that
\ben
g_j(\xi)&=&
 \int^{\inf(\hat{x}\cdot D_j)+\epsilon}_{\inf(\hat{x}\cdot D_j)+\epsilon-T} \int_{\Gamma_j(t)} S_j(y, t-\xi)ds(y) dt\\
 &=&\int^{\inf(\hat{x}\cdot D_j)+\epsilon}_{\inf(\hat{x}\cdot D_j)} \int_{\Gamma_j(t)} S_j(y, t-\xi)ds(y) dt\\
 &>& 0,
 \enn
because
\ben
&& t-\xi\in(t_{\max}-\epsilon, t_{\max})\quad\mbox{if}\quad t\in \Big(\inf(\hat{x}\cdot D_j), \inf(\hat{x}\cdot D_j)+\epsilon\Big),
%&&\rot{ t-\xi\in\Big(t_{\min}+t_{\max}-\inf(\hat{x}\cdot D_j)-\epsilon,  2t_{\max}-\inf(\hat{x}\cdot D_j)-\epsilon\Big)\quad\mbox{if}\quad t\in (t_{\min}, t_{\max}).}
\enn
and
\ben
\int_{\Gamma_j(t)} S_j(y, t-\xi)ds(y)=0\quad \mbox{if}\quad t<\inf(\hat{x}\cdot D_j)
\enn
due to the fact that $\Gamma_j(t)=\emptyset$.
Since $g_j$ coincides with the inverse Fourier transform of $u_j^\infty$ by a factor, this proves the first part of the lemma. The analyticity of  $(\mathcal{F}^{-1}u_j^\infty)(t)$ in $t\in I_j$ follows from \eqref{g} under the assumption (ii).
\end{proof}
Next we show that the multi-frequency far-field measurement data at a fixed observation direction can be uniquely split. Note that the splitting is obvious under the conditions in \eqref{ab}, because by Lemma \ref{lem1} the inverse Fourier transform of $u^\infty(\hat{x}, k)$ has two disconnected components.
\begin{thm}
Suppose that there are two time-dependent sources $S$ and $\tilde{S}$ with \mbox{supp}\;$\tilde{S}=\overline{\tilde{D}}\times [t_{\min}, t_{\max}]$ and $\tilde{D}=\tilde{D}_1\cup\tilde{D}_2$. Here the source function $\tilde{S}$ and its support $\overline{\tilde{D}}$ are also required to satisfy the assumptions $(i)$-$(iii)$. Let $\tilde{u}_j^\infty$ be defined by \eqref{FF} with $D_j, S_j$ replaced by $\tilde{D}_j$, $\tilde{S}_j:=\tilde{S}|_{\tilde{D}_j\times (t_{\min}, t_{\max})}$, respectively. Then the relation
\be\label{uinfinity}
\quad \quad \quad u^{\infty}(\hat{x}, k)=u_1^\infty(\hat{x}, k)+u_2^\infty(\hat{x}, k)=\tilde{u}_1^\infty(\hat{x}, k)+\tilde{u}_2^\infty(\hat{x}, k),\; k\in(k_{\min}, k_{\max})
\en implies that $u_j^\infty(\hat{x}, k)=\tilde{u}_j^\infty(\hat{x}, k)$ for $k\in(k_{\min}, k_{\max})$ and $j=1,2$.
\end{thm}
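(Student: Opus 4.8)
The plan is to transfer the identity \eqref{uinfinity} from the frequency domain to the spatial ``profile'' domain via the inverse Fourier transform, and then to exploit the combination of real-analyticity and strict positivity supplied by Lemma~\ref{lem1}. First, since each $u_j^\infty(\hat x,\cdot)$ and $\tilde u_j^\infty(\hat x,\cdot)$ is real-analytic in $k\in\R$, the identity \eqref{uinfinity}, known a priori only on $(k_{\min},k_{\max})$, extends to all $k\in\R$. Applying $\mathcal F^{-1}$ and setting $g_j:=\mathcal F^{-1}(u_j^\infty)$ and $\tilde g_j:=\mathcal F^{-1}(\tilde u_j^\infty)$, one obtains the pointwise identity $g_1+g_2=\tilde g_1+\tilde g_2$ on $\R$. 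By Lemma~\ref{lem1} each summand is continuous on $\R$, is real-analytic and strictly positive on its open supporting interval $I_j=(\inf(\hat x\cdot D_j)-t_{\max},\,\sup(\hat x\cdot D_j)-t_{\min})$ (respectively $\tilde I_j$), and vanishes outside.

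Next I would read off the geometry of the supports. Because all four profiles are nonnegative, no cancellation occurs in either sum, so $\mathrm{supp}(g_1+g_2)=\overline{I_1\cup I_2}$ coincides with $\mathrm{supp}(\tilde g_1+\tilde g_2)=\overline{\tilde I_1\cup\tilde I_2}$; in particular the two configurations share the same leftmost and rightmost edges. Assumption (iii) is exactly what singles out, on each side, a distinguished extreme component: after relabelling we may assume $\inf(\hat x\cdot D_1)<\inf(\hat x\cdot D_2)$ (if (iii) holds only through its $\sup$-alternative one argues symmetrically from the right edge), and likewise for $\tilde D$. Denoting the common left edge by $A$, one then has $a_1:=\inf(\hat x\cdot D_1)-t_{\max}=A=\tilde a_1$. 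On the initial sliver $(A,\min(a_2,\tilde a_2))$ only $g_1$, respectively $\tilde g_1$, is active, so that $g_1=g_1+g_2=\tilde g_1+\tilde g_2=\tilde g_1$ there; the identity theorem for real-analytic functions then propagates $g_1=\tilde g_1$ to the entire overlap $(A,\min(b_1,\tilde b_1))$ of their analyticity intervals.

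The decisive step, and the one I expect to be the main obstacle, is to rule out a mismatch of the right endpoints $b_1=\sup(\hat x\cdot D_1)-t_{\min}$ and $\tilde b_1$, i.e. to prevent analytic continuation from letting one leftmost bump ``outrun'' the other. The overlapping regime $I_1\cap I_2\neq\emptyset$, which is precisely the situation not covered by the separation condition \eqref{ab}, is genuinely delicate, since the two far-field components can no longer be told apart by their supports alone. The resolution is to couple continuity with the strict positivity of Lemma~\ref{lem1}: if, say, $b_1<\tilde b_1$, then $b_1$ lies in the open interval $\tilde I_1$, so $\tilde g_1(b_1)>0$; on the other hand $g_1=\tilde g_1$ on $(A,b_1)$ together with the continuity of $g_1$ and $g_1\equiv0$ to the right of $b_1$ forces $\tilde g_1(b_1)=g_1(b_1)=0$, a contradiction. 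The symmetric exclusion of $b_1>\tilde b_1$ then gives $b_1=\tilde b_1$, whence $g_1\equiv\tilde g_1$ on all of $\R$.

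Subtracting this equality from $g_1+g_2=\tilde g_1+\tilde g_2$ yields $g_2\equiv\tilde g_2$, and transforming back gives $u_j^\infty(\hat x,k)=\mathcal F(g_j)(k)=\mathcal F(\tilde g_j)(k)=\tilde u_j^\infty(\hat x,k)$ for all $k\in\R$, in particular on $(k_{\min},k_{\max})$, which is the assertion. It is worth stressing that the positivity hypothesis (i) is indispensable here rather than merely technical: without it, the ``sum of analytic bumps'' picture admits inequivalent splittings with identical total data, so the argument must genuinely use both the analyticity and the sign, together with the continuous vanishing of the profiles at the edges of their supports that Lemma~\ref{lem1} provides.
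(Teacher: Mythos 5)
Your proposal is correct and follows essentially the same route as the paper: extend \eqref{uinfinity} to all $k\in\R$ by analyticity, pass to inverse Fourier transforms, use assumption (iii) and the positivity in Lemma \ref{lem1} to isolate the extreme component on an initial sliver, propagate the equality $g_1=\tilde g_1$ by real-analyticity, and rule out a mismatch of the right endpoints by the strict positivity of $\mathcal{F}^{-1}\tilde u_1^\infty$ at $t^*=\sup(\hat x\cdot D_1)-t_{\min}$ versus the vanishing of $\mathcal{F}^{-1}u_1^\infty$ there. Your write-up even makes explicit the continuity argument at that endpoint, which the paper leaves implicit.
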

\begin{proof} By the analyticity of $u_j^\infty$, $\tilde{u}_j^\infty$ in $k\in \R$, the function $u^\infty$ can be analytically extended to the whole real axis.
Denote by $[T_{\min}, T_{\max}]$ the supporting interval of
the inverse Fourier transform of $u^\infty$ with respect to $k$. In view of assumption (iii) and  Lemma \ref{lem1},  we may suppose without loss of generality that $T_{\min}=\Lambda_{\min}-t_{\max}$ with
\ben
\Lambda_{\min}=\inf(\hat{x}\cdot D_1)=\inf(\hat{x}\cdot \tilde{D}_1)<
\inf(\hat{x}\cdot D_2),\quad \Lambda_{\min}<
\inf(\hat{x}\cdot \tilde{D}_2).
\enn
If otherwise, there must hold
$T_{\max}=\Lambda_{\max}-t_{\min}$ with
\ben
\Lambda_{\max}=\sup(\hat{x}\cdot D_1)=\sup(\hat{x}\cdot \tilde{D}_1)>
\sup(\hat{x}\cdot D_2),\quad \Lambda_{\max}>
\sup(\hat{x}\cdot \tilde{D}_2)
\enn
and the proof can be carried out similarly.

Define $w_j=u_j^\infty-\tilde{u}_j^\infty$ for $j=1,2$. Using \eqref{uinfinity}, we get $w_1(\hat{x}, k)=-w_2(\hat{x}, k)$ for all $k\in \R$. Hence,  their inverse Fourier transforms must also coincide, i.e., $[\mathcal{F}^{-1}w_1](t)=-
[\mathcal{F}^{-1}w_2](t)$ for all $t\in \R$.
Taking $\delta<\min\{\inf(\hat{x}\cdot D_2)-\Lambda_{\min}, \; \inf(\hat{x}\cdot \tilde{D}_2)-\Lambda_{\min} \}$. Again using Lemma \ref{lem1}, we obtain
\ben
0=[\mathcal{F}^{-1}w_2](t)=[\mathcal{F}^{-1}w_1](t)\quad\mbox{for all}\quad t\in (T_{\min}, T_{\min}+\delta)
\enn
because the interval $(T_{\min}, T_{\min}+\delta)$ lies in the exterior of the supporting intervals of both $\mathcal{F}^{-1} u_2^\infty$ and $\mathcal{F}^{-1} \tilde{u}_2^\infty$.
Combining this with the analyticity of  $[\mathcal{F}^{-1}w_1](t)$ in $t$, we get
\be\label{eq:2}
\quad\quad\quad[\mathcal{F}^{-1}w_1](t)=0\;\;\mbox{for all}\;\;t\in\Big(T_{\min}, \min\left\{\sup(\hat{x}\cdot D_1), \sup(\hat{x}\cdot \tilde{D}_1)\right\}-t_{\min}\Big).
\en
If  $\sup(\hat{x}\cdot D_1)<\sup(\hat{x}\cdot \tilde{D}_1)$, it is seen from Lemma \ref{lem1} that
\be\label{eq:1}
[\mathcal{F}^{-1} u_1^\infty](t^*)=0,\qquad
[\mathcal{F}^{-1} \tilde{u}_1^\infty](t^*)>0
\en
where
\ben
 t^*=\sup(\hat{x}\cdot D_1)-t_{\min}\in I_j=\Big( \inf(\hat{x}\cdot \tilde{D}_1)-t_{\max},  \sup(\hat{x}\cdot \tilde{D}_1)-t_{\min}\Big).
\enn

Obviously, the relations in \eqref{eq:1} contradicts the fact that  $[\mathcal{F}^{-1}w_1](t^*)=0$ by \eqref{eq:2}. This proves  $\sup(\hat{x}\cdot D_1)\geq\sup(\hat{x}\cdot \tilde{D}_1)$. The relation $\sup(\hat{x}\cdot D_1)\leq\sup(\hat{x}\cdot \tilde{D}_1)$ can be proved analogously. Hence, $\sup(\hat{x}\cdot D_1)=\sup(\hat{x}\cdot \tilde{D}_1):=\Lambda_{\max}$ and
\ben
[\mathcal{F}^{-1}w_1](t)=0\quad\mbox{for all}\quad t\in\Big(\Lambda_{\min}-t_{\max}, \Lambda_{\max}-t_{\min}\Big).
\enn
Using again Lemma \ref{lem1} we find that $\mathcal{F}^{-1}u_1^\infty$ and $\mathcal{F}^{-1}\tilde{u}_1^\infty$ also vanish for $t\notin (\Lambda_{\min}-t_{\max}, \Lambda_{\max}-t_{\min})$. Therefore, $w_1\equiv 0$ and $u_1^\infty\equiv \tilde{u}_1^\infty$, which implies $u_2^\infty\equiv \tilde{u}_2^\infty$.
%\enn
\end{proof}

\begin{rem} Once $u_j^\infty(\hat{x}, k)$ ($j=1,2$) can be computed from $u^\infty(\hat{x}, k)$, one can apply the factorization scheme proposed in Sections 2-4 to get an image of the strip $K_{D_j}^{(\hat{x})}$ for $j=1,2$. The numerical implementation of the multi-frequency far-field splitting at a single observation direction is beyond the scope of this paper. We refer to \cite{GHS}  for a numerical scheme splitting far-field patterns over all directions at a fixed frequency.
\end{rem}

\section{Numerical examples}\label{num}

In this section, we present a couple of numerical examples in $\R^2$ \rot{and $\R^3$} to test the proposed factorization method with far-field \rot{and near-field} measurements.  All numerical examples are implemented by MATLAB.

\subsection{\rot{Numerical implements with far-field measurements in $\R^2$}}

We first describe the reconstruction procedure  with  multi-frequency far-field data over a single or multiple observation directions. Unless otherwise stated, we always assume $k_{\min}=0$.  With such a choice  the far-field operator (\ref{def:F}) can be simplified to be
\be\label{def:F1}
(F^{(\hat x)}\phi)(\tau)=\int_0^{k_{\max}} w^{\infty}(\hat x, \tau-s)\phi(s)ds,\quad  L^2(0, k_{\max})\rightarrow L^2(0, k_{\max})
\en
by taking $k_c=0$ and $K =k_{\max}$; see Remark \ref{re3.1}.
In our numerical examples below, we consider $2N-1$ wave-number samples $w^{\infty}(\hat x, k_n), n=1,2,\cdots,N$, and $w^{\infty}(\hat x, -k_n), n=1,2,\cdots,N-1$ of the far field, where
%\ben
$k_n=(n-0.5)\Delta k$, $\Delta k:={k_{\max}}/{N}$.
%\enn
Using the $2N-1$ samples of the far field in (\ref{def:F1}) and applying the midpoint rule, we obtain from (\ref{def:F1}) that
\ben
(F^{(\hat x)}\phi)(\tau_n) \approx \sum_{m=1}^N\, w^{\infty}(\hat x, \tau_n-s_m)\phi(s_m)\Delta k
\enn
where $\tau_n:=n\Delta k$ and $s_m:=(m-0.5)\Delta k$, $n,m=1,2,\cdots,N$. Accordingly, a discrete approximation of $F^{(\hat x)}$ is given by the Toeplitz matrix
\ben \label{matF}
F^{(\hat x)}:=\begin{pmatrix}
w^{\infty}(\hat{x},k_1) &\hspace{-0.1em} \overline{w^{\infty}(\hat{x},k_1)} &\hspace{-0.1em} \cdots &\hspace{-0.1em} \overline{w^{\infty}(\hat{x},k_{N-2})}  &\hspace{-0.1em} \overline{w^{\infty}(\hat{x},k_{N-1})}  \\
w^{\infty}(\hat{x},k_2) &\hspace{-0.1em} w^{\infty}(\hat{x},k_1) &\hspace{-0.1em} \cdots &\hspace{-0.1em} \overline{w^{\infty}(\hat{x},k_{N-3})} &\hspace{-0.1em}\overline{w^{\infty}(\hat{x},k_{N-2})}   \\
\vdots &\hspace{-0.1em} \vdots  &\hspace{-0.1em}  &\hspace{-0.1em}\vdots &\hspace{-0.1em}\vdots \\
w^{\infty}(\hat{x},k_{N-1}) &\hspace{-0.1em} w^{\infty}(\hat{x},k_{N-2}) &\hspace{-0.1em}  \cdots &\hspace{-0.1em} w^{\infty}(\hat{x},k_1)&\hspace{-0.1em} \overline{w^{\infty}(\hat{x},k_1)}\\
w^{\infty}(\hat{x},k_N) &\hspace{-0.1em} w^{\infty}(\hat{x},k_{N-1}) &\hspace{-0.1em}  \cdots &\hspace{-0.1em} w^{\infty}(\hat{x},k_2)&\hspace{-0.1em} w^{\infty}(\hat{x},k_1)\\
\end{pmatrix} \Delta k
\enn
where $\overline{w^{\infty}(\hat{x},k_n)}=w^{\infty}(\hat{x},-k_n)$, $n=1,\cdots,N-1$ and $F^{(\hat x)}$ is a $N\times N$ complex matrix.
For any point $y\in \R^2$ we define the test function vector $\phi_{y}^{(\hat x)}\in \C^N$ from (\ref{test}) by
%Similarly, we discretize the test function $\phi_{y}^{(\hat x)}$ from (\ref{test}) by the test vector
\be \label{testn}
\phi_y^{(\hat{x})}:= \left(\frac{\rot{-}{\rm i} }{ {T}\tau_1} (e^{\rot{{\rm i}  \tau_1 T}}-1)\,e^{-{\rm i} \tau_1 \hat{x}\cdot y}, \;\cdots,\; \frac{\rot{-}{\rm i} }{ {T}\tau_N} (e^{\rot{{\rm i}  \tau_N T}}-1)\,e^{-{\rm i} \tau_N \hat{x}\cdot y}\right),
\en
where $T=t_{max}-t_{min}$.
Denoting by  $\left\{ (  {\tilde \lambda^{(\hat x)}_n}, \psi^{(\hat x)}_n): n=1,2,\cdots,N \right\}$ an eigen-system of the matrix $F^{(\hat x)}$,  \rot{one then} deduces that an eigen-system of the matrix $(F^{(\hat x)})_\#:= |\real (F^{(\hat x)})|+|\ima (F^{(\hat x)})|$ is $\left\{ ( \lambda^{(\hat x)}_n, \psi^{(\hat x)}_n): n=1,2,\cdots,N \right\}$ , where $ \lambda^{(\hat x)}_n:=|\real (\tilde \lambda^{(\hat x)}_n)| +|\ima (\tilde \lambda^{(\hat x)}_n)|$. %We    approximate the indicator function $W^{(\hat x)}$ in (\ref{indicator}) by
%\ben
%W^{(\hat{x})}(y):=\left[\sum_{n=1}^N\frac{\left| \phi^{(\hat{x})}_{y}\cdot \overline{\psi_n^{(\hat{x})} }\right|^2}{ |\lambda_n^{(\hat{x})}|}\right]^{-1}, \quad y\in \R^2,
%\enn
%where $\cdot$ denotes the inner product in $\R^N$.
%Accordingly,
A plot of $W^{(\hat{x})}(y)$ should yield a visualization of the strip $K_D^{(\hat{x})}$ (see \eqref{K} ) containing the source support. In the following numerical examples, the frequency band is taken as $(0,16\pi/6)$ with $k_{\max}=16\pi/6$, $N=16$ and $\Delta k=\pi/6$.  The wave-number-dependent source term $f(x, k)$ is supposed to be given by (\ref{fxt}). We always take $t_{\min}=0$ and $t_{\max}=T=0.1$ unless otherwise specified.

\subsubsection{One observation direction}

 We first consider reconstruction of the strip $K_D^{(\hat{x})}$ from the multi-frequency far-field data $w^\infty(\hat{x}, \pm k_n)$.  In Figure \ref{fig:1dir}, we show a visualization of  reconstructions of three sources supported on a kite-shaped domain at the observation direction $\hat{x}=(\cos\theta, \sin\theta)$ with the angle $\theta\in (0,2\pi]$. The time-dependent source functions are chosen as $S(x, t)$ satisfying the \rot{positivity} assumption \eqref{F}. We choose $S(x, t)=3 {(t+1)}$ and $\theta=\pi/4$ in Figure \ref{fig:1dir} (a); $S(x, t)=3x_1 {(t+1)}$ and $\theta=\pi/2$ in Figure \ref{fig:1dir} (b);  $S(x, t)=3(x_1^2+x_2^2- {4)(t+1)}$ and $\theta=3\pi/4$ in Figure \ref{fig:1dir} (c).  The boundary of  $D$ is also shown in the picture (pink-solid line). As predicted by our theoretical results, the reconstructions nicely approximate the smallest  strip $K_D^{(\hat{x})}$ perpendicular to the observation directions that contains the support. % ($T=0.1,N=16, k=\pi/6$)

\begin{figure}%[tbhp]
\centering
\subfigure[$S=3 {(t+1)}, \theta=\pi/4$]{
\includegraphics[scale=0.18]{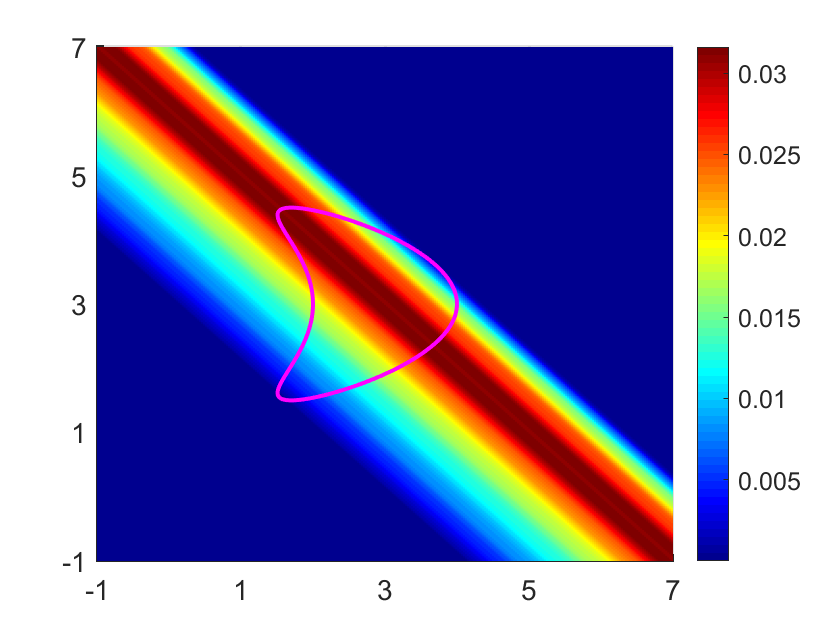}
}
\subfigure[$S=3x_1 {(t+1)}, \theta=\pi/2$ ]{
\includegraphics[scale=0.18]{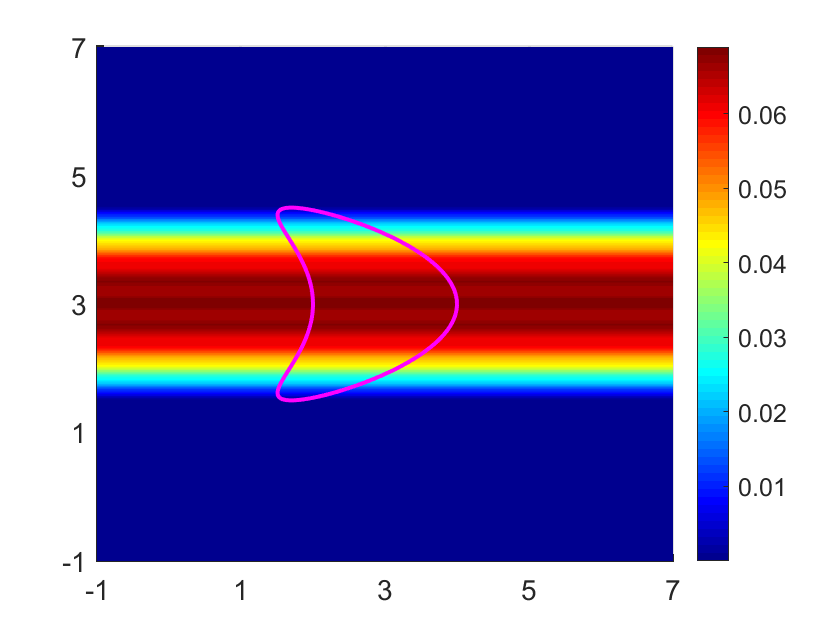}
}
\subfigure[$S=3(x_1^2+x_2^2- {4)(t+1)}, \theta=3\pi/4$]{
\includegraphics[scale=0.18]{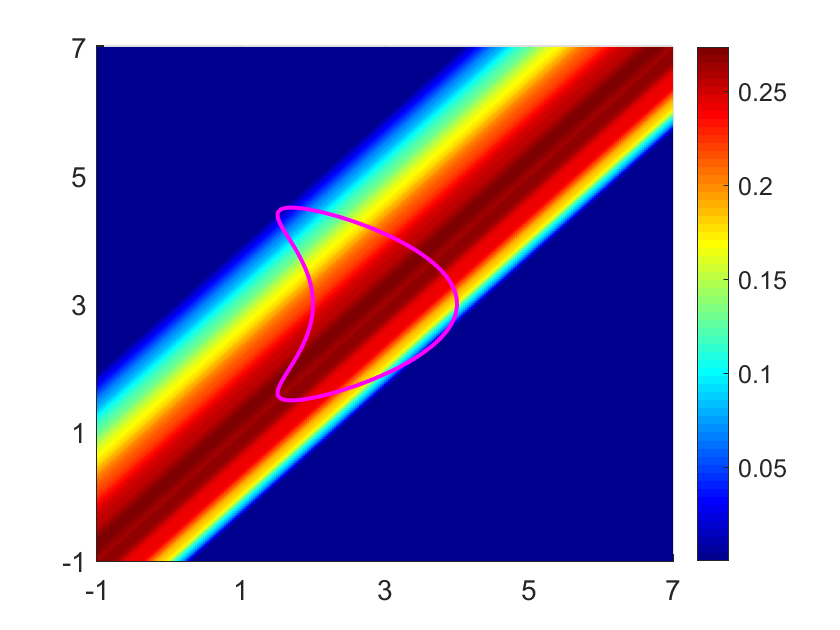}
}
\caption{Reconstructions using a single observation direction and multi-frequency far-field data for a kite-shaped  support. We choose $t_{\min}=0$ and $t_{\max}=T=0.1$.
} \label{fig:1dir}
\end{figure}

\begin{figure}[H]
\centering
\subfigure[$T=1$]{
\includegraphics[scale=0.18]{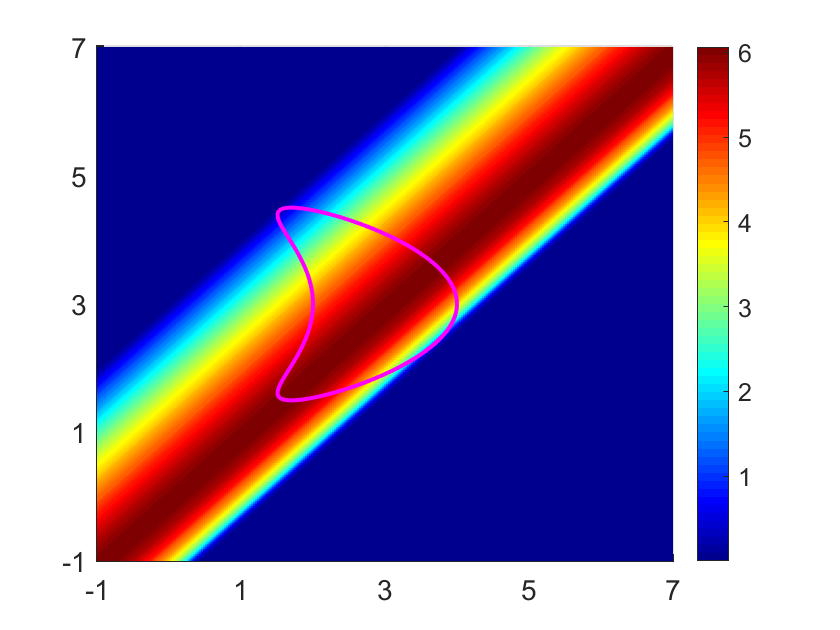}
}
\subfigure[$T=2$ ]{
\includegraphics[scale=0.18]{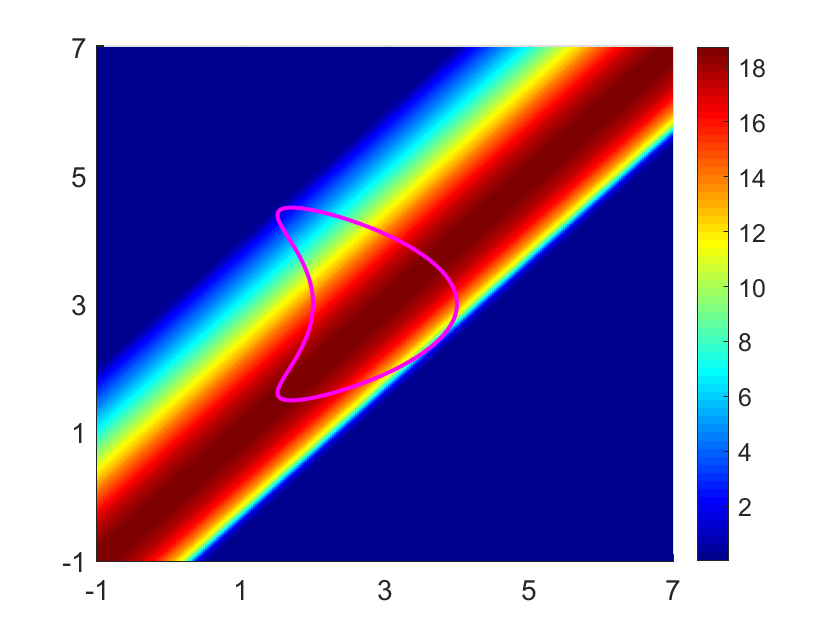}
}
\subfigure[$T=3$]{
\includegraphics[scale=0.18]{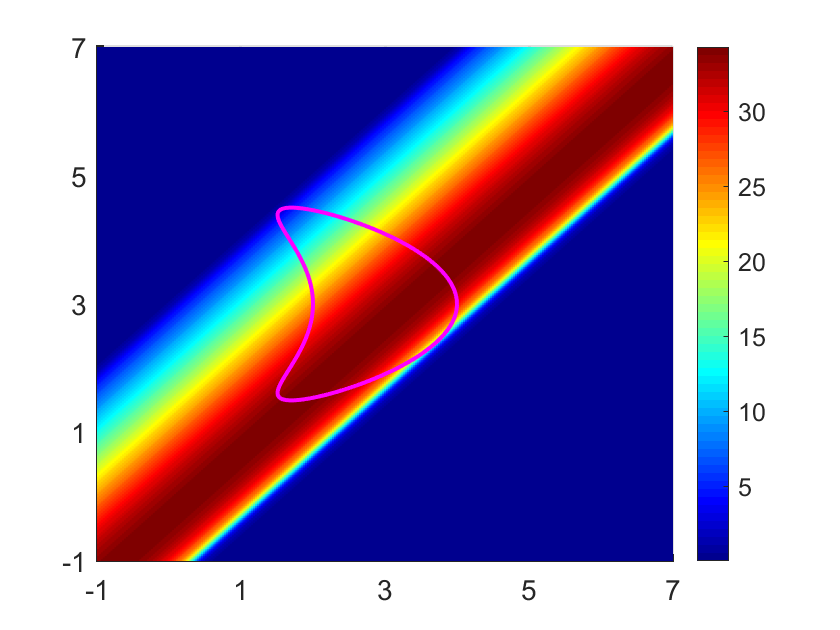}
}
\caption{ Reconstructions of a kite-shaped support with $S=3(x_1^2+x_2^2- {4)(t+1)}$ and $\theta=3\pi/4$ with different \rot{inverse} Fourier transform windows $(0, T)$.} \label{fig:0Tj}
\end{figure}
Next, we continue the numerical example for $S(x,t)=3(x_1^2+x_2^2- {4)(t+1)}$ and $\theta=3\pi/4$ in Figure \ref{fig:1dir} (c) but with different \rot{inverse} Fourier transform windows $(t_{\min}, t_{\max})$.   We take $t_{\min}=0, t_{\max} =T$ with $T=1,2,3$ in Figure \ref{fig:0Tj}. The \rot{inverse} Fourier transform window is taken to be $(t_{\min}, t_{\max})=  (T_0, T_0+T)$ in Figure \ref{fig:Tj}, with $T=0.1$ and $T_0=1, 2, 3$ . In the last case, one needs to discretize the  test vector (\ref{testn}). %is discretized as
%\ben
%\phi_y^{(\hat{x})}:= \left(\frac{{\rm i}}{ {T} \tau_1} (e^{-{\rm i}  \tau_1 (T_0+T)}-e^{-{\rm i}  \tau_1 T_0})\,e^{-{\rm i} \tau_1 \hat{x}\cdot y}, \;\cdots,\; \frac{{\rm i} }{ {T} \tau_n} (e^{-{\rm i}  \tau_n (T_0+T)}-e^{-{\rm i}  \tau_n T_0})\,e^{-{\rm i} \tau_n \hat{x}\cdot y}\right).% \in \C^N.
%\enn
It is  clearly shown that the source still lies in the smallest strip perpendicular to the observation direction. The numerical examples  in Figures \ref{fig:0Tj} and \ref{fig:Tj} show that \rot{our inversion algorithm is feasible for any $t_{\min}$ and $t_{\max}$.}

\begin{figure}[H]
\centering
\subfigure[$T_0=1, T=0.1$]{
\includegraphics[scale=0.18]{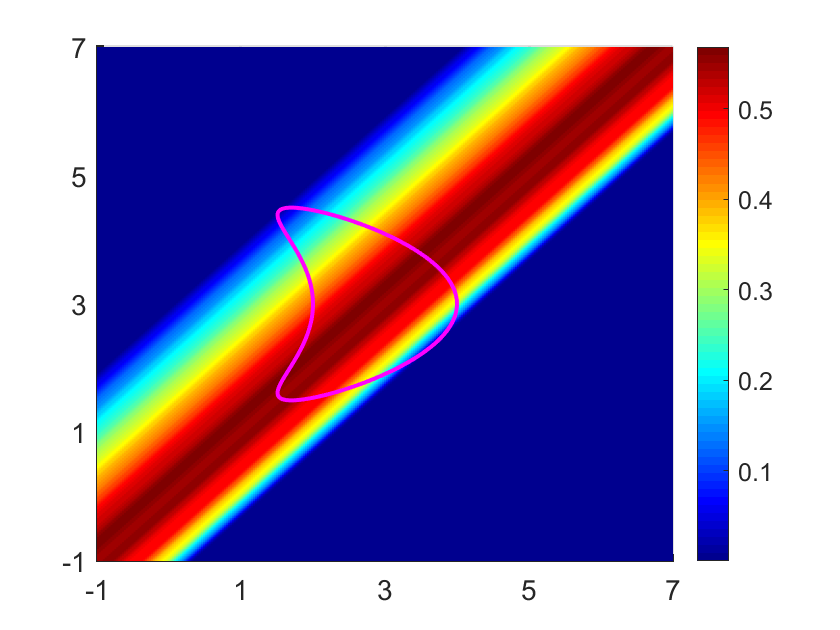}
}
\subfigure[$T_0=2, T=0.1$ ]{
\includegraphics[scale=0.18]{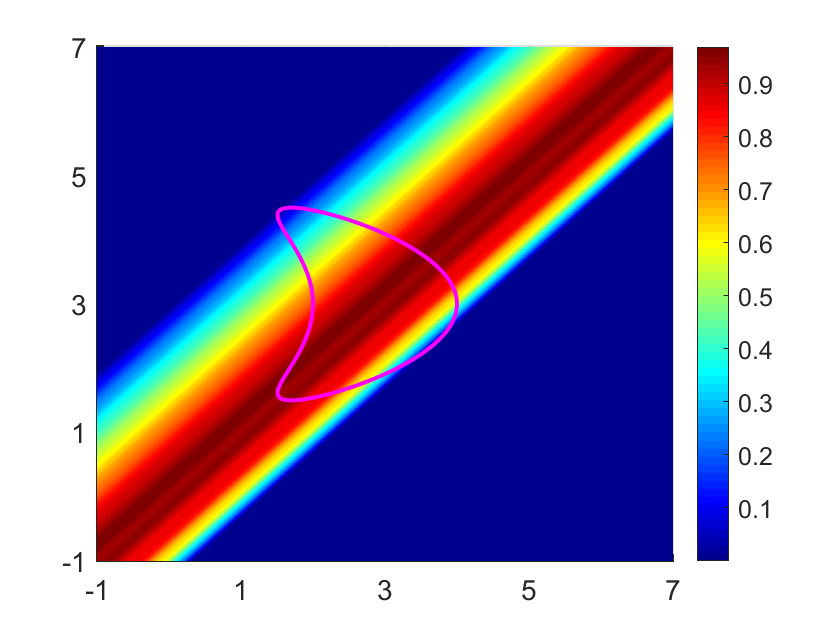}
}
\subfigure[$T_0=3, T=0.1$]{
\includegraphics[scale=0.18]{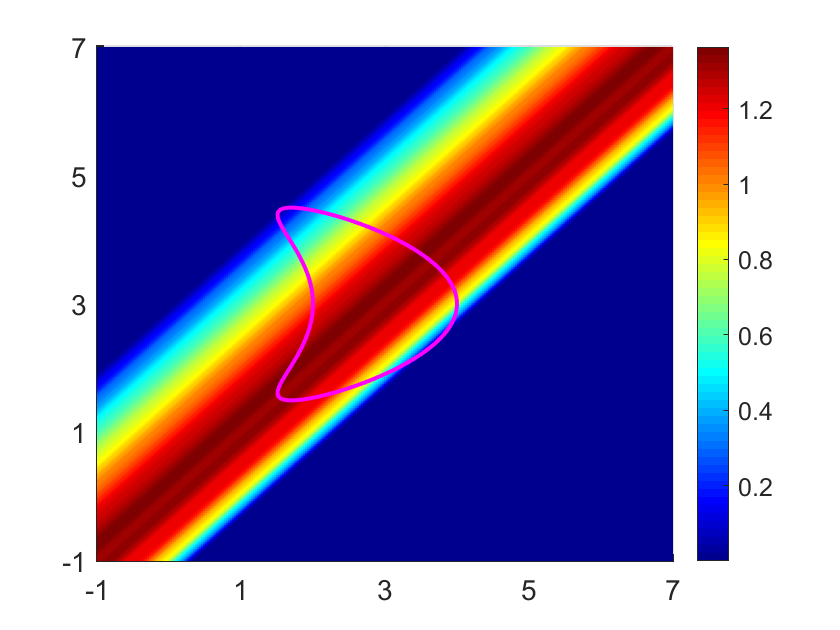}
}
\caption{Reconstructions of a kite-shaped support with $S=3(x_1^2+x_2^2- {4)(t+1)}$, $\theta=3\pi/4$ and with different \rot{inverse} Fourier transform windows of the form $(T_0, T_0+T)$.} \label{fig:Tj}
\end{figure}

\subsubsection{Multiple observation directions}

We present the reconstruction of a kite-shaped source using $M$ observation directions with the observation angles $\theta_m=\frac{m-1}{M}\pi$, $m=1,\cdots,M$. %Here, one should change the indicator function into
%\ben
%W^{(\hat{x})}(y):=\left[\sum _{m=1}^M\sum_{n=1}^N\frac{\left| \phi^{(\hat{x}_{m})}_{y}\cdot \overline{\psi_n^{(\hat{x}_{m})} }\right|^2}{ |\lambda_n^{(\hat{x}_{m})}|}\right]^{-1}, \quad y\in \R^2.
%\enn
%where the test function $\phi_y^{(\hat{x}_m)}$ is given
%by (\ref{testn}), and $\left\{(\lambda_n^{(\hat{x}_{m})} , \psi_n^{(\hat{x}_{m})}): n=1,\cdots, N\right\} $ denotes an eigensystem of the operator  {$(F^{(\hat{x}_m)})_\#$}.

We show in Figure \ref{fig:mdir} a visualization of the reconstructed source with $S(x, t)=3(x_1^2+x_2^2) {(t+1)}$. Since the observation directions are perpendicular to each other if $M=2$, the strips $K_D^{(\hat{x}_1)}$ and $K_D^{(\hat{x}_2)}$ are also perpendicular to each other as shown in Figure \ref{fig:mdir} (a). It is clear that intersection of the strips contains the source support in Figure \ref{fig:mdir} (a), (b) and (c) , which approximates the convex hull of the support. Of course the number of observation directions affects reconstruction qualities: the more the directions, the better the reconstructions.

\begin{figure}[H]
\centering
\subfigure[$M=2$]{
\includegraphics[scale=0.18]{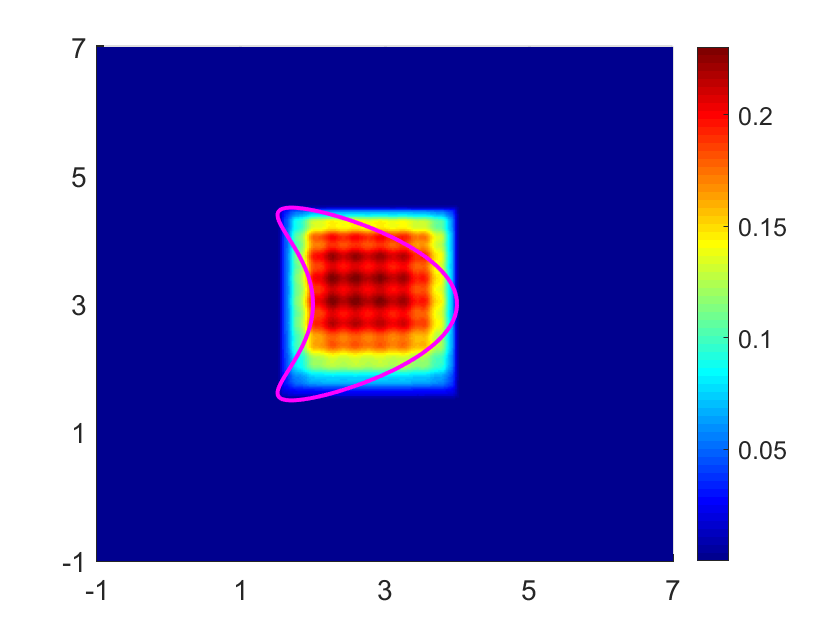}
}
\subfigure[$M=5$ ]{
\includegraphics[scale=0.18]{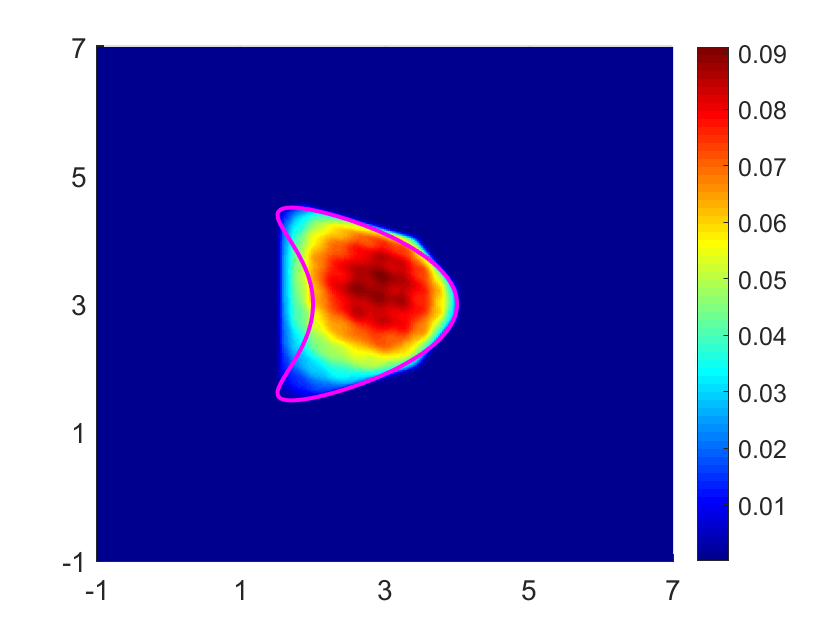}
}
\subfigure[$M=8$]{
\includegraphics[scale=0.18]{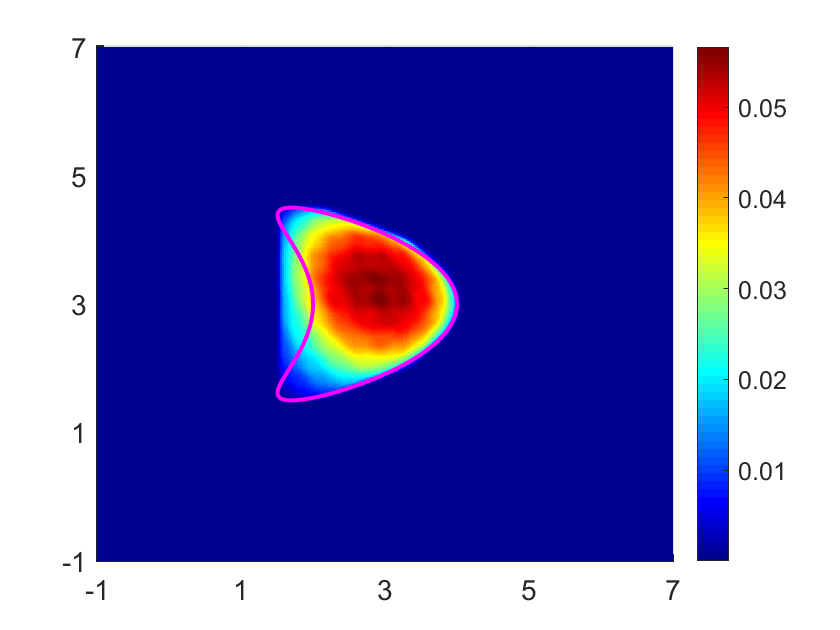}
}
\caption{Reconstructions of a kite-shaped  source for $S=3(x_1^2+x_2^2) {(t+1)}$ with $M$ observation directions. The \rot{inverse} Fourier transform window $(t_{\min}, t_{\max})$ is chosen  as  $t_{\min}=0$ and $t_{\max}=0.1$. } \label{fig:mdir}
\end{figure}

%\subsection{The sensitivity of $T$}

%To get an idea about the sensitivity of the algorithm with respect to $T$ in the source, we take different $T$ for testing.
We continue the numerical example in Figure \ref{fig:mdir} (c) by choosing different \rot{inverse} Fourier transform time windows $(0,T)$. The resulting reconstructions are shown in Figure \ref{fig:T} with three different choices $T=1, 5, 7$. The results are getting worse with increasing $T$, but they still contain useful information on the location and shape of the source \rot{support $D$}. In other numerical examples, we all take $T=0.1$ to get precise reconstructions.

\begin{figure}%[H]
\centering
\subfigure[$T=1$]{
\includegraphics[scale=0.18]{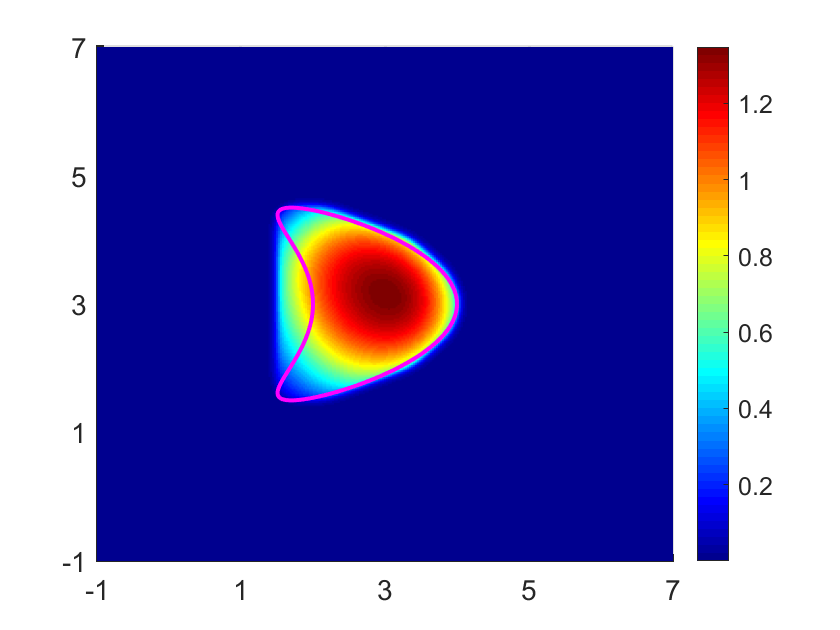}
}
\subfigure[$T=5$ ]{
\includegraphics[scale=0.18]{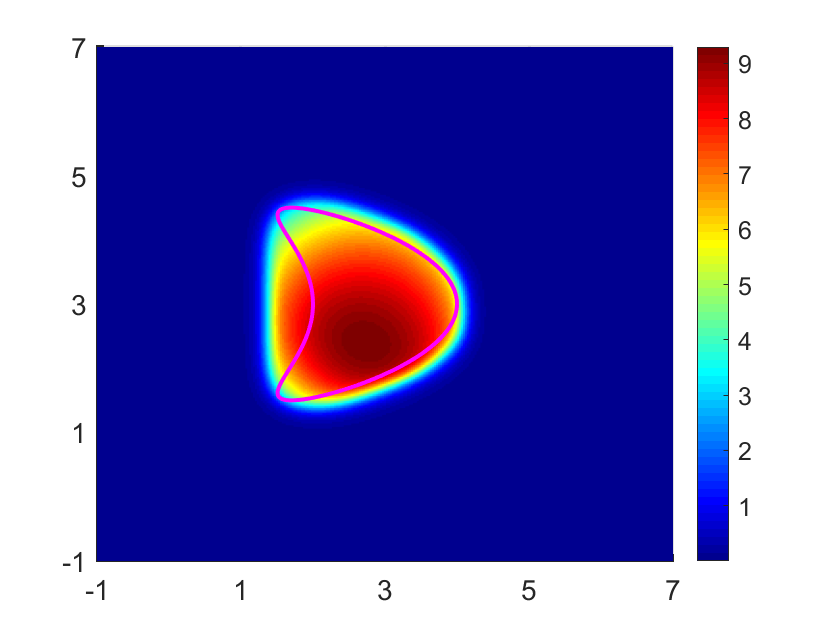}
}
\subfigure[$T=7$]{
\includegraphics[scale=0.18]{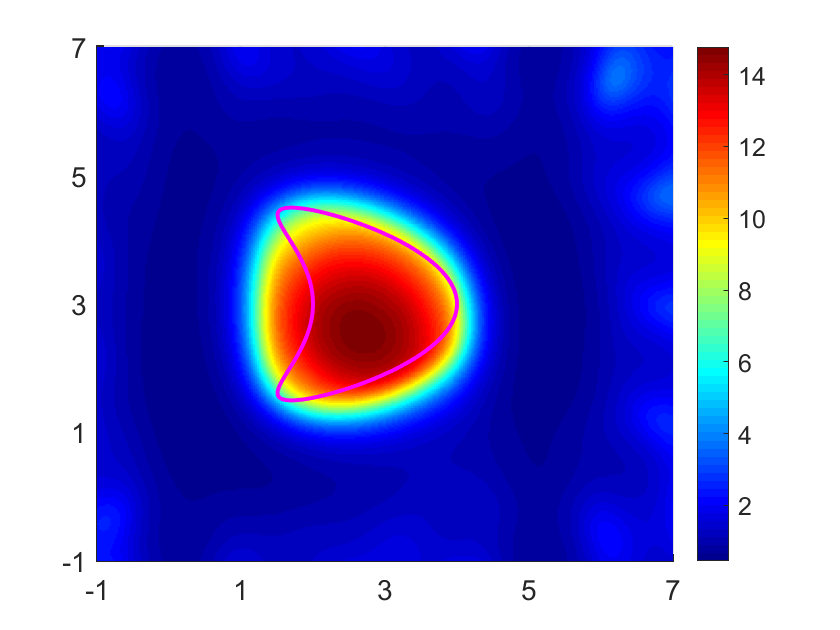}
}
\caption{Reconstructions from $M=8$ observation directions for a kite-shaped support. We choose $S=3(x_1^2+x_2^2) {(t+1)}$ and different \rot{inverse} Fourier transform windows of the form $(0, T)$.} \label{fig:T}
\end{figure}

Next
we consider a source with two disconnected components: one component is kite-shaped and the other one is elliptic. We choose different source functions in Figure \ref{fig:tdir}.
It is shown that the two components are both precisely recovered using $8$ observation directions. It is worth mentioning that the \rot{inverse} Fourier transform time widow $(0, T)$ should not be too big in this case. If $T$ is increasing from 1 to 5, the images will be distorted; see Figure \ref{fig:Tdir}. This is due to the reason that the wave-fields radiated from the two components and received by the sensors cannot be split.  The Figure \ref{fig:Tdir} (b) can be improved if we increase the number of frequencies. However, if $T$ is larger than the distance between the two components (see Figure \ref{fig:Tdir} (c)), the two components cannot be well separated. Instead, the convex hull of the union of these two components  can be recovered.
\begin{figure}%[H]
\centering
\subfigure[$S=3(x_1+3) {(t+1)}$]{
\includegraphics[scale=0.18]{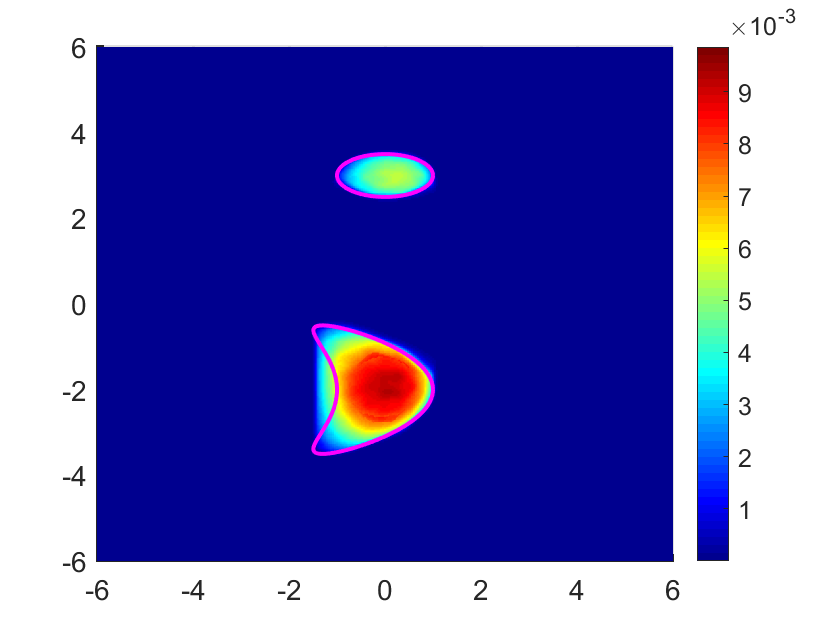}
}
\subfigure[$S=3(x_2+3) {(t+1)}$ ]{
\includegraphics[scale=0.18]{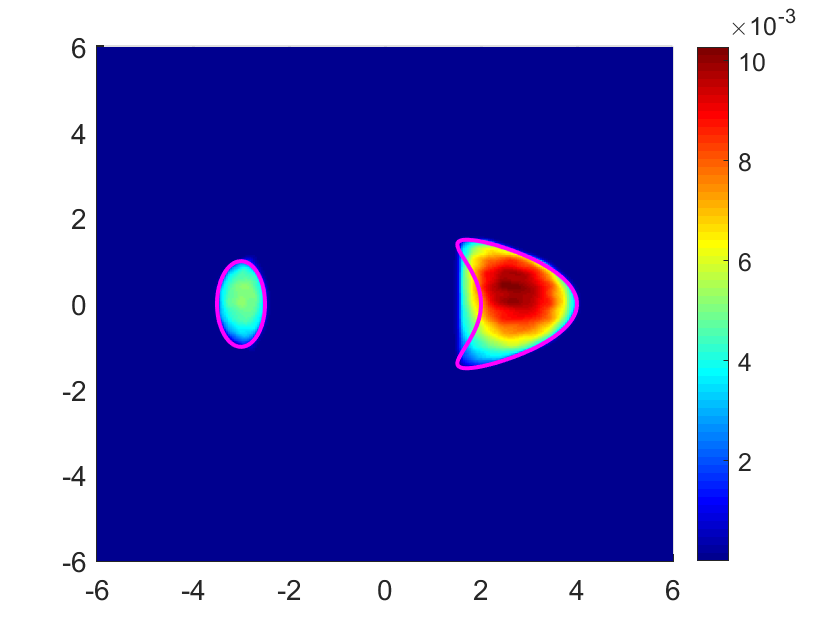}
}
\subfigure[$S=3(x_1^2+x_2^2) {(t+1)}$]{
\includegraphics[scale=0.18]{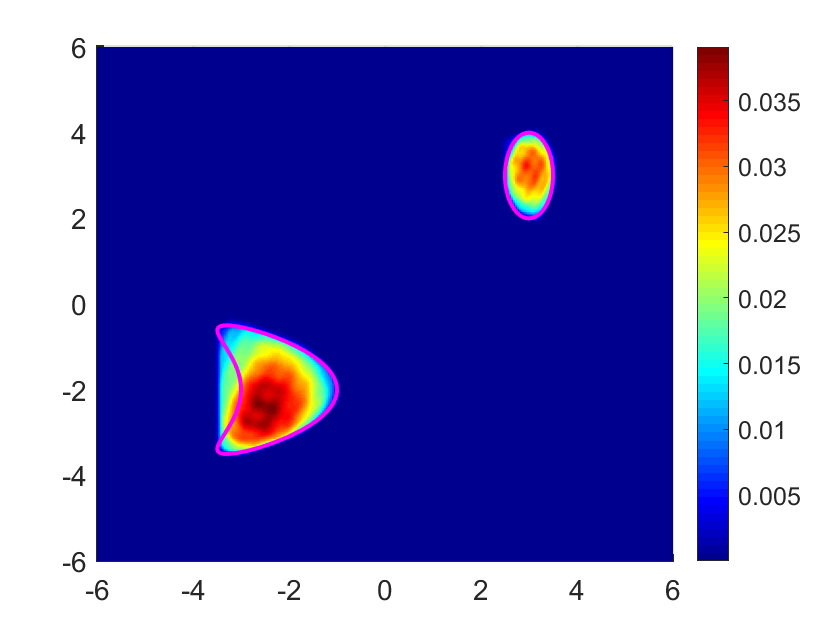}
}
\caption{Reconstructions of the shape of a source with two components from $8$  observation directions. The \rot{inverse} Fourier time window is $(0, T)$ with $T=0.1$. } \label{fig:tdir}
\end{figure}

\begin{figure}%[H]
\centering
\subfigure[$T=1$]{
\includegraphics[scale=0.18]{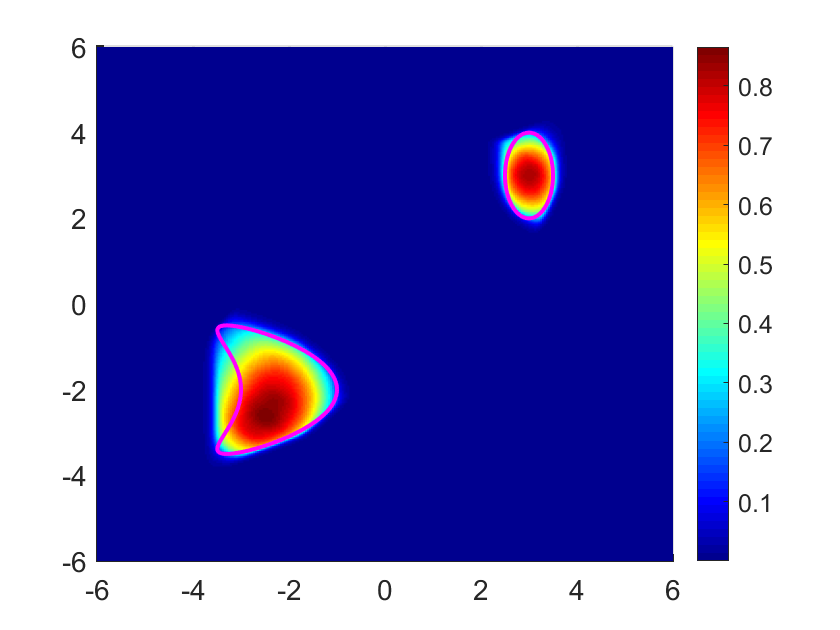}
}
\subfigure[$T=3$ ]{
\includegraphics[scale=0.18]{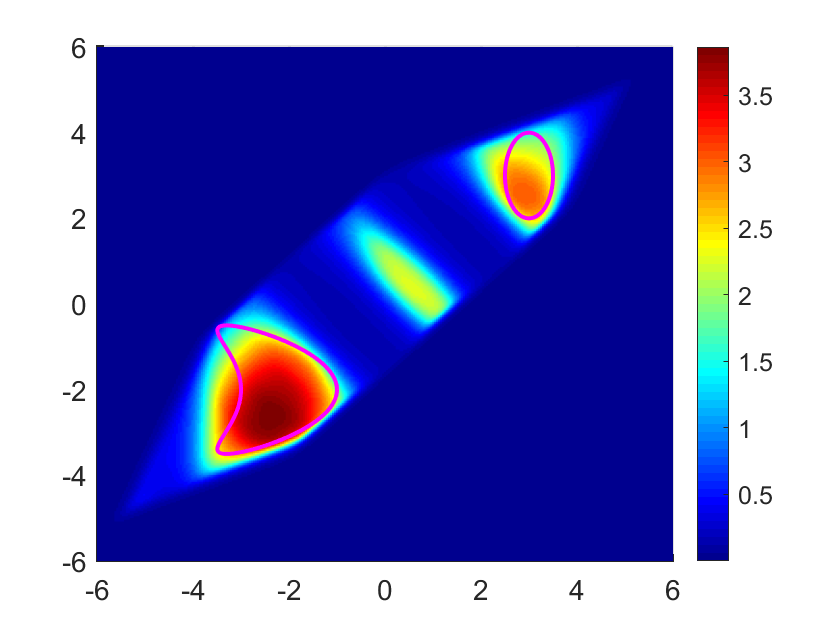}
}
\subfigure[$T=5$]{
\includegraphics[scale=0.18]{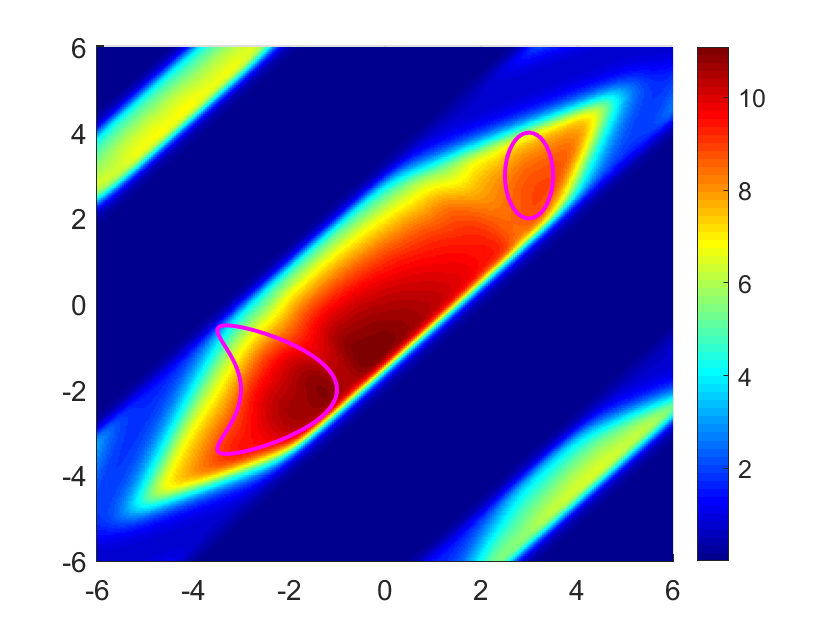}
}
\caption{Reconstructions of two disjoint components from $8$  observation directions with different  time windows $(0, T)$. The source function is $S=3(x_1^2+x_2^2) {(t+1)}$.} \label{fig:Tdir}
\end{figure}

%\subsection{The sensitivity of noise}

\begin{figure}[H]
\centering
\subfigure[$\delta=0$]{
\includegraphics[scale=0.19]{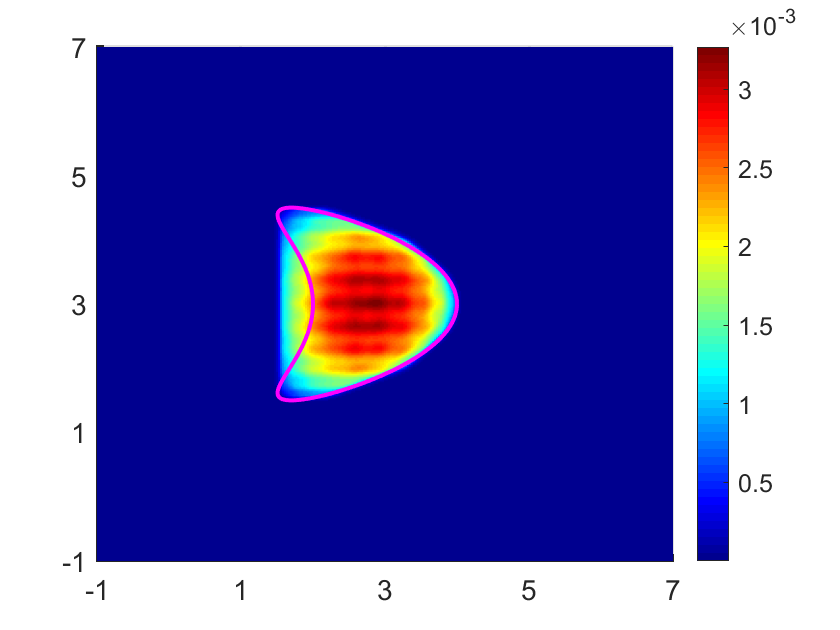}
}
\subfigure[$\delta=2\% $ ]{
\includegraphics[scale=0.19]{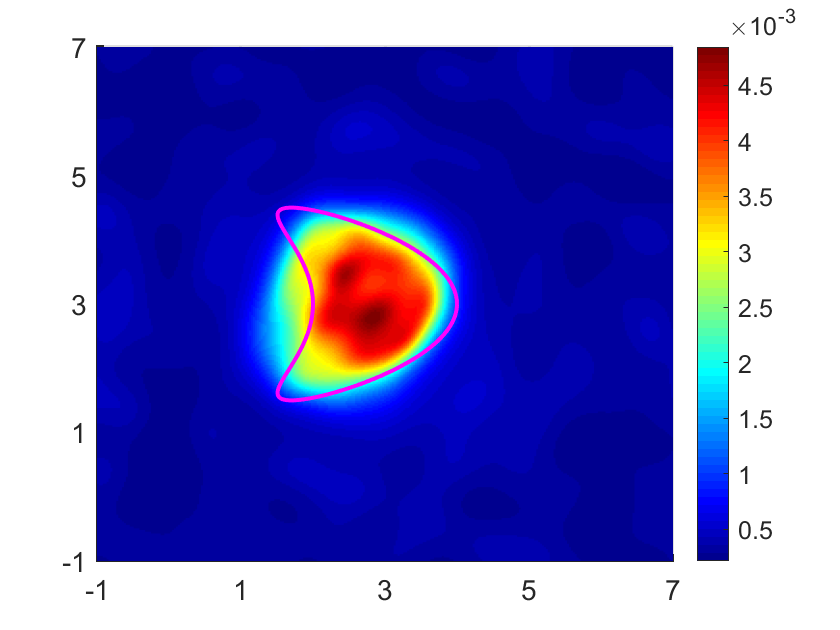}
}

\subfigure[$\delta=5\% $ ]{
\includegraphics[scale=0.19]{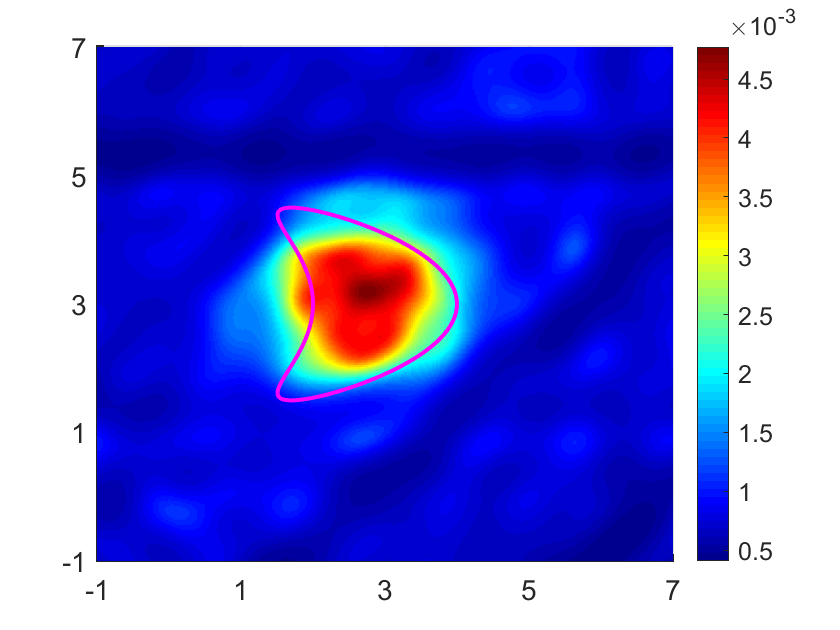}
}
\subfigure[$\delta=10\% $]{
\includegraphics[scale=0.19]{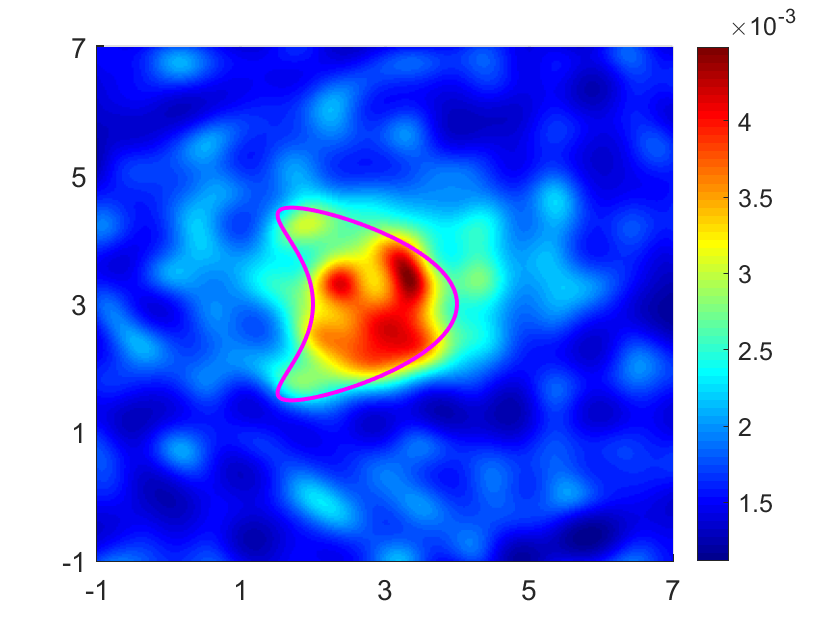}
}
\caption{Reconstructions  of a kite-shaped  source from $8$  observation directions with $S=3 {(t+1)}$ at different noise levels $\delta$.} \label{fig:noise}
\end{figure}

To test the sensitivity of the algorithm with respect to measurement noise, we pollute the far-field data matrix  by
%\ben
$F^{(\hat x)}_\delta:=F^{(\hat x)}+\delta\, \|F^{(\hat x)}\|_2\,\mathcal{M}$,
%\enn
where $\delta$ is the noise level and $\mathcal{M}\in \R^{N\times N}$ is a uniformly distributed random -matrix  with the random variable ranging from $-1$ to $1$. \rot{We present the reconstruction in the noise-free case in  Figure \ref{fig:noise} (a). }The resulting reconstructions are shown in Figure \ref{fig:noise} \rot{ (b), (c) and (d)} at three noise levels. The images are clearly getting distorted at higher noise levels, but the location of the source can still be well-captured.

\subsection{Numerical implements with near-field measurements in $\R^3$} \label{near-3d}
%In this subsection, we present numerical reconstructions of the source support $D$ from the multi-frequency near-field measurements $\{w(x_j,k):x_j \in \pa B_R, k\in(k_{\min}, k_{\max}), j=1,2,...,     M\}$ at sparse observation points. In  view of Corollary \ref{indicator-near}, the smallest annular domain containing the source support  and centered at $x$ can be reconstructed with the indicator function $\widetilde{W}^{(x)}(y)$ in (\ref{near-indicator}). The source support $D$ can be  imaged by plotting the truncated indicator
% \be \label{near-indicator-m}
%\widetilde{W}(y):=\left[\sum_{j=1}^M \sum_{n=1}^N \frac{| \tilde{\phi}^{(x_j)}_{y} \cdot \tilde{\psi}_n^{(x_j)} |^2}{ |\tilde \lambda_n^{(x_j)}|}\right]^{-1}, \qquad y\in \R^3.
%\en
In the following numerical examples, the frequency band is also taken as $(0,16\pi/6)$ with $k_{\max}= 16\pi/6, N = 16$ and $\Delta k = \pi/6$.  \rot{Here we clarify that an iso-surface represents the points in three-dimensional space where the indicator function  $\widetilde W(y)$ or $ W(y)$ has a constant value and that an iso-surface level typically refers to a specific value of the indicator function   $\widetilde W(y)$  or $W(y)$ in space.}
In the first example, we use the indicator function $\widetilde{W}^{(x)}(y)$ of (\ref{near-indicator}) to reconstruct the annulus $\widetilde K_D^{(x)}$ \rot{in (\ref{K})} for a cube.
We take the temporal and spatial dependent source functions to be $F(x,t)=(x_1^2+x_2^2+x_3^2+1)(t+1)$ and the support of the source is assumed to be $\rot{\overline D}\times\rot{[}t_{\min}, t_{\max}\rot{]}$. The cube $D$ is  defined by
%\ben
$D=\{(x_1, x_2, x_3): |x_1|\leq0.5, |x_2|\leq0.5, |x_3|\leq0.5\}$ (see Figure \ref{fig:near-1} (a)).
%\enn
We take $t_{\min}=0$, $t_{\max}=0.1$ and set the measurement point at $(1.5, 0 ,0)$. Then $\widetilde{W}^{(x)}(y)$ is plotted over the searching domain $[-1.5, 1.5]^3$ in Figure \ref{fig:near-1} (b) and (c). We present a slice of the reconstruction at $y_2=0$ in Figure \ref{fig:near-1} (b), from which we conclude that the cross of the plane $y_2=0$ with the smallest annulus containing the square (in pink) and centered at $x=(1.5,0,0)$ is nicely reconstructed. Figure \ref{fig:near-1} (c) illustrates an iso-surface of the reconstruction at the iso-level $\rot{1}\times10^{-3}$. The iso-surfaces perfectly enclose the cube-shaped support.
\begin{figure}%[H]
\centering
\subfigure[Geometry of a cubic support]{
\includegraphics[scale=0.18]{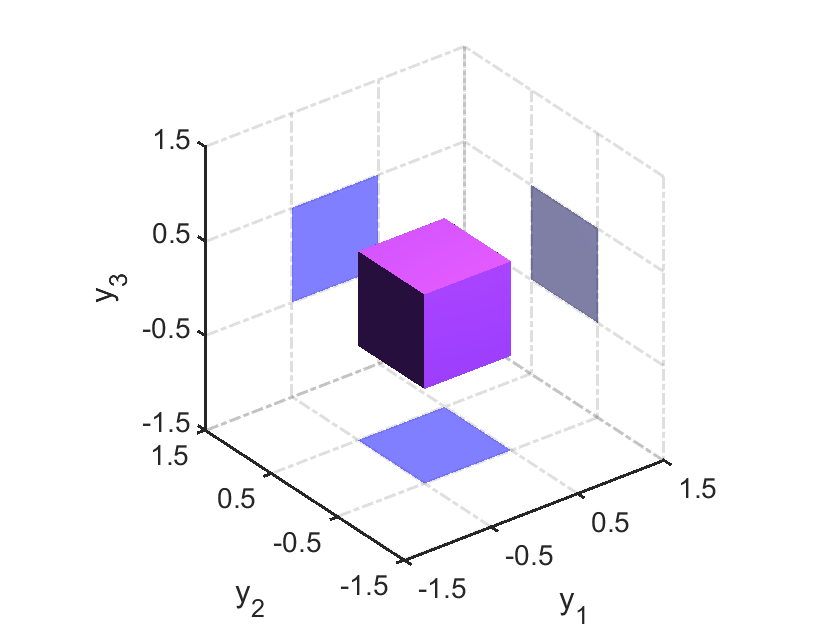}
}
\subfigure[A slice  at $y_2=0$ ]{
\includegraphics[scale=0.18]{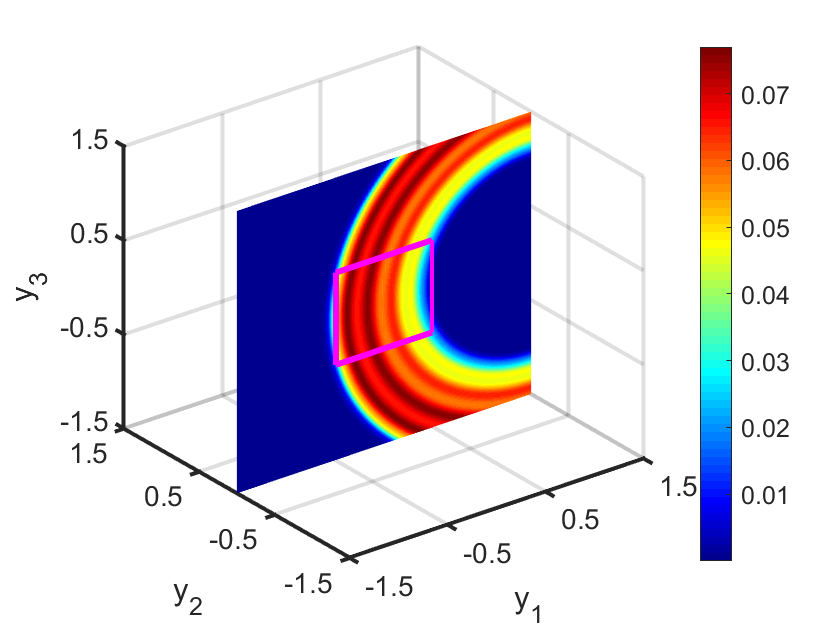}
}
\subfigure[Iso-surface level $=\rot{1}\times10^{-3}$]{
\includegraphics[scale=0.18]{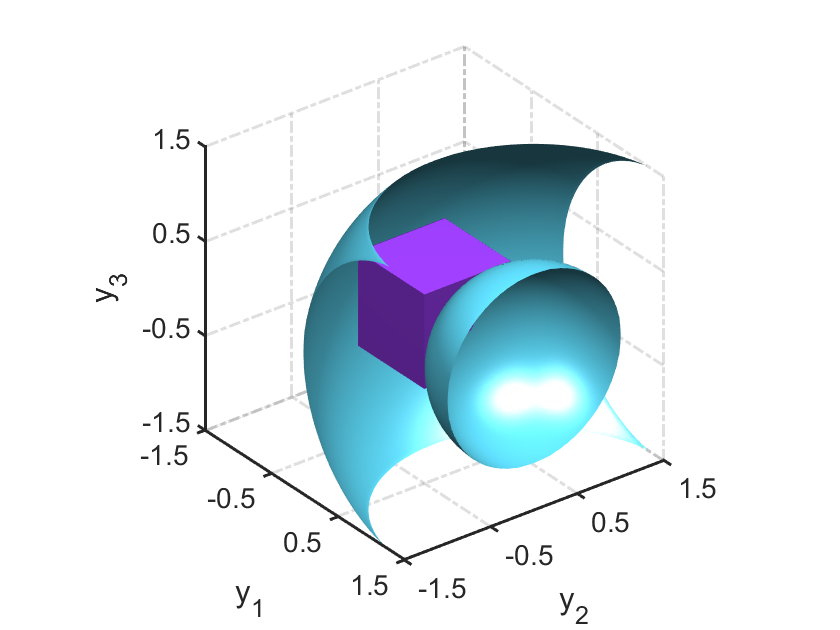}
}
\caption{Reconstructions of a cube from the data measured at one observation point $(1.5, 0, 0)$.% The Fourier transform window is $(t_{\min}, t_{\max})=(0,0.1)$.
}\label{fig:near-1}
\end{figure}
Next we continue the above test with multiple observation points. A visualization of the indicator function %$\widetilde{W}(y)$ in (\ref{near-indicator-m})
 is shown in Figure \ref{fig:near-2} with six observation points $\{ (1.5,0,0), (-1.5,0,0), (0,1.5,0), (0,-1.5,0), (0,0,1.5), (0,0,-1.5)\}$. Figure \ref{fig:near-2} (a) presents an iso-surface of the reconstruction at  the iso-level $\rot{5\times10^{-3}}$ and the projections of the images onto the $oy_1 y_2$, $oy_1 y_3$ and $oy_2 y_3$ planes. It is clearly shown that projections are all squares $[-0.5,0.5]^2$,  justifying the accuracy of our 3D reconstructions. Figure \ref{fig:near-2} (b) and (c) illustrate  slices of the reconstructions at the planes $y_2, \,y_3=0$ and $y_1,\,y_2=0$ using the data at six observation points. These slices also confirm the accuracy of the 3D reconstructions.

\begin{figure}%[H]
\centering
\subfigure[Iso-surface level $=\rot{5\times10^{-3}}$]{
\includegraphics[scale=0.18]{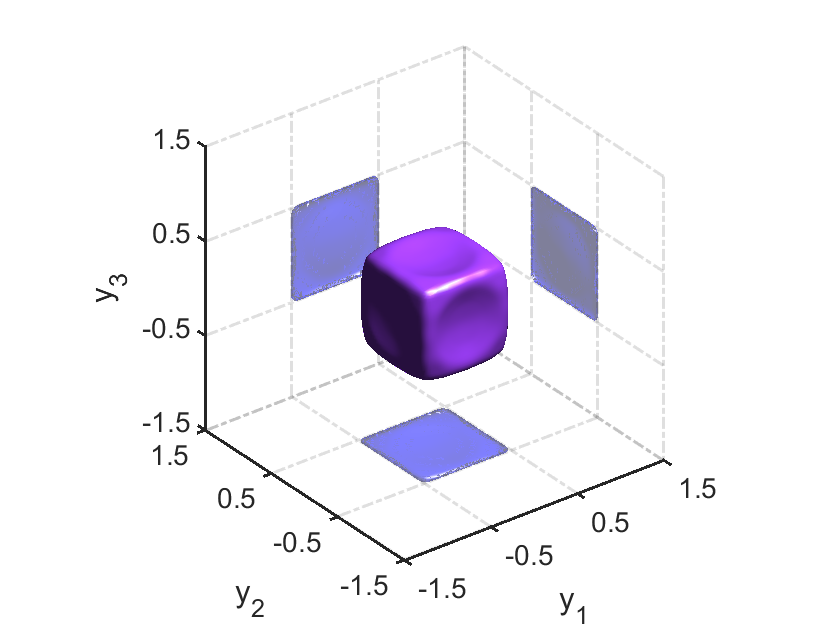}
}
\subfigure[Slices  at $y_2,y_3=0$ ]{
\includegraphics[scale=0.18]{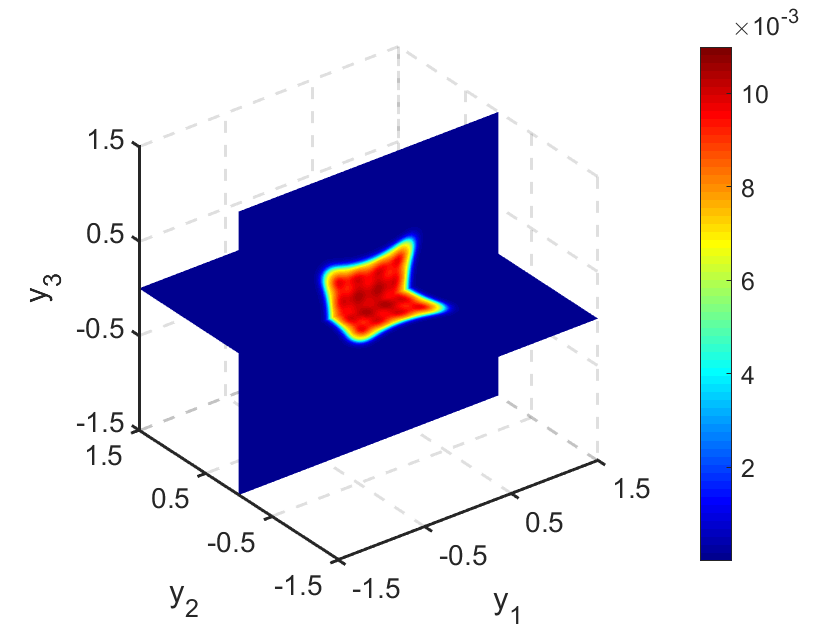}
}
\subfigure[Slices  at $y_1,y_2=0$]{
\includegraphics[scale=0.18]{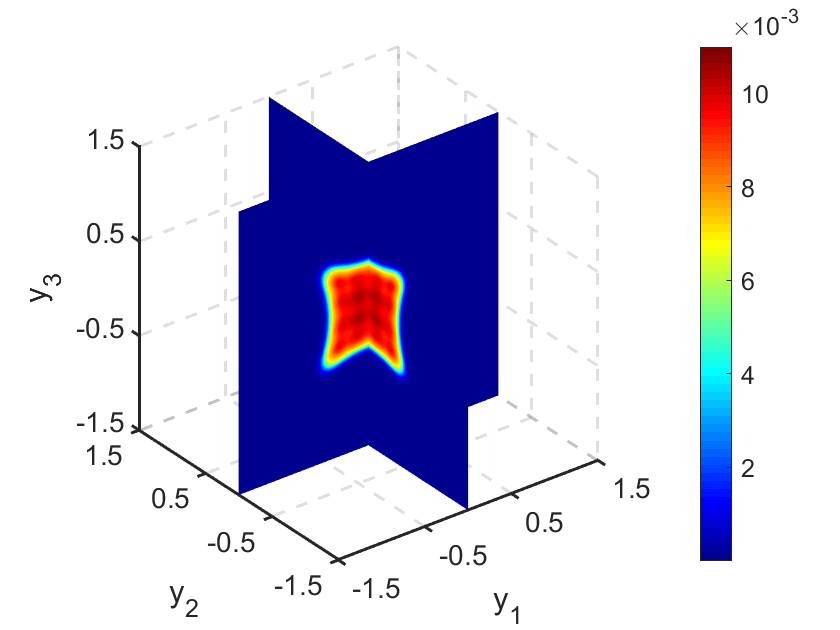}
}
\caption{Reconstructions of a cube from six observation points. %The Fourier transform window is $(0,0.1)$.
 } \label{fig:near-2}
\end{figure}

In Figure \ref{fig:near-3,fig:near-4,fig:near-5}, we show iso-surfaces and slices of the 3D reconstruction of the cubic source with a longer radiating period $(t_{\min}, t_{\max})$. Different $\rot{inverse}$ Fourier transform windows from the data measured at six observation points are used. We choose the radiating period (resp. \rot{inverse} Fourier transform window) as $(0,1)$ in Figure \ref{fig:near-3}, as $(0,3)$ in Figure \ref{fig:near-4} and as $(0,5)$ in Figure \ref{fig:near-5}. It can be observed that, even for a long duration $T=t_{\max}-t_{\min}$, satisfactory inversions for capturing the shape and  location of the source \rot{support $D$} can be achieved by taking different iso-surface levels. We also present some slices of the reconstructions, which all confirm effectiveness of our algorithm.

\begin{figure}%[H]
\centering
\subfigure[Iso-surface level$=\rot{1.3\times10^{-1}}$]{
\includegraphics[scale=0.18]{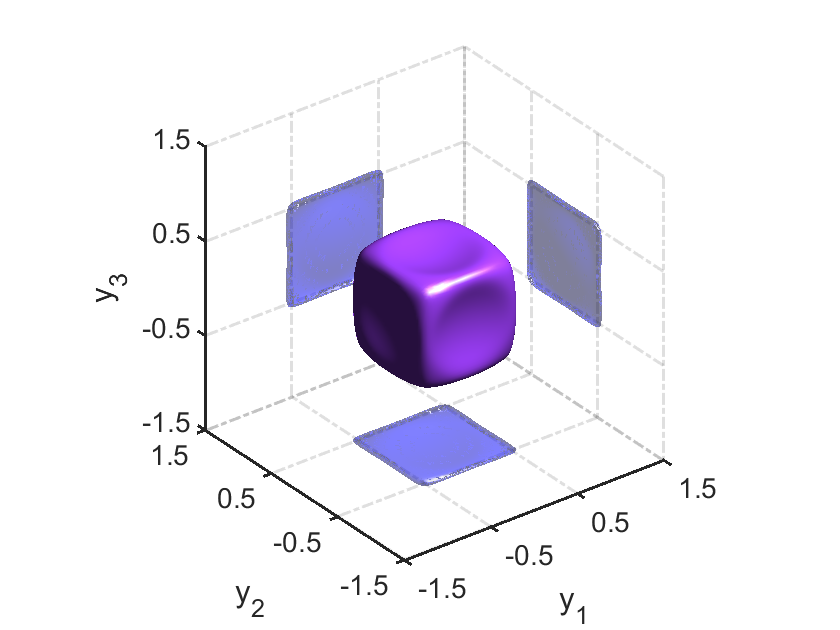}
}
\subfigure[Slices  at $y_2,y_3=0$ ]{
\includegraphics[scale=0.18]{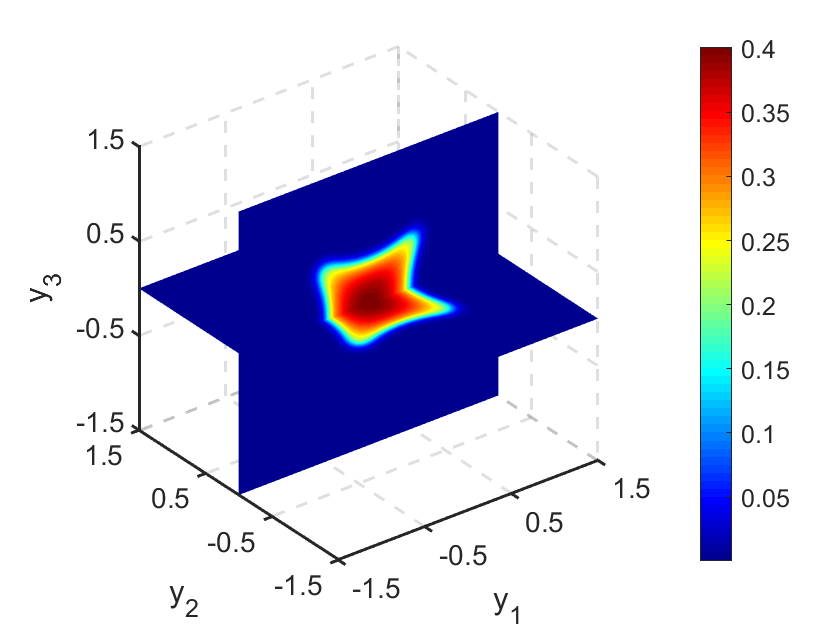}
}
\subfigure[Slices  at $y_1,y_2=0$]{
\includegraphics[scale=0.18]{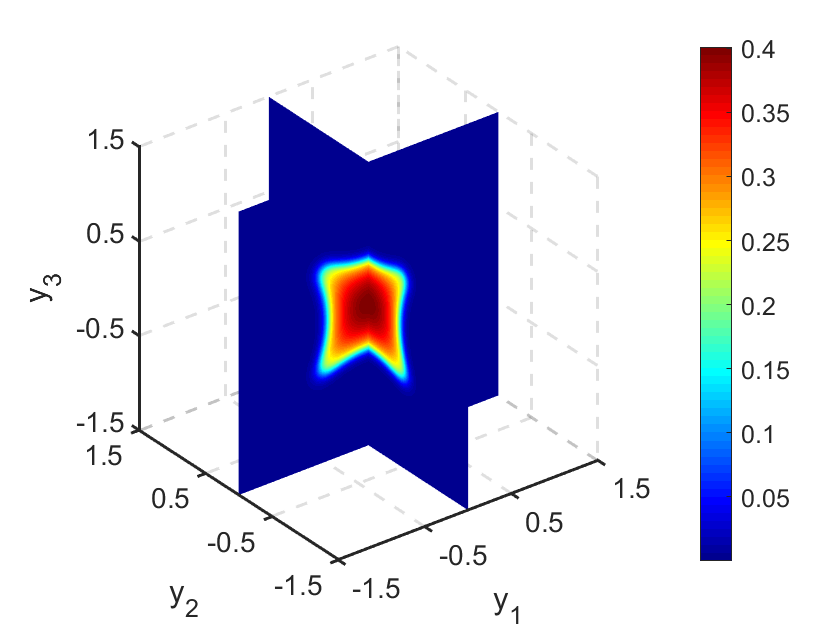}
}
\caption{Reconstructions of a cube from six observation points.
%The Fourier transform window is  $(0,1)$.
}\label{fig:near-3}
\end{figure}

\begin{figure}%[H]
\centering
\subfigure[Iso-surface level $=\rot{9\times 10^{-1}}$]{
\includegraphics[scale=0.18]{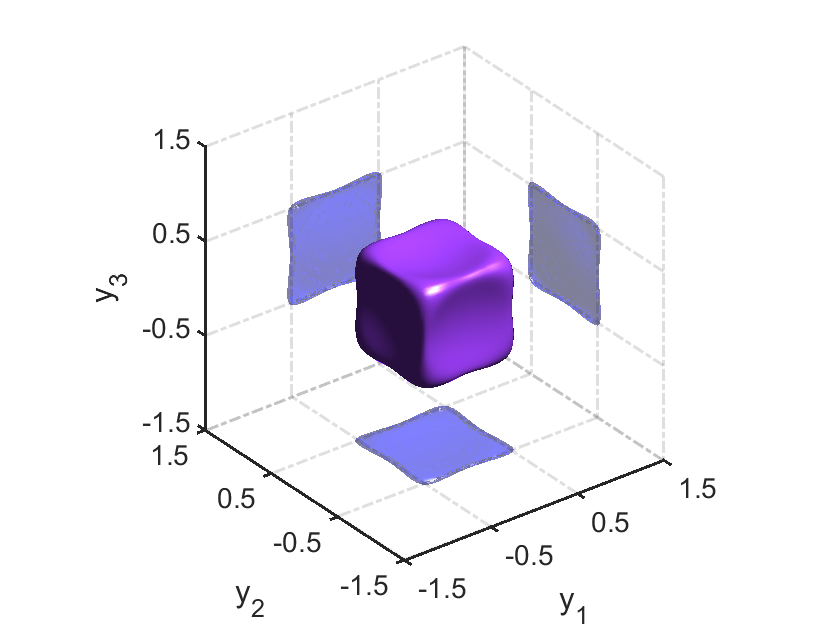}
}
\subfigure[Slices  at $y_2,y_3=0$ ]{
\includegraphics[scale=0.18]{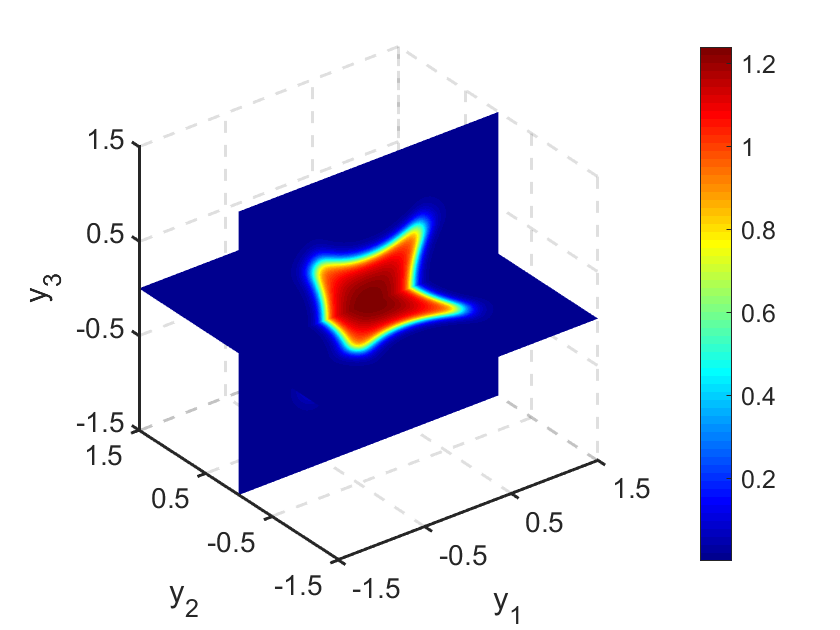}
}
\subfigure[Slices  at $y_1,y_2=0$]{
\includegraphics[scale=0.18]{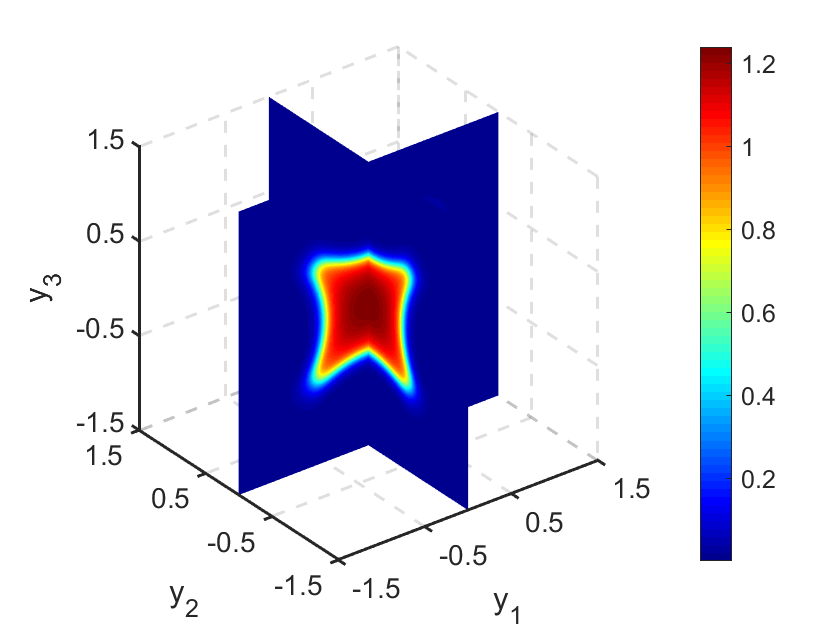}
}
\caption{Reconstructions of a cube from six observation points. %The Fourier transform window is  $(0,3)$.
} \label{fig:near-4}
\end{figure}

\begin{figure}%[H]
\centering
\subfigure[Iso-surface level $=\rot{1.6}$]{
\includegraphics[scale=0.18]{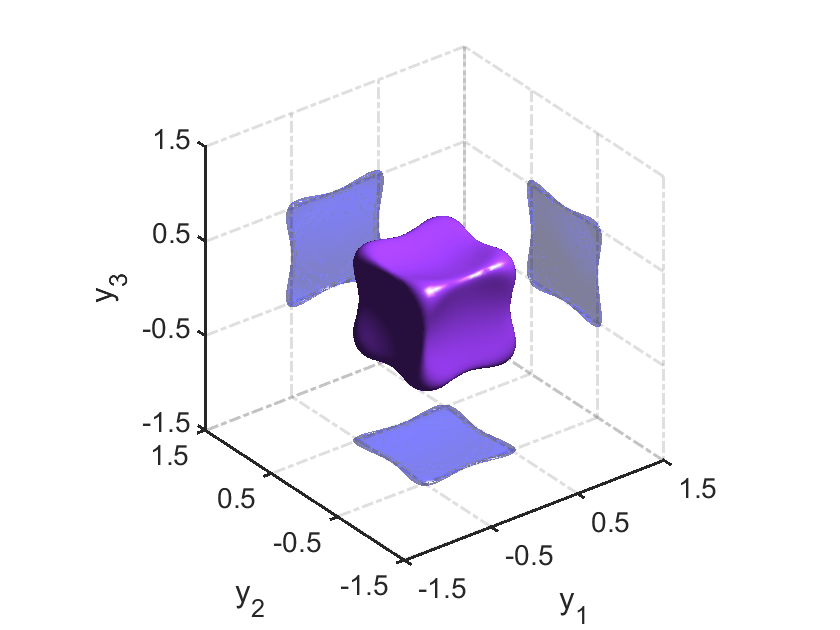}
}
\subfigure[Slices  at $y_2,y_3=0$ ]{
\includegraphics[scale=0.18]{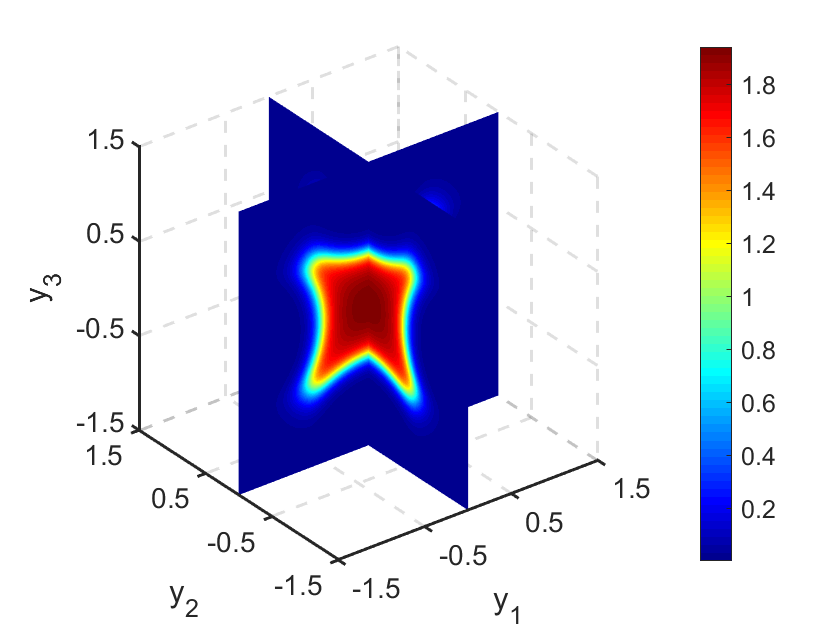}
}
\subfigure[Slices  at $y_1,y_2=0$]{
\includegraphics[scale=0.18]{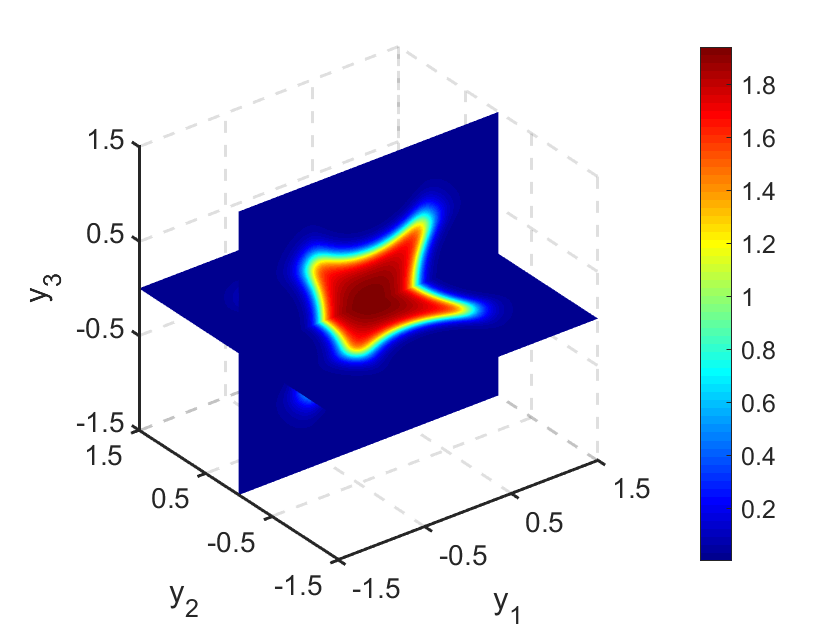}
}
\caption{Reconstructions of a cube with six observation points. %The Fourier transform window is $(0,5)$.
} \label{fig:near-5}
\end{figure}

\subsection {Comparison between far-field and near-field measurements  in $\R^3$}
\rot{We continue to consider the example in Figure \ref{near-3d}, where the exact geometry of the source support is cubic. Theoretically, the geometry can be exactly recovered using three properly-chosen observation directions from the far-field measurements  and approximately recovered using three properly-chosen observation points in $\R^3$. In the numerical examples below, we choose $t_{\min}=0$ and $t_{\max}=0.1$. Using three observation directions $\hat x=\{(1,0,0), (0,1,0),(0,0,1)\}$ in Figure \ref{fig:aa}(a) from the far-field measurements, we can see that both the location and shape of the cubic source support is perfectly reconstructed just as our theoretical results predict.  To clearly illustrate the reconstruction, we also plot the projections of the
images onto the $o y_2 y_3$, $o y_1 y_3$ and $o y_1 y_2$ planes. From the 2D visualizations one sees that the projections are all squares $[-0.5,0.5]^2$. Figure \ref{fig:far-a} shows slices of the reconstruction at the planes $y_1 = 0$, $y_2 = 0$ and $y_3 = 0$.
For comparison we also demonstrate the boundary of the source support
 slice with the pink solid line. These slices also confirm the accuracy of our algorithm. While using three observation points $x=\{(1.5,0,0), (0,1.5,0), (0,0,1.5)\}$ from the near-field measurement in Figure \ref{fig:aa} (b),  we can see that only  the location is captured, but the shape, which is not accurately reconstructed, is even severely disturbed. We also present slices of the reconstruction in Figure \ref{fig:near-a}. The shape of the source support is well-constructed, because the image is formed by the intersection of annulus (just as Figure \ref{fig:near-1} (c)) centered at the three observation points respectively.  However, in the far-field case with one observation point, the source support is located between two planes that are perpendicular to the observation direction.}

\begin{figure}%[H]
\centering
\subfigure[Iso-surface level= $2\times 10^{-4}$]{
\includegraphics[scale=0.2]{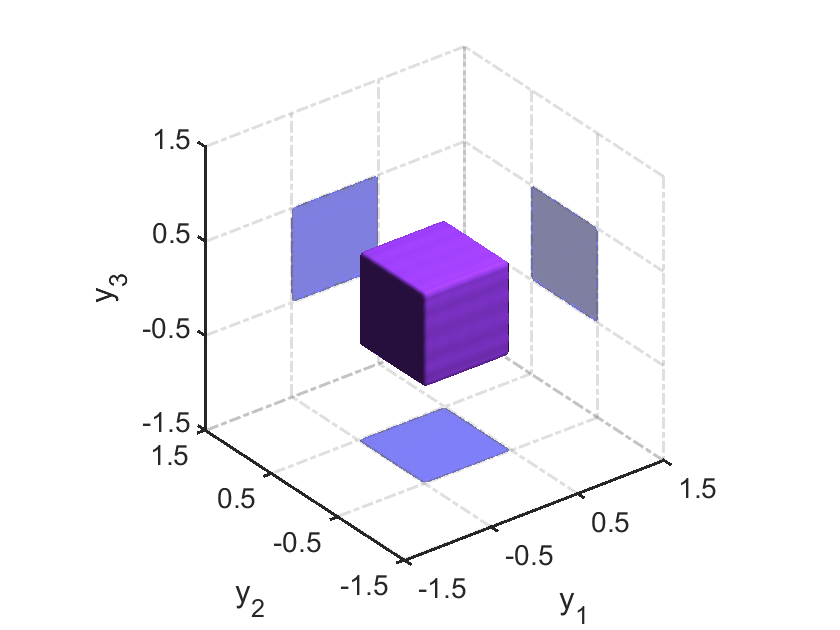}
}  % Iso-surface level= $2\times 10^{-4}$
\subfigure[ Iso-surface level= $1.4\times 10^{-2}$]{
\includegraphics[scale=0.2]{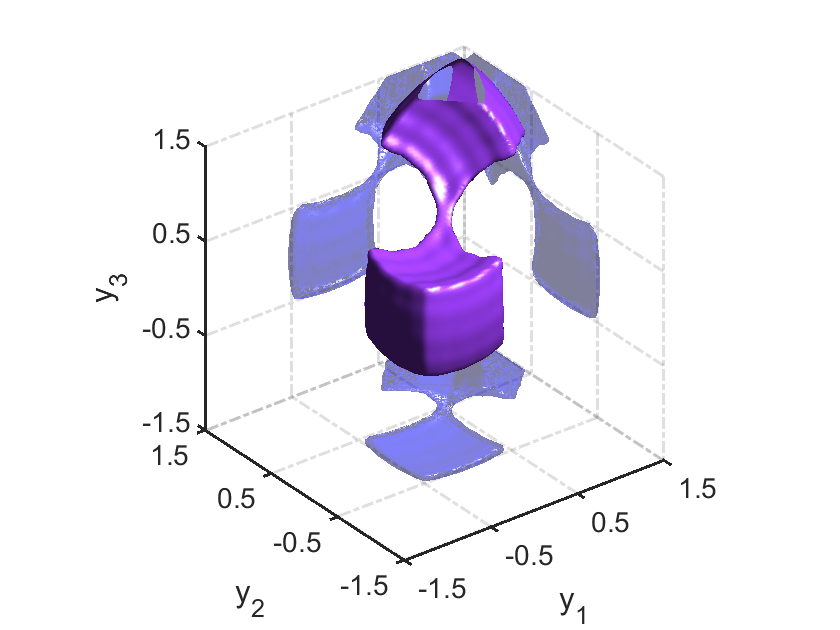}
}  % Iso-surface level= $1.4\times 10^{-2}$
\caption{Reconstructions of a cube from three observation directions with  the far-field measurements in (a) and from three observation points with the near field measurement in (b). %The Fourier transform window is $(0,5)$.
} \label{fig:aa}
\end{figure}

\begin{figure}%[H]
\centering
\subfigure[A slices at $y_1=0$]{
\includegraphics[scale=0.18]{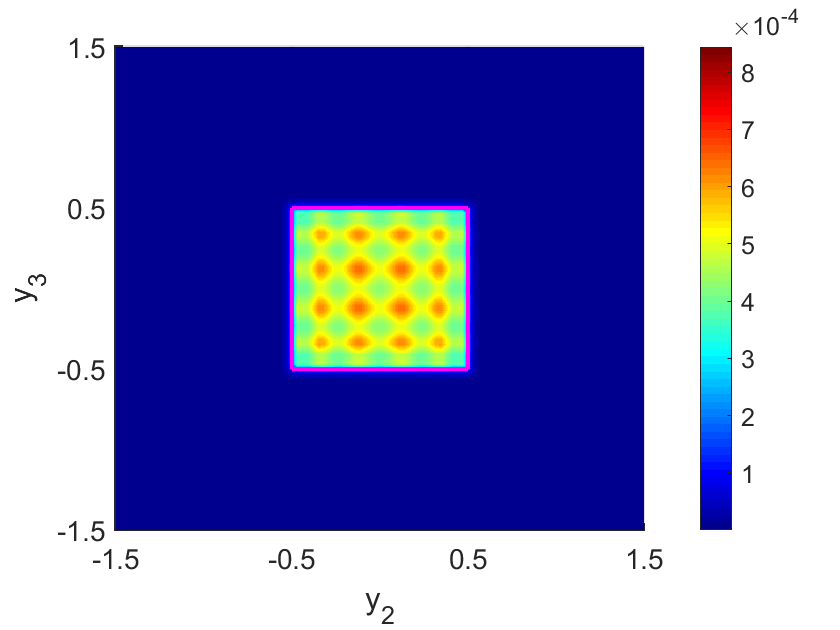}
}
\subfigure[A slices at $y_2=0$ ]{
\includegraphics[scale=0.18]{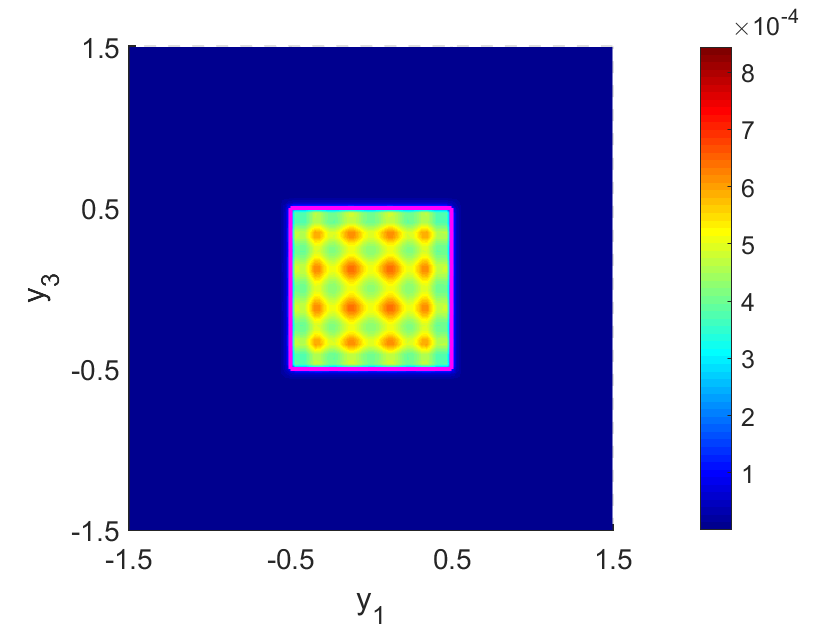}
}
\subfigure[A slices at $y_3=0$]{
\includegraphics[scale=0.18]{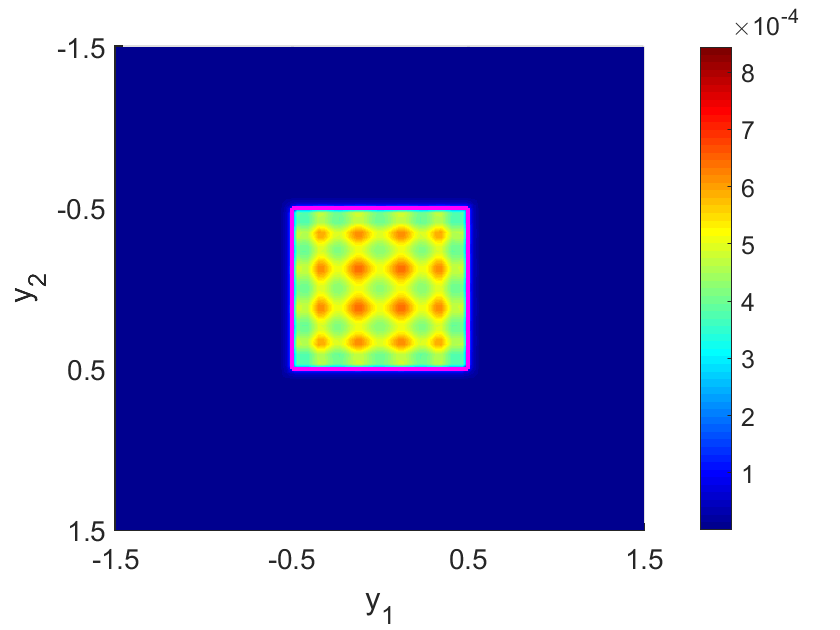}
}
\caption{Reconstructions of a cube with three observation directions for far-field case. %The Fourier transform window is $(0,5)$.
} \label{fig:far-a}
\end{figure}

\begin{figure}%[H]
\centering
\subfigure[A slices at $y_1=0$]{
\includegraphics[scale=0.18]{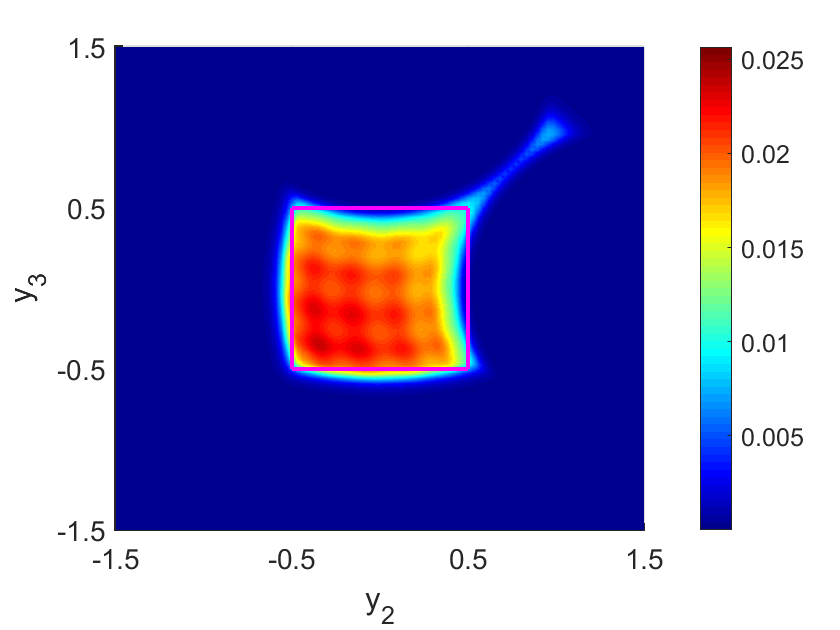}
}
\subfigure[A slices at $y_2=0$ ]{
\includegraphics[scale=0.18]{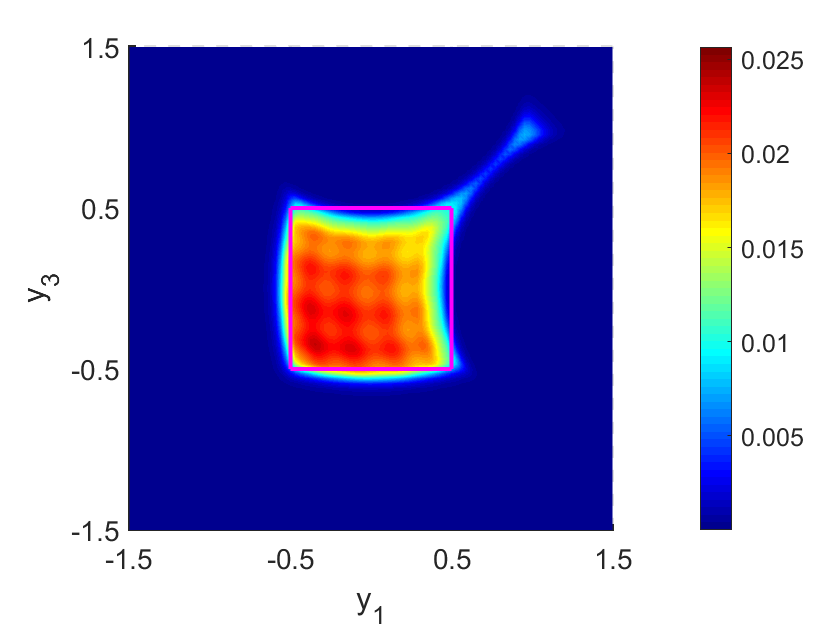}
}
\subfigure[A slices at $y_3=0$]{
\includegraphics[scale=0.18]{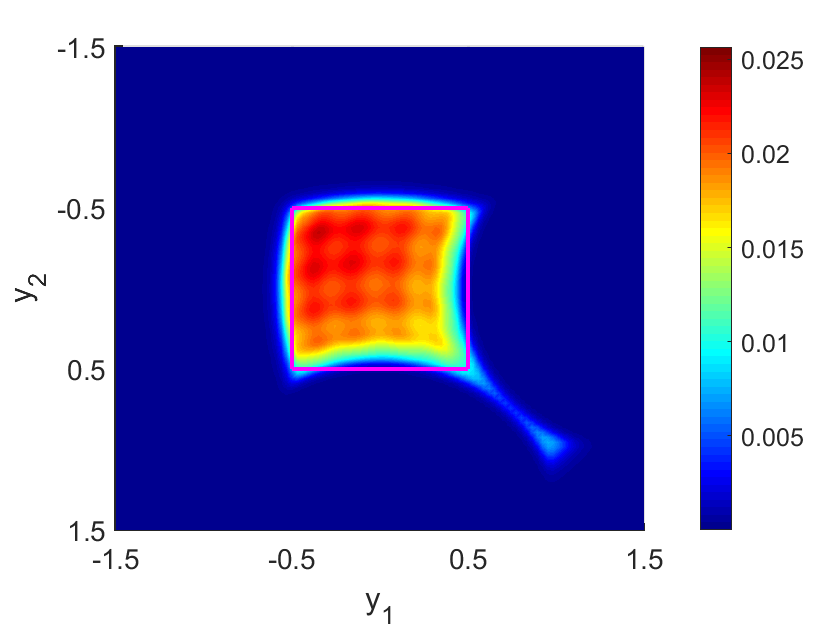}
}
\caption{Reconstructions of a cube with three observation points for near field case. %The Fourier transform window is $(0,5)$.
} \label{fig:near-a}
\end{figure}

\iffalse
\begin{figure}%[H]
\centering
\subfigure[Iso-surface level $=8.5\times 10^{-3}$]{
\includegraphics[scale=0.14]{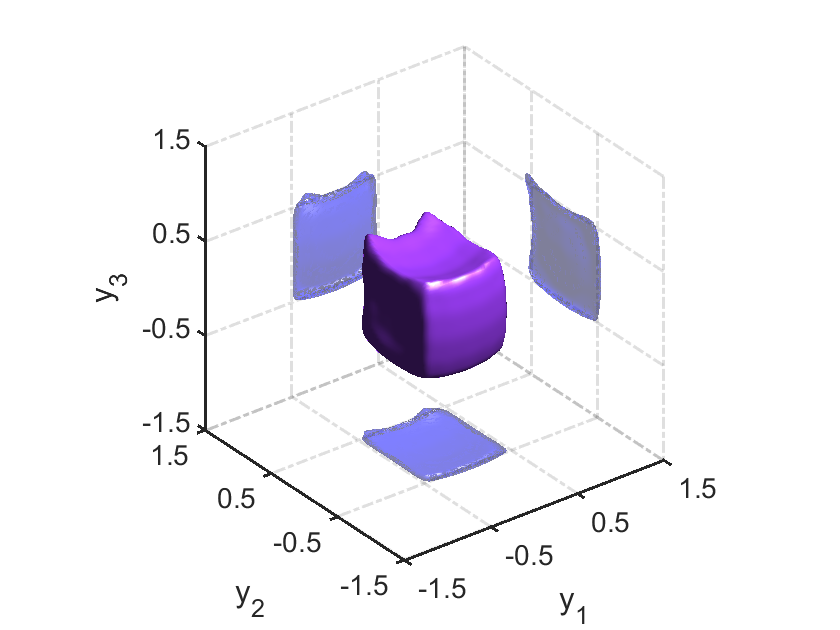}
}
\subfigure[A slices at $y_1=0$]{
\includegraphics[scale=0.14]{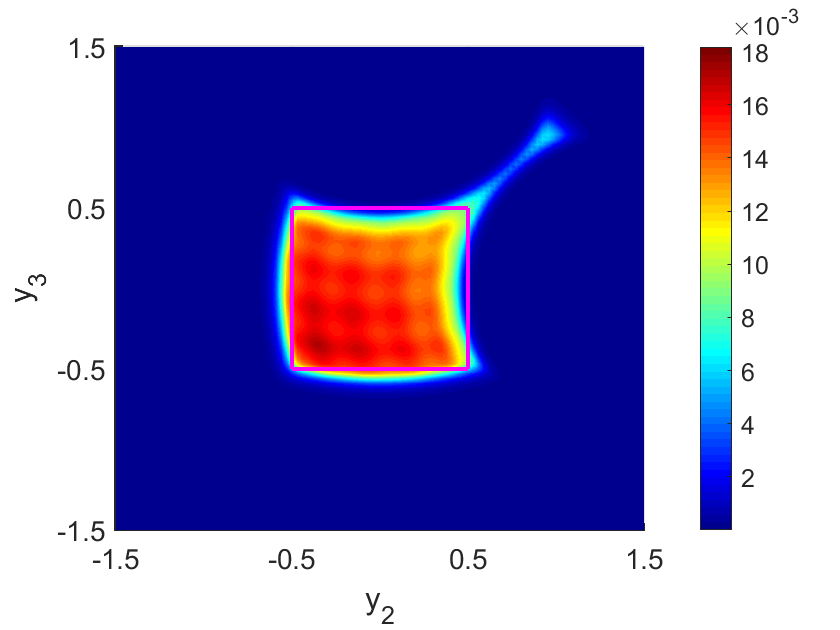}
}
\subfigure[A slices at $y_2=0$ ]{
\includegraphics[scale=0.14]{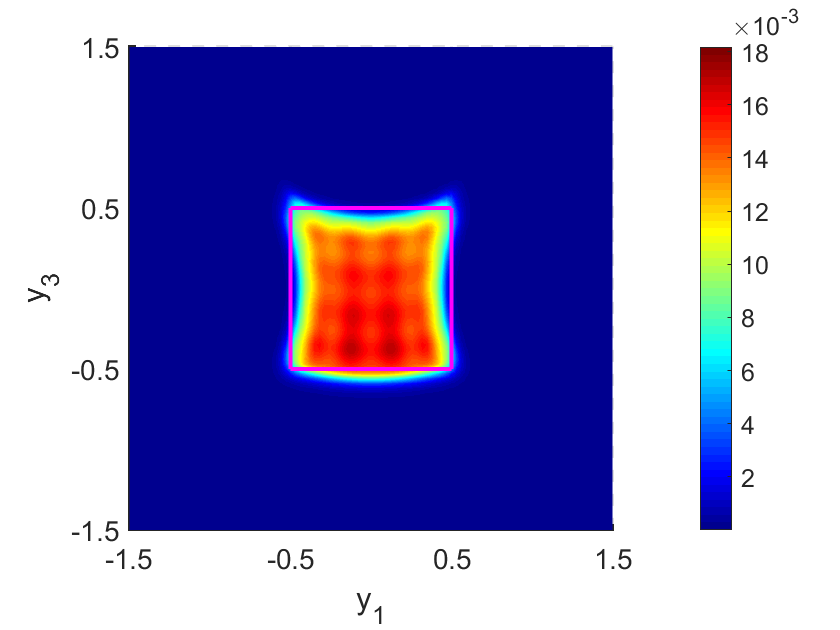}
}
\subfigure[A slices at $y_3=0$]{
\includegraphics[scale=0.14]{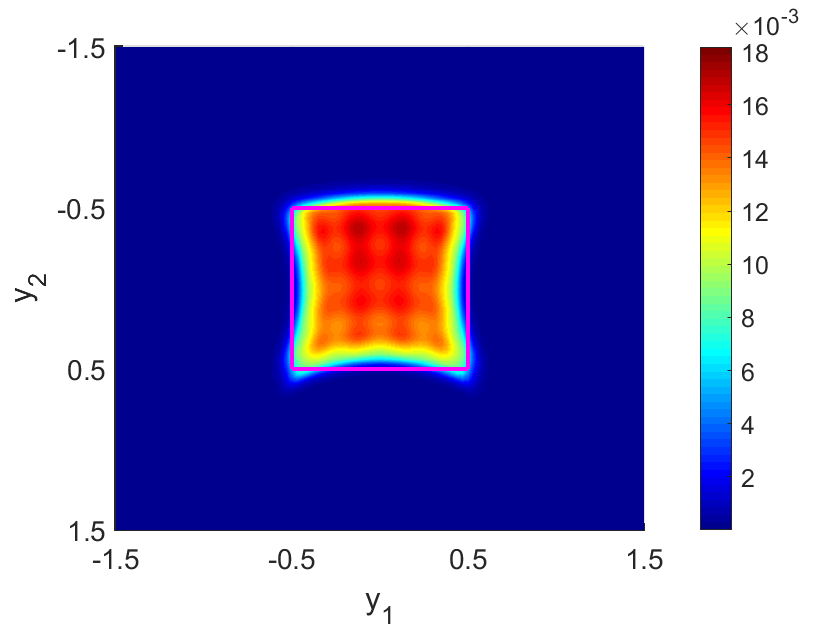}
}
\caption{Reconstructions of a cube with four observation points for near field case. %The Fourier transform window is $(0,5)$.
} \label{fig:far-a-4}
\end{figure}
\fi

\section*{Acknowledgements}
G. Hu acknowledges the hospitality of the Institute for Applied and Numerical
Mathematics, Karlsruhe Institute of Technology and the support of Alexander von
Humboldt-Stiftung. Special thanks are given to R. Griesmaier for stimulating discussions.
The work of G. Hu is partially supported by the National Natural Science Foundation of China (No. 12071236) and the Fundamental Research Funds for Central Universities in China (No. 63213025).

%\end{thebibliography}

\end{document}